\documentclass[onefignum,onetabnum]{siamonline250211}

\usepackage[utf8]{inputenc}
\usepackage{bm}
\usepackage[english]{babel} 
\usepackage{amsmath,amssymb,mathrsfs,mathtools}
\usepackage{subfigure}
\usepackage{tensor}
\usepackage{physics}
\usepackage[nice]{nicefrac}
\usepackage{amsfonts}
\usepackage{graphicx}
\usepackage{algorithm}
\usepackage{algpseudocodex}
\usepackage{amsopn}

\algtext*{EndProcedure}

\newtheorem{remark}{Remark}

\newtheorem{condition}{Condition}

\usepackage{cleveref}
\crefname{align}{Eq.}{Eqs.}
\crefname{equation}{Eq.}{Eqs.}
\crefname{figure}{Fig.}{Figs.}
\crefname{table}{Table}{Tables}
\crefname{theorem}{Theorem}{Theorems}
\crefname{definition}{Definition}{Definitions}
\crefname{condition}{Condition}{Conditions}
\crefname{lemma}{Lemma}{Lemmas}
\crefname{remark}{Remark}{Remarks}
\crefname{assumption}{Assumption}{Assumptions}
\crefname{proof}{Proof}{Proofs}
\crefname{problem}{Problem}{Problems}
\crefname{proposition}{Proposition}{Propositions}
\crefname{section}{Section}{Sections}
\crefname{subsection}{Section}{Sections}
\crefname{subsubsection}{Section}{Sections}

\ifpdf
  \DeclareGraphicsExtensions{.eps,.pdf,.png,.jpg}
\else
  \DeclareGraphicsExtensions{.eps}
\fi

\usepackage{enumitem}
\setlist[enumerate]{leftmargin=.5in}
\setlist[itemize]{leftmargin=.5in}

\newcommand{\R}{\mathbb{R}}

\newcommand{\diff}{\mathrm{d}}

\newcommand{\quat}{\mathrm{quat}} 
\newcommand{\vecH}{\mathrm{vec}_{\HH}} 
\newcommand{\vecIH}{\mathrm{vec}_{\IH}} 

\newcommand{\HH}{\mathbb{H}}
\newcommand{\IH}{\mathbb{IH}}

\newcommand{\Ree}{\operatorname{Re}}
\newcommand{\Ad}[1]{#1^{\!\dagger}} 
\newcommand{\q}[1]{q^{(#1)}}
\newcommand{\qic}[1]{\overline{q^{(#1)}}}
\newcommand{\p}[1]{p^{(#1)}}

\newcommand{\conj}[1]{\overline{#1}}

\newcommand{\defeq}{\coloneqq}

\newcommand{\admsWA}{\mathcal{A}_{\mathrm{WA}}} 
\newcommand{\admsKS}{\mathcal{A}_{\mathrm{KS}}} 
\newcommand{\inftyWA}{\mathcal{S}_{0}} 

\headers{Global Regularization of the Spatial ER3BP}{Kenshiro Oguri}

\title{Global Regularization of the Spatial Elliptic Restricted Three-body Problem: Quaternionic, Hamiltonian Formulation\thanks{Submitted to the editors on \today.
\funding{This material is based upon work supported by the United States Air Force Office of Scientific Research under award number FA9550-25-1-0237.}}}

\author{Kenshiro Oguri\thanks{School of Aeronautics and Astronautics, Purdue University, West Lafayette, IN, USA (\email{koguri@purdue.edu}).}}

\ifpdf
\hypersetup{
  pdftitle={Global Regularization of the Spatial Elliptic Restricted Three-body Problem: Quaternionic, Hamiltonian Formulation},
  pdfauthor={K. Oguri}
}
\fi

\begin{document}

\maketitle

\begin{abstract}
Any objects in space may encounter singularity due to the inverse-square law of the gravitational force, hindering the analysis and propagation of particle motion in space.
\textit{Regularization} in celestial mechanics removes such gravitational singularities in equations of motion by transforming the time and coordinates.
This paper addresses a class of regularization methods known as \textit{global regularization} for restricted three-body problems (R3BPs), which simultaneously removes the two singularities (e.g., Earth and Moon in cislunar space).
It enables analysis, propagation, and prediction of particle motion throughout the domain of R3BPs, which is in stark contrast to regularizing each singularity individually.
In particular, this paper develops a formulation for global regularization of the \textit{spatial} elliptic R3BP (ER3BP) under external perturbations.
The presented formulation leverages mathematical tools from quaternionic (hyper-complex) algebra and Hamiltonian mechanics.
This approach enables us to
(i) elegantly handle the four-dimensional coordinates arising from the dimension-lifting operation for spatial regularization,
(ii) systematically derive the coordinate-momentum transformation while preserving the canonical structure and incorporating perturbing forces,
and 
(iii) provide geometric insights for analyzing the regularity of the transformation, resulting Hamiltonian, and dynamical flow.
Numerical examples demonstrate the validity of the mathematical results as well as significant numerical benefits of the regularization formulation in perturbed Earth-Moon ER3BP.
\end{abstract}

\begin{keywords}
regularization, three-body problem, Hamiltonian mechanics, celestial mechanics, hyper-complex number, ER3BP
\end{keywords}

\begin{MSCcodes}
70F16, 37N05, 70F15, 70H15, 70H05, 70M20, 37M05
\end{MSCcodes}

\section{Introduction}
\label{sec:intro}

Newton's law of universal gravitation implies that any objects in space may encounter singularity due to the inverse-square law of the gravity.
Mathematically, as the distance between an object and a gravitational body, $r$, approaches zero, the singularity occurs due to the gravitational force being proportional to $1/r^2$.
Near such singularities, accuracy of numerical solutions to ordinary differential equations (ODEs) severely degrades.
If adaptive-step ODE solvers with error control are used, they may not converge with a reasonable computational time or a step size, failing to complete the numerical integration.

Naturally, studies on mitigating singularities from the equations of motion (EOM) have occupied many scholars in the fields of celestial mechanics, astronomy, and astrodynamics.
Early contributions include those by Thiele \cite{Thiele1895}, Levi-Civita \cite{Levi-Civita1904}, Birkhoff  \cite{Birkhoff1915}, Lema\^itre \cite{Lemaitre1955}, Deprit \cite{Deprit1963}, Szebehely \cite{Szebehely1964}, Broucke \cite{Broucke1965}, Kustaanheimo \cite{Kustaanheimo1965a}, Burdet \cite{Burdet1967}, and Stiefel \cite{Stiefel1971}.
In celestial mechanics and astronomy, 
regularization techniques play a crucial role for enabling efficient, reliable numerical solutions to general (i.e., non-restricted) $N$-body simulations, which has a myriad of applications including the study of binary stars and galaxies, among others \cite{Aarseth1971,Bettis1971,Heggie1974,Mikkola1985,Heggie1993,Aarseth1999,Mikkola2008,Roa2016a}.

The theory of regularization has also attracted much attention in the astrodynamics community.
Studies in astrodynamics consider the motion of artificial satellites in gravitational fields and hence mostly focus on \textit{restricted} problems since it is generally safe to assume that the satellite mass is negligible compared to those of celestial bodies.
Regularization has been successfully applied to various problems, such as trajectory design \cite{Jezewski1977,Engels1981,Thorne1997,Quarta2010,Oguri2024b}, orbit control \cite{Hernandez2016,Hernandez2014}, uncertainty propagation \cite{Junkins2003,Masat2024}, and object tracking \cite{LeGrand2023}.
For example, regularization techniques can substantially improve the numerical performance of trajectory design, as it often involves thousands of repeated numerical ODE integrations;
the algorithm fails or struggles to converge when the trajectory is near singularity, which is not rare in multi-body environments due to the relative ease for spacecraft to travel between gravitational bodies \cite{Russell2006,Campagnola2018a,Sidhoum2024a,Sidhoum2024b,Kumagai2025}.

The literature on theory and techniques for regularization is vast and has a rich history.
To motivate the work presented in this paper, let us introduce several key viewpoints about the existing results on this topic, summarized as follows.
\begin{itemize}[leftmargin=*]
\item 
\textit{\textbf{Linearization and regularization}} are different concepts in general.
Dating back in the 18th century, the work by Laplace \cite{Deprit1994} is one of the initial results that \textit{linearize} the Kepler problem without approximation.
However, this approach does not remove the gravitational singularity, i.e., not regularization.
On the other hand, not all regularization techniques \textit{linearize} the EOM.
Nevertheless, some transformations are known to regularize \textit{and} linearize the Kepler problem, such as those by Levi-Civita \cite{Levi-Civita1904}, by Kustaanheimo and Stiefel (KS) \cite{Kustaanheimo1965a,Stiefel1971}, by Burdet and Ferra\'ndiz \cite{Burdet1967,Ferrandiz1987}, and by Deprit, Elipe, and Ferrer \cite{Deprit1994}.

\item 
\textit{\textbf{Local and global regularization}}:
One of the first regularization techniques is due to Thiele in 1895 \cite{Thiele1895}.
Thiele's study regularizes the planar circular restricted three-body problem (p-CR3BP) with the two primaries of the same mass, later extended to arbitrary mass ratios by Burrau in 1906 \cite{Burrau1906}.
Notably, their formulations simultaneously regularize collisions with both primaries in the p-CR3BP, and hence often called \textit{global} regularization, as introduced by Birkhoff \cite{Birkhoff1915} and popularized by Szebehely \cite{Szebehely1967a};
this paper adopts this naming convention.%
\footnote{This should be distinguished from Heggie's \textit{global} regularization of the non-restricted $N$-body problem \cite{Heggie1974}, which introduces relative states for all particle pairs and regularizes all of them.
While such generality can be valuable in dynamical astronomy, it is less natural for the R3BP setting, in which the primary--primary motion is prescribed and only two collision singularities are relevant.}
It is in stark contrast that many well-known regularization methods such as Levi-Civita, KS, and Burdet transformations \cite{Levi-Civita1904,Kustaanheimo1965a,Stiefel1971,Burdet1967,Ferrandiz1987} remove only one singularity at once, hence called \textit{local} regularization;
note, however, that one may still find local regularization useful in R3BPs and beyond (e.g., R4BPs) when one is only concerned with particle motion around one of the primaries (e.g., trajectories around the Moon in the Sun-Earth-Moon system) \cite{Chelnokov2017,Belbruno2023,Rossi2024,Oguri2024b}.
Bringing the attention back to global regularization, researchers have extended the aforementioned early studies by Thiele, Burrau, and Birkhoff with different coordinates, including the generalization to the \textit{elliptic} R3BP (ER3BP) \cite{Birkhoff1915,Lemaitre1955,Deprit1963,Szebehely1964,Szebehely1964a,Broucke1965,Waldvogel1972}, although these results are all limited to planar problems.
The focus of this paper is \textit{global} regularization of the \textit{spatial} ER3BP.

\item 
\textit{\textbf{Planar to spatial problems}}:
A breakthrough occurred in the 1960s.
The KS transformation \cite{Kustaanheimo1965a,Stiefel1971} enabled regularization of \textit{spatial} problems by lifting the three-dimensional (3-D) Kepler problem to a 4-D linear oscillator.
It opened the door to many applications in celestial mechanics, astronomy, and astrodynamics by allowing the study of particle motion around a gravitational body in the 3-D physical space without singularity \cite{Jezewski1977,Engels1981,Thorne1997,Junkins2003,Quarta2010,Hernandez2016,Masat2024}.
While the KS regularization itself is \textit{local} (removing only one singularity), 
soon after its discovery, Waldvogel and Arenstorf separately developed global regularization methods for the spatial ER3BP in 1967 and 1969, respectively \cite{Waldvogel1967,Arenstorf1969}, extending the Birkhoff's coordinates to the spatial problem.
This paper calls it \textit{Waldvogel-Arenstorf (WA) transformation} for the significance of their contributions.

\item 
\textit{\textbf{Real and quaternionic representations}} can both describe spatial regularization.
The influential monograph on the KS transformation by Stiefel and Scheifele \cite{Stiefel1971} adopts the real-coordinate form to formulate their transformation and associated EOM. 
On the other hand, increasing number of studies opt to represent the KS transformation in a hyper-complex space with three imaginary units (i.e., quaternions) \cite{Velte1978,Vivarelli1983,Deprit1994,Waldvogel2006,Waldvogel2006b,Waldvogel2008,Breiter2017,Breiter2019} because the transformation ``is much easier to interpret and generalize in the language of quaternions'' \cite{Breiter2019}.
Similarly, Broucke \cite{Broucke1965} also discusses the benefit of viewing \textit{planar} regularization through the lens of conformal mapping on the complex plane and presents a formulation that unifies various regularizing coordinates for the \textit{planar} ER3BP.
Naturally, the quaternionic representation has been adopted in some studies for global regularization of the spatial R3BPs as well \cite{Arenstorf1969,Chelnokov2018}.

\item 
\textit{\textbf{Newtonian and Hamiltonian formulations}}:
Another key viewpoint lies in the approach to formulating the transformations.
Early regularization studies mostly rely on the Newtonian (or vectorial) approach, applying coordinate-time transformations at the differential equation level \cite{Thiele1895,Levi-Civita1904,Kustaanheimo1965a}.
In contrast, the Hamiltonian viewpoint reformulates these transformations in phase space \cite{Moser1970,Saha2009,Zhao2015,Meer2021}.
The Hamiltonian formulation is instrumental to systematically derive momenta from regularizing coordinates and incorporate different time-regularization schemes while preserving the canonical structure.
It also facilitates mathematical proof and analysis of underlying dynamical properties such as regularity by providing geometric insights and compact representations of dynamical systems.

\end{itemize}

The present paper builds on and extends this rich literature on regularization, with the focus on \textit{global, spatial ER3BP regularization}.
As discussed above, \cite{Waldvogel1967,Arenstorf1969} achieve global regularization of the spatial ER3BP via the WA transformation.
The formulations presented in \cite{Waldvogel1967,Arenstorf1969} use Lagrangian mechanics formalism and do not explicitly incorporate arbitrary external forces, which can be a limitation for some practical applications.
On the other hand, the present paper provides an alternative derivation of the WA transformation through the lens of the \textit{quaternionic, Hamiltonian} formulation.
This quaternionic, Hamiltonian approach facilitates the mathematical proof and analysis of important properties of the WA transformation and resulting Hamiltonian and differential equations.

This paper adopts the quaternionic formulation for its ability to streamline the derivation and compact expressions of the regularizing transformation.
The quaternionic approach provides a unifying view of the apparently distinct coordinate transformations through the lens of conformal mapping.
Specifically, this paper defines the regularizing coordinates for the spatial ER3BP as a series of conformal(-like) mappings%
\footnote{called ``conformal-\textit{like}'' here because, strictly speaking, the KS transformation is not a conformal mapping in an ordinary sense; see \cref{remark:WA-non-conformal}.}
in the quaternion space, drawing an inspiration from \cite{Waldvogel2008}.

This paper combines the benefits offered by this quaternionic representation with geometric insights provided by the Hamiltonian formalism.
This approach allows us to devise quaternionic cotangent lifting, which systematically derives the momenta mappings associated with these conformal-like mappings in the quaternion space while preserving the canonical structure.
This paper then analyzes the regularity of the WA-transformed Hamiltonian and its dynamical flow by taking advantage of tools from quaternion algebra and Hamiltonian mechanics.
This analytical mechanics approach also makes it straightforward to incorporate the effect of external acceleration, such as solar radiation pressure (SRP), third-body perturbations, and continuous thrusting, into the system through canonical transformations of generalized forces.

The rest of this paper is structured as follows.
\cref{sec:quaternion} provides key concepts and properties of quaternion algebra.
\cref{sec:ER3BP} defines the dynamical system and introduces its Hamiltonian in the shifted coordinates with time transformation;
it also points out that the time transformation alone does not regularize the system.
\cref{sec:transformation} presents the WA coordinates for global regularization of the spatial ER3BP, derives their conjugate momenta using the Hamiltonian formalism, and discusses their key properties.
\cref{sec:regularized_hamiltonian} gives an explicit form of the resulting Hamiltonian in the WA space, proves its regularity, and discusses some numerical implementation aspects.
\cref{sec:examples} numerically demonstrates the validity and effectiveness of the presented regularization formulation in the perturbed Earth-Moon ER3BP.
\cref{sec:conclusions} concludes the paper.

\section{Hyper-complex numbers}
\label{sec:quaternion}

This work extensively uses hyper-complex numbers, in particular quaternions.%
\footnote{\textit{Quaternion} refers to the mathematical object that extends the complex number by introducing three imaginary units, and must not be confused with the \textit{orientation quaternion}, which is a specific instance where \textit{unit quaternion} becomes useful.
Quaternions considered in this work are generally not of unit-norm.}
Denote the set of quaternions by $\mathbb{H}$.
In general, we can express a quaternion $h \in \mathbb{H} $ as follows:
\begin{align}
h = h_1 i + h_2 j + h_3 k + h_4, \quad h_1, h_2, h_3, h_4 \in \R
\end{align}
where $i, j, k$ are the three imaginary units and satisfy the following cyclic property:
\begin{align}
i^2 = j^2 = k^2 = ijk = -1, \quad
ij = -ji = k, \quad
jk = -kj = i, \quad
ki = -ik = j
\end{align}
This implies that quaternion multiplication is not commutative, i.e., $gh \neq hg$ for $g, h \in \mathbb{H} $ in general.
Similarly, we must distinguish divisions from left and right;
we write $h^{-1}g$ or $g h^{-1} $ instead of $\frac{g}{h} $ to distinguish the left/right divisions.
The conjugate of $h\in\mathbb{H} $, denoted by $\conj{h} $, is defined as:
\begin{align}
\conj{h} = - h_1 i - h_2 j - h_3 k + h_4
\end{align}
which implies $\conj{(\conj{h})} =h $, and $\conj{(gh)} = \conj{h}\conj{g} $ for $g, h \in \mathbb{H} $.
The absolute value of $h\in\mathbb{H} $ is given by
\begin{align}
\abs{h} = \abs{\conj{h}} = \sqrt{h\conj{h}} = \sqrt{\conj{h}h} = \sqrt{h_1^2 + h_2^2 + h_3^2 + h_4^2}
\label{eq:quaternion-absolute}
\end{align}
Similar to complex numbers, the absolute value of quaternions is multiplicative:
\begin{align}
\abs{qh} = \sqrt{qh\conj{(qh)}} = \sqrt{qh\conj{h}\conj{q}} 
= \abs{q}\abs{h} = \abs{h}\abs{q}
\quad
\mathrm{for}\ q,h\in \mathbb{H}
\label{eq:quatAbsMultiplicative}
\end{align}
Quaternionic inverse and its conjugate are calculated as:
\begin{align}
h^{-1} = \frac{\conj{h}}{\abs{h}^2},
\quad
\conj{(h^{-1})} = \frac{h}{\abs{h}^2} = (\conj{h})^{-1}
\label{eq:quaternion-inverse}
\end{align}

We define a subset of the quaternion space:
\begin{align}
\begin{aligned}
\IH \defeq \{h \in \mathbb{H} \mid \Ree{(h)} = 0 \},
\label{eq:set-IH}
\end{aligned}
\end{align}
which is useful to relate a quaternion with a 3-D physical quantity (see \cref{remark:IH-and-R3}).

We introduce three maps, $\quat: \R^4\mapsto \HH$, $\vecH: \HH \mapsto \R^4$, and $\vecIH: \IH \mapsto \R^3$ to clarify the relation between a real vector and a quaternion.
Specifically, $\quat{(\cdot)}$ and $\vecH{(\cdot)}$ represent the identification of a real 4-vector as a quaternion and its inverse, respectively, while $\vecIH{(\cdot)}$ identifies a quaternion in $\IH$ as a real 3-vector.
Mathematically, these are defined as:
\begin{align}
\begin{aligned}
\quat(\bm{q}) = q,\quad
\vecH(q) = \bm{q},\quad
\vecIH(p) = \bm{p}
\end{aligned}
\label{eq:quaternion-map}
\end{align}
where $\bm{q} = [q_1, q_2, q_3, q_4]^\top\in\R^4$, $q = q_1 i + q_2 j + q_3 k + q_4\in\HH$, 
$\bm{p} = [p_1, p_2, p_3]^\top\in\R^3$, and $p = p_1 i + p_2 j + p_3 k \in\IH$.

\section{Elliptic Restricted Three-body Problem (ER3BP)}
\label{sec:ER3BP}

\subsection{Formulation in the Pulsating-Rotating Frame}

To model the ER3BP (and the CR3BP as a special case), we adopt the pulsating-rotating frame \cite{Gomez2002a,Park2025}, called $\mathcal{R}$-frame in this paper.
Consider the motion of a massless particle $P_3$ under the gravitational influence of two primaries $P_1$ and $P_2$.
Denote their gravitational parameters by $\mu_1$ and $\mu_2$, where assume $\mu_1 > \mu_2$ without loss of generality.
The frame origin is at the barycenter of $P_1$ and $P_2$.
The basis vectors are denoted by $(\hat{\bm{i}}, \hat{\bm{j}}, \hat{\bm{k}})$, where $\hat{\bm{i}}$ is aligned with the direction from $P_1$ to $P_2$, $\hat{\bm{k}}$ aligned with the angular momentum of the primaries about the barycenter, and $\hat{\bm{j}}$ completes the right-hand system.
Denote quantities expressed in the pulsating-rotating frame by the left superscript ${^{\mathcal{R}}}(\cdot)$, while those in the inertial frame by ${^{\mathcal{N}}}(\cdot)$.

Let us define relevant quantities associated with the pulsating-rotating frame.
Denote the dimensional positions of $P_1$, $P_2$, and $P_3$ with respect to the origin (barycenter) by $\bm{R}_1$, $\bm{R}_2$, and $\bm{R}_3$ (all in $\R^3$), respectively.
Then, $\bm{R}_3$ in the inertial frame ($\mathcal{N}$-frame) is expressed as:
\begin{align}
\begin{aligned}
{^{\mathcal{N}}}\bm{R}_3(T) = \ell(T) C(T)\ {^{\mathcal{R}}}\bm{\rho}_3(t),
\label{eq:inertial-position}
\end{aligned}
\end{align}
where $T$ is the dimensional time, $\ell(T) = \norm{\bm{R}_{12}(T)}_2$ is the instantaneous $P_1$-$P_2$ distance ($\bm{R}_{12} = \bm{R}_2 - \bm{R}_1 $), and $C(T)$ is the rotation matrix relating $\mathcal{R}$-frame and $\mathcal{N}$-frame, given by:
\begin{align}
\begin{aligned}
C(T) &= [{^{\mathcal{N}}}\hat{\bm{i}}(T),\ {^{\mathcal{N}}}\hat{\bm{j}}(T),\ {^{\mathcal{N}}}\hat{\bm{k}}(T)] \\
\hat{\bm{i}} &= \frac{\bm{R}_{12}}{\ell},
\qquad
\hat{\bm{k}} = \frac{\bm{h}}{\norm{\bm{h}}_2},
\qquad
\hat{\bm{j}} = \hat{\bm{k}} \times \hat{\bm{i}},
\qquad
\bm{h} = \bm{R}_{12} \times \frac{\diff \bm{R}_{12}}{\diff T}.
\end{aligned}
\end{align}
Since the $\hat{\bm{i}}$-$\hat{\bm{j}}$ plane contains the orbit of $P_1$ and $P_2$, 
the frame angular velocity is given by
$\bm{\Omega}(T) = \frac{\diff \nu}{\diff T} \hat{\bm{k}}$, where $\nu\in\R$ is the true anomaly of the primary about the barycenter.

Following the standard formulation of R3BPs, we non-dimensionalize the system for canonical formulation and for numerical advantages.
$\bm{\rho}_3$ from \cref{eq:inertial-position} represents the non-dimensional position of $P_3$ in $\mathcal{R}$-frame, and hence $\ell(T)$ serves as the characteristic length.
Denoting the non-dimensional mass parameter and time as $\mu$ and $t$ respectively, these are given by:
\begin{align}
\begin{aligned}
\mu = \frac{\mu_2}{\mu_1 + \mu_2},
\qquad
\frac{\diff t}{\diff T} = \sqrt{\frac{\mu_1 + \mu_2}{\ell(T)^3}}
\end{aligned}
\label{eq:non-dim-param}
\end{align}
The non-dimensional angular velocity of the frame, denoted by $\bm{\omega}(t)$, is given by:
\begin{align}
\bm{\omega}(t) = \omega_z(t) \hat{\bm{k}},
\quad \text{where}\quad
\omega_z(t) = \frac{\diff \nu}{\diff t} = \frac{\diff \nu}{\diff T} \frac{\diff T}{\diff t}
\end{align}
This confirms $\omega_z = 1$ in the CR3BP since $\frac{\diff \nu}{\diff T} = \frac{\diff t}{\diff T} = \sqrt{{(\mu_1 + \mu_2)}/{\ell^3}} $ and $\ell(T)$ is constant ($\because$ the primary orbit is circular).
On the other hand, in the ER3BP, $\omega_z$ is time-varying and given by $\omega_z(t) = \sqrt{1 + e\cos{(\nu(t))}} $, where $e$ is the eccentricity of the primary's orbit.

In the pulsating-rotating frame, the two primaries are fixed at
$\bm{\rho}_1 = -\mu \hat{\bm{i}}$ and $\bm{\rho}_2 = (1 - \mu)\hat{\bm{i}}$.
Let $\bm{r}\in\R^3$ and $\bm{v}\in\R^3$ denote the non-dimensional position and velocity, respectively, of $P_3$ in the pulsating-rotating frame.
They are formally defined as:
\begin{align}
\begin{aligned}
\bm{r}(t) \defeq \bm{\rho}_3(t) = r_x \hat{\bm{i}} +  r_y \hat{\bm{j}} + r_z \hat{\bm{k}}
\qquad
\bm{v}(t) \defeq \frac{{^\mathcal{R}}\diff \bm{r}}{\diff t} = v_x \hat{\bm{i}} +  v_y \hat{\bm{j}} + v_z \hat{\bm{k}}
\end{aligned}
\end{align}
where the left superscript in ${{^\mathcal{R}}\diff(\cdot)}/{\diff t}$ denotes the time differentiation of a quantity as seen by the $\mathcal{R}$ frame.
With this approach, the time varying effects of the frame rotation and $\ell(T)$  are captured by a single term $\omega_z(t)$, which becomes apparent in the Hamiltonian given in \cref{eq:Hamiltonian-ER3BP}.

\subsection{ER3BP Hamiltonian in Shifted Coordinates}
For global regularization of ER3BP, it is convenient to place the primaries symmetrically at $\pm 1/2$ along the $\hat{\bm{i}}$-axis \cite{Birkhoff1915,Lemaitre1955,Thiele1895,Broucke1965,Oguri2024b}.
We apply an affine shift of coordinates within the pulsating-rotating frame.
Denoting the P1/P2 position in the shifted coordinates by $\bm{\rho}_{1,\mathrm{shift}} = -\frac{1}{2} \hat{\bm{i}}$ and $\bm{\rho}_{2,\mathrm{shift}} = \frac{1}{2}\hat{\bm{i}}$, it is clear that $\bm{\rho}_{i,\mathrm{shift}} = \bm{\rho}_{i} - \left(\frac{1}{2} - \mu\right)\hat{\bm{i}}$ for $i=1,2$.
We apply the same affine positional shift to $P_3$:
\begin{align}
\begin{aligned}
\bm{r}_{\mathrm{shift}} \defeq \bm{r} - l_0\hat{\bm{i}},
\quad
\bm{v}_{\mathrm{shift}} \defeq \frac{\diff \bm{r}_{\mathrm{shift}}}{\diff t} = \bm{v},
\quad \text{where}\quad l_0 = \frac{1}{2} - \mu
\end{aligned}
\end{align}

We define the ER3BP Hamiltonian using the shifted coordinates.
First, introduce generalized coordinates $\bm{q} = q_1\hat{\bm{i}} + q_2 \hat{\bm{j}} + q_3 \hat{\bm{k}} $ and associated momenta $\bm{p} = p_1 \hat{\bm{i}} + p_2 \hat{\bm{j}} + p_3 \hat{\bm{k}}$ as:
\begin{align}
\begin{aligned}
\bm{q} = \bm{r}_{\mathrm{shift}} = \bm{r} - l_0\hat{\bm{i}}
,\quad
\bm{p} 
= \bm{v}_{\mathrm{shift}} + \omega_z(t) \hat{\bm{k}} \times \bm{r}_{\mathrm{shift}} 
= \bm{v} + \omega_z(t) \hat{\bm{k}} \times (\bm{r} - l_0\hat{\bm{i}})
\label{eq:qp-original}
\end{aligned}
\end{align}
Now, express the ER3BP Hamiltonian in terms of $(\bm{q}, \bm{p})$ as follows \cite{Birkhoff1915,Lemaitre1955,Thiele1895,Broucke1965,Oguri2024b}:
\begin{align}
\begin{aligned}
H(t, \bm{q}, \bm{p}) = 
\frac{1}{2}\bm{p}\cdot \bm{p} + \omega_z(t)(q_2 p_1 - q_1p_2) + V(\bm{q}) - l_0 q_1,
\ 
V(\bm{q}) = - \left(\frac{1 - \mu}{r_{13}(\bm{q})} + \frac{\mu}{r_{23}(\bm{q})} \right),
\label{eq:Hamiltonian-ER3BP}
\end{aligned}
\end{align}
where $r_{13}$ denotes the $P_1$-$P_3$ distance while $r_{23}$ the $P_2$-$P_3$ distance, given by
\begin{align}
r_{13}(\bm{q}) = \sqrt{\left(q_1 + \frac{1}{2}\right)^2 + q_2^2 + q_3^2},
\quad
r_{23}(\bm{q}) = \sqrt{\left(q_1 - \frac{1}{2}\right)^2 + q_2^2 + q_3^2}.
\end{align}

Since $H$ in \cref{eq:Hamiltonian-ER3BP} is not time-autonomous, we extend the phase space by introducing $q_0 = t$ and its conjugate momentum $p_0 = -H$.
Denoting the Hamiltonian with this extended phase space by $H_e$, it is given by (e.g.,  Chapter 5, \cite{Abraham1978}):
\begin{align}
\begin{aligned}
H_e(q_0, p_0, \bm{q}, \bm{p}) &= H(q_0, \bm{q}, \bm{p}) + p_0 
\label{eq:extendedHamiltonian}
\end{aligned}
\end{align}
where the initial condition of $(q_0, p_0)$ is given by
\begin{align}
q_0(t=0) = 0,\quad
p_0(t=0) = -H(0, \bm{q}(t=0), \bm{p}(t=0))
\label{eq:q0p0-IC}
\end{align}
Applying the Hamilton's equation to $H_e$ yields:
\begin{align}
\begin{aligned}
\frac{\diff \bm{q}}{\diff t} 
=& \frac{\partial H_e}{\partial \bm{p}}
,\qquad 
\frac{\diff \bm{p}}{\diff t} 
= - \frac{\partial H_e}{\partial \bm{q}} + \bm{g}
,\qquad 
\frac{\diff q_0}{\diff t} 
= \frac{\partial H_e}{\partial p_0}
,\qquad 
\frac{\diff p_0}{\diff t} 
= - \frac{\partial H_e}{\partial q_0} + g_0
\label{eq:HamiltonianEOM-original}
\end{aligned}
\end{align}
where $\bm{g}\in\R^3$ and $g_0\in\R$ denote the generalized forces induced by external acceleration $\bm{f}\in\R^3$ (e.g., SRP, continuous thrust) acting on the physical system, given by:
\begin{align}
\bm{g} = \sum_{i=1}^{3} \frac{\partial r_i}{\partial \bm{q}} f_i = \bm{f} ,
\quad
g_0 = - \sum_{i=1}^{3} \frac{\partial H}{\partial p_i}g_i = - \bm{b}(\bm{q},\bm{p}) \cdot \bm{f}
\label{eq:generalized-force-original}
\end{align}
where $\bm{b}: \R \times \R^3\times \R^3 \mapsto \R^3 $ is: 
\begin{align}
\bm{b}(q_0, \bm{q},\bm{p}) \defeq \nabla_{\bm{p}} H = \bm{p} + \omega_z(q_0) q_2 \hat{\bm{i}} - \omega_z(q_0) q_1 \hat{\bm{j}}
\label{eq:b-vector}
\end{align}
The external perturbations $\bm{f}$ is assumed to be regular on the domain of interest.
We can confirm that 
\cref{eq:HamiltonianEOM-original} yields $\diff q_0 / \diff t = 1$, and with \cref{eq:q0p0-IC}, that $q_0(t) = t$.
Similarly,
\cref{eq:HamiltonianEOM-original} together with \cref{eq:generalized-force-original} gives $\diff p_0 / \diff t = - \partial H / \partial t + g_0 = - \diff H / \diff t $, and with \cref{eq:q0p0-IC}, that $p_0(t) = -H$ along the dynamical flow governed by \cref{eq:HamiltonianEOM-original}, where $H$ is generally not constant.

\subsection{Time-transformed ER3BP Hamiltonian}
\label{sec:time-transform}

One of the necessary steps in regularization is time transformation (also called time regularization).
Time transformation has the effect of ``slowing down'' the motion of the particle near a gravitational body.

To perform time transformation, we introduce fictitious time, $s$.
Since we have two singularities in ER3BP, we introduce a fictitious time similar to the one by Thiele \cite{Thiele1895}:
\begin{align}
\begin{aligned}
\frac{\diff t}{\diff s} = \eta(\bm{q}) \defeq r_{13}(\bm{q})\cdot r_{23}(\bm{q}) = 
\sqrt{\left[\left(q_1 + \frac{1}{2}\right)^2 + q_2^2 + q_3^2\right] \left[\left(q_1 - \frac{1}{2}\right)^2 + q_2^2 + q_3^2\right]}
\label{eq:fictitious-time}
\end{aligned}
\end{align}
To preserve the canonical structure under this time transformation, we introduce the time-regularized Hamiltonian:
\begin{align}
\begin{aligned}
\hat{H}_e(q_0, p_0, \bm{q}, \bm{p}) \defeq \eta(\bm{q}) \cdot H_e(q_0, p_0, \bm{q}, \bm{p})
\label{eq:timeHamiltonian}
\end{aligned}
\end{align}
where recall $H_e$ is defined in \cref{eq:extendedHamiltonian}.
Similarly, we transform the generalized forces to establish a new canonical system that represents the equivalent physical system, as follows:
\begin{align}
\begin{aligned}
\hat{\bm{g}} 
= \eta\, \bm{g} 
= \eta\, \bm{f} 
,\qquad
\hat{g}_0  
= \eta\, g_0
= - \eta\, \bm{b}\cdot \bm{f}
\label{eq:generalized-force-time}
\end{aligned}
\end{align}
where recall \cref{eq:b-vector,eq:fictitious-time} for $\bm{b}$ and $\eta$.
The equations of motion are given by:
\begin{align}
\begin{aligned}
\frac{\diff \bm{q}}{\diff s} 
=& \frac{\partial \hat{H}_e}{\partial \bm{p}}
,\qquad
\frac{\diff \bm{p}}{\diff s} 
= - \frac{\partial \hat{H}_e}{\partial \bm{q}} + \hat{\bm{g}}
\qquad
\frac{\diff q_0}{\diff s} 
= \frac{\partial \hat{H}_e}{\partial p_0}
,\qquad
\frac{\diff p_0}{\diff s} 
= - \frac{\partial \hat{H}_e}{\partial q_0} + \hat{g}_0
\label{eq:HamiltonianEOM-time}
\end{aligned}
\end{align}
We can again confirm that combining \cref{eq:HamiltonianEOM-time,eq:generalized-force-time} with the initial condition \cref{eq:q0p0-IC} implies that $q_0 = t$ and $p_0 = - H$ along the dynamical flow.

Here, we can also see that singularities remain in \cref{eq:HamiltonianEOM-time}, confirming that applying only the time regularization is not sufficient to regularize the ER3BP or CR3BP. 
Due to the presence of the square root in $r_{13}$ and $r_{23}$, a partial derivative of them with respect to $\bm{q}$ yields division by $r_{13}$ or $r_{23}$, respectively, which leads to zero-division when $P_3$ collides with $P_1$ or $P_2$.

\section{Spatial Extension of Birkhoff's Coordinates}
\label{sec:transformation}

\subsection{Waldvogel-Arenstorf Coordinates}
\label{sec:WA-coordinates}

As proved by Hurwitz, Levi-Civita-like transformations exist in spaces of dimension $m = 1,2,4,8$ \cite{Roa2016a}.
This implies that regularization of spatial problems must lift $\R^3$ to $\R^4$, as is done in the KS transformation.
We use a transformation $\bm{z} \mapsto \bm{q}$ where $\bm{q}\in\R^3$ is defined in \cref{eq:qp-original} while $\bm{z}\in\R^4 $ is the lifted coordinates.
We use \cref{eq:quaternion-map} to identify $\bm{z} $ and $ \bm{q}$ as quaternions:
\begin{align}
\begin{aligned}
z = \quat(\bm{z}) = z_1 i + z_2 j + z_3 k + z_4,
\quad
q = \quat(\bm{q}) = q_1 i + q_2 j + q_3 k + q_4 \quad (q_4 = 0)
\end{aligned}
\end{align}
where note that $q\in\IH \subset \HH $ since $q_4=0$; recall the definition of $\IH$ in \cref{eq:set-IH}.

\begin{remark}\label{remark:IH-and-R3}
We can regard a subspace $\IH \times \IH $ as a quaternionic representation of the physical space $\R^3\times \R^3$.
For coordinates and momenta in the physical space $\bm{q} = q_1 \hat{\bm{i}} + q_2 \hat{\bm{j}} + q_3 \hat{\bm{k}} $ and $\bm{p} = p_1 \hat{\bm{i}} + p_2 \hat{\bm{j}} + p_3 \hat{\bm{k}} $, their quaternionic representations live in $\IH $ as $q = q_1 i + q_2 j + q_3 k \in \IH $ and $p = p_1 i + p_2 j + p_3 k \in \IH $, indicating the correspondence of $(q_1, q_2, q_3)$ and $(p_1, p_2, p_3)$ in the physical space and the quaternionic space.
\end{remark}

The Waldvogel-Arenstorf (WA) map \cite{Waldvogel1967,Arenstorf1969} combines the Birkhoff's coordinates (\textit{planar, global} regularization) and the KS coordinates (\textit{spatial, local} regularization) for \textit{spatial, global} regularization.
The underlying idea is as follows.
As discussed in \cite{Waldvogel1967,Waldvogel2008}, one can regard the Birkhoff's mapping as a composition of two M\"obius maps in the complex space and a Levi-Civita map.
This observation implies that one can extend the Birkhoff's coordinates to spatial problems by
1) introducing the two M\"obius maps in the \textit{quaternionic} space,
and
2) replacing the Levi-Civita map with the KS map as a quaternionic extension.

Thus, the WA map consists of three coordinate transformations.
Letting $\q{0}(\defeq q)$, $\q{1}$, $\q{2}$, $\q{3}(\defeq z) $ represent the intermediate coordinates, the WA map for coordinates, denoted by $\mathcal{WA}_q$, is expressed as:
\begin{align}
\mathcal{WA}_q:\ \HH \mapsto \HH,\qquad z \mapsto q,
\quad \mathrm{with} \quad
z = \q{3} \xmapsto{\mathcal{M}^{(1)}_q} \q{2} \xmapsto{\mathcal{KS}_q} \q{1} \xmapsto{\mathcal{M}^{(2)}_q} \q{0} = q
\label{eq:WA-coordinates}
\end{align}
where $\mathcal{M}^{(1)}_q$ and $\mathcal{M}^{(2)}_q$ denote the two M\"obius maps, and $\mathcal{KS}_q$ the coordinate part of the quaternionic KS map.
These maps are given by:%
\footnote{Note a notational difference from \cite{Waldvogel2008}, which uses the definition $z = q_1 + q_2 i + q_3 j + q_4 k$ with $q_4 = 0$, so the basis vectors of the physical space $(\hat{\bm{i}}, \hat{\bm{j}},\hat{\bm{k}})$ are aligned with the real axis, $i$-axis, and $j$-axis, respectively.
On the other hand, this paper takes $(\hat{\bm{i}}, \hat{\bm{j}},\hat{\bm{k}})$ to be aligned with the imaginary $i$-axis, $j$-axis, and $k$-axis, respectively, for easier correspondence.}
\begin{subequations}
\label{eq:mappingComposition}
\begin{align}
\mathcal{M}^{(1)}_q:\ & \HH \mapsto \HH, \quad&&
\q{2} \defeq \mathcal{M}^{(1)}_q(\q{3}) = i - {2}{(\q{3} - i)^{-1}} &&
\quad (\text{M\"obius map})
\label{eq:mobius-a}
\\
\mathcal{KS}_q\ :\ & \HH \mapsto \HH, \quad&&
\q{1} \defeq \mathcal{KS}_q(\q{2}) = \qic{2}i\q{2} &&
\quad (\text{KS map})
\label{eq:hopf-b}
\\
\mathcal{M}^{(2)}_q:\ & \HH \mapsto \HH &&
\q{0} \defeq \mathcal{M}^{(2)}_q(\q{1}) = \frac{1}{2}\left[i - {2}{(\q{1} - i)^{-1}} \right] &&
\quad (\text{M\"obius map})
\label{eq:mobius-c}
\end{align}
\end{subequations}%

This approach leverages the two facts summarized in \cref{theorem:Hopf_Mobius}.

\begin{lemma}\label{theorem:Hopf_Mobius}
{\upshape (i)} the KS map $y = \conj{x}ix $ sends any quaternion $x\in\mathbb{H}$ to a pure-imaginary quaternion, i.e., $y\in\IH $.
{\upshape (ii)} the M\"obius map $y = i - {2}{(x - i)^{-1}}$ preserves the pure imaginary property, i.e., if $x\in\IH$, then $y \in \IH$ under $y = i - {2}{(x - i)^{-1}}$.
\end{lemma}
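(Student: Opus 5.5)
The natural route is to characterize $\IH$ through conjugation: a quaternion $h$ lies in $\IH$ if and only if $\conj{h} = -h$, since $\conj{h} = -h$ forces $h_4 = -h_4$, i.e., $\Ree(h)=0$. Both parts then reduce to computing the conjugate of the image and checking that it equals the negative of the image. We use only the algebraic facts collected in \cref{sec:quaternion}: $\conj{(gh)} = \conj{h}\,\conj{g}$, $\conj{(\conj{h})} = h$, and $\conj{(h^{-1})} = (\conj{h})^{-1}$ from \cref{eq:quaternion-inverse}.

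For part (i), set $y = \conj{x}ix$. The conjugation rule for products, applied twice, gives
\begin{align*}
\conj{y} = \conj{x}\,\conj{i}\,\conj{(\conj{x})} = \conj{x}(-i)x = -y .
\end{align*}
Hence $\Ree(y)=0$ and $y\in\IH$. This holds for every $x\in\HH$; when $x=0$ the image is simply $0\in\IH$.

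For part (ii), take $x\in\IH$ with $x\neq i$, so that the inverse exists. Since $\conj{x}=-x$, we have
\begin{align*}
\conj{(x-i)} = \conj{x}+i = -(x-i),
\end{align*}
and therefore
\begin{align*}
\conj{\bigl((x-i)^{-1}\bigr)} = \bigl(\conj{(x-i)}\bigr)^{-1} = -(x-i)^{-1}.
\end{align*}
It follows that
\begin{align*}
\conj{y} = \conj{i} - 2\,\conj{\bigl((x-i)^{-1}\bigr)} = -i + 2(x-i)^{-1} = -y,
\end{align*}
so $y\in\IH$.

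The only step requiring care is the domain of part (ii): the map is undefined at $x=i$. That point should be explicitly excluded, or treated as the point sent to infinity in the one-point compactification. One could also note that the same argument covers the scaled map $\mathcal{M}^{(2)}_q$, since it differs from the map in (ii) only by the real factor $\tfrac12$, which does not affect the real part vanishing.
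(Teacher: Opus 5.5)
Your proof is correct and follows the paper's. Part (i) is the same conjugation computation. For part (ii), the paper writes $(x-i)^{-1} = (-x+i)/\abs{x-i}^2$ and observes that $y$ is a real linear combination of $x$ and $i$, whereas you check $\conj{y}=-y$ directly via $\conj{(x-i)^{-1}} = -(x-i)^{-1}$; this is the same fact in slightly different form, and your explicit exclusion of $x=i$ is a small point of care the paper leaves implicit.
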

\begin{proof}
(i) $\conj{y} = \conj{(\conj{x}ix)} = \conj{x}\conj{i}x = - \conj{x}ix = - y $, which implies $\Ree(\conj{y}) = 0$.
(ii) Suppose $x \in\IH$.
Then, $\conj{x} = - x$, and hence $(x - i)^{-1} = \frac{- x + i}{\abs{x - i}^2}  $.
It implies
$y = i - {2}{(x - i)^{-1}} = i + \frac{2}{\abs{x - i}^2}x - \frac{2}{\abs{x - i}^2}i $, which is a real linear combination of $x$ and $i$, and hence pure imaginary.
\end{proof}

\begin{remark}\label{remark:WA-non-conformal}
While the WA map \cref{eq:mappingComposition} can be viewed as a lifted spatial analogue of the Birkhoff's map, strictly speaking, it is not conformal (as opposed to the Birkhoff's map).
In the planar problem, complex conformal maps reshape the configuration space while preserving the metric up to a scalar factor \cite{Thiele1895,Broucke1965}.
In three dimensions, conformal maps are more restrictive and cannot provide the same kind of regularization.
The WA construction therefore lifts the spatial problem through a KS map.
However, the KS map is not conformal;
since motion along the redundant direction in the lifted KS space does not change the physical position, it acts conformally only on the 3-D subspace orthogonal to the redundant direction.%
\footnote{Note that the discussion here is consistent with the property Kustaanheimo and Stiefel describe in \cite{Kustaanheimo1965a} about the KS mapping as: ``as conformal as a mapping of an $\R^4$ onto an $\R^3$ ever can be.''}
Regardless, after accounting for the redundant direction, \cref{eq:mappingComposition} retains the scalar metric-scaling property that makes conformal maps useful for regularization.
\end{remark}

\subsection{Canonical Momenta in Waldvogel-Arenstorf Coordinates}
\label{sec:WA-momenta}

Now, let us derive the momenta for the WA coordinates while preserving the canonical structure.

To derive canonical momenta,
this paper takes the cotangent-lift approach instead of the generating-function formulation.
The cotangent-lift approach appears more natural for quaternionic formulations.
As discussed in \cref{sec:quaternion}, the quaternionic algebra is not commutative, which makes the differentiation and multiplication ordering cumbersome;
if one were to take the generating function approach, one would have to properly take care of the non-commutative property of quaternions.
In addition, the class of functions that are \textit{quaternion-differentiable} is restrictive compared to real- or complex-valued functions \cite{Sudbery1979}.
On the other hand, the cotangent-lift approach allows us to derive the momenta without requiring the quaternionic differentiability while explicitly preserving the canonical structure.

\subsubsection{Quaternionic Cotangent Lift}
Let us first introduce a general procedure for calculating momenta in quaternion form for a given smooth coordinate map, which is crucial for deriving momentum mapping for each coordinate map in \cref{eq:mappingComposition}.
This paper calls this approach \textit{quaternionic cotangent lift}.
As preparation, let us introduce two quaternionic operators needed.
First, define the operator $\langle x,y\rangle_{\R}$ for $x,y\in \HH$:
\begin{align}
\label{eq:pairing}
\langle x,y\rangle_{\R} := \vecH{(x)} \cdot \vecH{(y)} = \Ree\!\left(x\,\conj{y}\right),
\end{align}
which can be interpreted as the vector inner product with the identification of inputs in $\HH$ as $\R^4$;
for example, if $x=x_1i+x_2j+x_3k+x_4$ and $y=y_1i+y_2j+y_3k+y_4$, then $\langle x,y\rangle_{\R} =x_1y_1+x_2y_2+x_3y_3+x_4y_4$.
Next, define the adjoint of a real-linear map%
\footnote{A map $L:\HH\to\HH$ is called real-linear if $L(\alpha x+\beta y)=\alpha L(x)+\beta L(y)$ for all $x,y\in\HH$ and $\alpha,\beta\in\R$.}
$L$ and denote it by $\Ad{L}$, which satisfies
\begin{align}
\label{eq:adjoint-def}
\langle x, L(y)\rangle_{\R} = \langle \Ad{L}(x), y\rangle_{\R} \quad \forall x,y\in\HH.
\end{align}
We have the following useful identity regarding the adjoint operator for a map $L_{a,b}(y)\defeq a y b$:
\begin{align}
\label{eq:adjoint-rule}
L_{a,b}(y)=a y b 
\quad \iff\quad
\Ad{L_{a,b}}(x)=\bar a\,x\,\bar b
\end{align}
because
$\langle x,L_{a,b}(y)\rangle_{\R}
= \Ree\!\left(x\,\overline{a y b}\right)
= \Ree\!\left(x\,\bar b\,\bar y\,\bar a\right)
= \Ree\!\left(\bar a\,x\,\bar b\,\bar y\right)
= \left\langle \bar a\,x\,\bar b,y\right\rangle_{\R}$.

These operators aid in deriving quaternionic cotangent lift in a general form (\cref{theorem:quat-contangent-lift}) and for the quaternionic M\"obius map (\cref{theorem:mobius-momenta}).

\begin{lemma}\label{theorem:quat-contangent-lift}
The canonical momentum map $p \mapsto P$ associated with a smooth quaternionic coordinate map $Q = F(q)$ is obtained by solving \cref{eq:P-to-p} for $P$:
\begin{align}
p=\Ad{(\diff F_q)}(P)
\label{eq:P-to-p}
\end{align}
where $\Ad{(\cdot)}$ is the adjoint of a map as defined in \cref{eq:adjoint-def}, and $\diff F_q$ is the differential of $F$.
\end{lemma}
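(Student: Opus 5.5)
We will obtain \cref{theorem:quat-contangent-lift} by writing the standard real cotangent lift in $\R^4$ and then rephrasing it with the pairing \cref{eq:pairing} and the adjoint \cref{eq:adjoint-def}. Identify $q$ and $Q$ with $\vecH(q),\vecH(Q)\in\R^4$, and set $J(q)\defeq \partial\vecH(F(q))/\partial\vecH(q)$, the real $4\times 4$ Jacobian. The differential $\diff F_q:\HH\to\HH$ is the real-linear map $\delta q\mapsto \quat\bigl(J(q)\,\vecH(\delta q)\bigr)$. Recall that a point transformation $Q=F(q)$ is canonical exactly when it is completed by the momentum map that preserves the Liouville one-form, $\sum_\alpha P_\alpha\,\diff Q_\alpha=\sum_\alpha p_\alpha\,\diff q_\alpha$. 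Equivalently, we can take the type-2 generating function $S(q,P)=\vecH(P)\cdot\vecH(F(q))$, which gives $\vecH(Q)=\partial S/\partial \vecH(P)=\vecH(F(q))$ and $\vecH(p)=\partial S/\partial\vecH(q)=J(q)^\top\vecH(P)$.

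First, we rewrite the one-form condition quaternionically. For any tangent vector $\delta q$,
\begin{align*}
\langle P,\diff F_q(\delta q)\rangle_{\R}=\langle p,\delta q\rangle_{\R},
\end{align*}
since the left side equals $\vecH(P)\cdot J\vecH(\delta q)$ and the right side equals $\vecH(p)\cdot\vecH(\delta q)$. Second, we apply the adjoint definition \cref{eq:adjoint-def} to the real-linear map $L=\diff F_q$. This turns the left side into $\langle \Ad{(\diff F_q)}(P),\delta q\rangle_{\R}$. Since $\delta q\in\HH$ is arbitrary and $\langle\cdot,\cdot\rangle_{\R}$ is nondegenerate (it is the Euclidean inner product on $\R^4$), we conclude $p=\Ad{(\diff F_q)}(P)$, which is \cref{eq:P-to-p}. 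In matrix form, $\Ad{(\diff F_q)}$ is represented by $J^\top$.

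Third, we argue that \cref{eq:P-to-p} can be solved for $P$ and that the result is canonical. Wherever $F$ is a local diffeomorphism, $J$ is invertible, so $P=\quat\bigl(J^{-\top}\vecH(p)\bigr)$ is uniquely determined. Preservation of the one-form then gives $\diff P\wedge\diff Q=\diff p\wedge\diff q$ by exterior differentiation. Hence $(q,p)\mapsto(Q,P)$ is symplectic, and Hamilton's equations keep their form with $K(Q,P)=H(q,p)$.

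The main obstacle is the KS stage, because there $F$ maps $\HH$ to $\HH$ but has rank three: its image lies in $\IH$ by \cref{theorem:Hopf_Mobius}(i). The Jacobian is then not invertible and $\Ad{(\diff F_q)}$ has a kernel. For this case we would state the result as follows. \cref{eq:P-to-p} still defines the momentum relation, with $p$ in the lifted space determined by $P$. The inverse direction $p\mapsto P$ is solvable only on the constraint set where $p$ lies in the range of $\Ad{(\diff F_q)}$, which is the analogue of the KS bilinear relation. The $P$-component along $\ker\Ad{(\diff F_q)}$ must be fixed by restricting $P\in\IH$, consistent with \cref{remark:IH-and-R3}. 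We would handle this by noting that the one-form argument above only requires $\delta q$ to range over all of $\HH$; it does not require invertibility. So \cref{eq:P-to-p} remains the correct canonical relation, and solvability holds under that restriction.
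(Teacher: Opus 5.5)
Your proposal is correct and follows essentially the same route as the paper: start from the real cotangent lift $\vecH(p)=J^\top\vecH(P)$, rewrite it as $\langle p,\delta q\rangle_{\R}=\langle P,\diff F_q(\delta q)\rangle_{\R}$ for all $\delta q\in\HH$, and then apply the adjoint definition \cref{eq:adjoint-def} together with nondegeneracy of the pairing. Your extra remarks on invertibility and on the rank-deficient KS stage go beyond this proof; the paper handles that issue separately through $\admsKS$ in \cref{remark:hopf-momenta} and \cref{sec:WA-transformation}, and your remarks are consistent with that treatment.
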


\begin{proof}
It is widely known that the real-valued canonical map for coordinates and momenta $(\bm{q},\bm{p})\mapsto (\bm{Q}, \bm{P})$ satisfies the following equation (e.g., \cite[Sec.~6.3]{Marsden1999}):
\begin{align}
\bm{p} = \mathbf{J}(\bm{q})^\top \bm{P},\quad \text{where}\quad \mathbf{J}(\bm{q}) = \frac{\partial \bm{Q}}{\partial \bm{q}}
\end{align}
which implies $\bm{p} ^\top \delta \bm{q} = \bm{P}^\top \mathbf{J}(\bm{q})\delta \bm{q}$ for arbitrary variation $\delta\bm{q} $.
Rewriting it in a variational form and using the original quaternionic coordinates and momenta $({q},{p})$ and their canonical counterpart $({Q}, {P})$, we have
\begin{align}
\label{eq:quaternionic-cotangent-lift}
\Ree\!\left(p\,\overline{\delta q}\right)
=
\Ree\!\left(P\,\overline{\diff F_q(\delta q)}\right)
,\quad \forall\delta q\in\HH
\end{align}
where $\diff F_q:\HH\to\HH$ maps an arbitrary variation $\delta q\in\HH$ to the variation in $Q\in\HH$, i.e., $\delta Q = \diff F_q(\delta q)$.
Like usual cotangent lift, \cref{eq:quaternionic-cotangent-lift} ensures that the preservation of the canonical one-form.
Noting that \cref{eq:quaternionic-cotangent-lift} is equivalent to $\langle p, \delta q\rangle_{\R} = \langle P, \diff F_q(\delta q)\rangle_{\R}$ and using \cref{eq:adjoint-def}, we have
\begin{align}
\langle p, \delta q\rangle_{\R} = \langle P, \diff F_q(\delta q)\rangle_{\R} \underset{\cref{eq:adjoint-def}}{=} \langle \Ad{(\diff F_q)}(P), \delta q\rangle_{\R} \  \forall\,\delta q\quad
\iff\quad
p = \Ad{(\diff F_q)}(P)
\label{eq:cotangent-lift}
\end{align}
which completes the proof.
\end{proof}

\begin{remark}\label{remark:quaternionic-momenta}
In general, \cref{eq:P-to-p} for quaternionic momenta can be calculated as:
\begin{align}
p=\Ad{(\diff F_q)}(P) = \sum_{k=1}^m \overline{a_k(q)}\,P\,\overline{b_k(q)}
\end{align}
where $a_k,b_k \in\HH $ are quaternions that satisfy $\diff F_q(\delta q) = \sum_{k=1}^m a_k\,\delta q\,b_k$ and hence depend on the map $F$ and the reference point $q$.
Such decomposition of $\diff F_q$ into $a_k, b_k$, and $\delta q$ is guaranteed to exist since
any real-linear map can be expressed as a finite sum of left-right multiplication operators (e.g., see \cite[Lemma~7.5.4]{Voight2021}).
\end{remark}

\subsubsection{Quaternionic Waldvogel-Arenstorf Momenta Derivation}

Similar to \cref{eq:WA-coordinates}, 
denoting the intermediate momenta by $\p{0}(\defeq p), \p{1}, \p{2}, \p{3}(\defeq \xi) $, the WA map for momenta, $\mathcal{WA}_p$, is expressed as:
\begin{align}
\begin{aligned}
\mathcal{WA}_p:&\ \HH \times \HH \mapsto \HH,\qquad (z, \xi) \mapsto p,
\\
&\mathrm{with} \quad
(z, \xi) = (\q{3}, \p{3}) \xmapsto{\mathcal{M}^{(1)}_p} (\q{2}, \p{2}) \xmapsto{\mathcal{KS}_p} (\q{1}, \p{1}) \xmapsto{\mathcal{M}^{(2)}_p} \p{0} = p
\end{aligned}
\end{align}
where $\mathcal{KS}_p$ is the momenta part of the quaternionic KS map, while 
$\mathcal{M}^{(1)}_p$ and $\mathcal{M}^{(2)}_p$ are the momenta maps associated with $\mathcal{M}^{(1)}_q$ and $\mathcal{M}^{(2)}_q$ from \cref{eq:mappingComposition}.

Let us first discuss $\mathcal{KS}_p$.
Building on the quaternionic view of the KS coordinates presented in \cite{Velte1978,Vivarelli1983,Deprit1994,Waldvogel2006,Waldvogel2008}, some recent studies \cite{Saha2009,Zhao2015,Breiter2017,Takeuchi2025} derive the associated momenta mapping $\mathcal{KS}_p$ in quaternion form as follows:
\begin{align}
\mathcal{KS}_p: \admsKS \mapsto \IH,\qquad 
\p{1} = \mathcal{KS}_p(\q{2}, \p{2}) = \frac{1}{2|{\q{2}}|^2}\conj{\q{2}} i\p{2}
\label{eq:hopf-cotangent}
\end{align}
where $\admsKS$ is the admissible set for $(\q{2}, \p{2})$ so that the image of $\mathcal{KS}_p$ lies in $\IH$, defined as:
\begin{align}
\begin{aligned}
\admsKS \defeq \left\{(q,p)\in\HH\times\HH \mid \Ree\left(\conj{q}\, i p\right)=0 \right\}
\end{aligned}
\label{eq:KS-admissible-set}
\end{align}
This condition corresponds to the so-called \textit{bilinear} relation in literature in the real-number form (often described as $L(\bm{q})\bm{p} = L(\bm{p})\bm{q} $ where $L(\cdot)$ is the KS matrix; see, for instance, \cite{Stiefel1971}).

\begin{remark}\label{remark:hopf-momenta}
Notice that in \cref{eq:hopf-cotangent}, $\p{1} = \conj{\q{2}} i\p{2} / (2|{\q{2}}|^2) $ does not necessarily belong to $\IH$ for an arbitrary pair $(\q{2} ,\p{2})\in\HH\times\HH$.
Thus, ensuring that $(\q{2}, \p{2})$ belongs to the set $\admsKS$ is crucial for $\mathcal{KS}_p$ to always return momenta in $\IH$, and hence in the 3-D physical space.
This aspect is discussed further and resolved in \cref{sec:WA-transformation}.
\end{remark}

Next, let us consider $\mathcal{M}^{(1)}_p$ and $\mathcal{M}^{(2)}_p$.
Their analytical expressions are available in the quaternionic form in \cite{Takeuchi2025} \textit{without} derivation.%
\footnote{Note that \cite{Takeuchi2025} studies the WA transformation for the problem of two fixed centers, also known as the \textit{Euler's two-center problem}.
It considers the motion of a particle under gravitational attraction of two point masses fixed in the inertial frame (not orbiting each other).
One can regard it as a simplified CR3BP with the Coriolis and centrifugal forces ignored.
See \cite{Lukyanov2005} for a review of the two-center problem.}
Thus, for completeness, \cref{theorem:mobius-momenta} derives $\mathcal{M}^{(1)}_p$ via the cotangent-lift approach.
It is straightforward to adapt \cref{theorem:mobius-momenta} to $\mathcal{M}^{(2)}_p$.

\begin{lemma}
\label{theorem:mobius-momenta}
Consider a M\"obius map for quaternionic coordinates $\q{3}\in\HH$ and $\q{2}\in\HH:$
\begin{align}
\mathcal{M}_q^{(1)} : \HH \mapsto \HH,\quad 
\q{2} = \mathcal{M}_q^{(1)} (\q{3}) = i - 2 (\q{3}-i)^{-1}.
\label{eq:mobius}
\end{align}
Then, the canonical momenta mapping associated with this transformation is given by 
\begin{align}
\mathcal{M}_p^{(1)}: \HH \times \HH \mapsto \HH,\quad 
\p{2} = \mathcal{M}_p^{(1)}(\q{3},\p{3}) = \frac{1}{2}\conj{(\q{3} - i)}\p{3}\conj{(\q{3} - i)}.
\label{eq:Mobius-cotangent}
\end{align}
\end{lemma}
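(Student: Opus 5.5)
The plan is to apply \cref{theorem:quat-contangent-lift} directly with $F=\mathcal{M}^{(1)}_q$, original coordinates $q=\q{3}$ with momenta $p=\p{3}$, and new coordinates $Q=\q{2}$ with momenta $P=\p{2}$. By that lemma, the relation we need is $\p{3}=\Ad{(\diff F_{\q{3}})}(\p{2})$, which we then solve for $\p{2}$. Throughout we work on the domain $\q{3}\neq i$ where the map is defined, and we abbreviate $w\defeq \q{3}-i$, so that $\q{2}=i-2w^{-1}$ and $\delta w=\delta\q{3}$.

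\emph{Step 1: the differential.} Differentiating the identity $w\,w^{-1}=1$ gives $\delta w\, w^{-1}+w\,\delta(w^{-1})=0$. Left-multiplying by $w^{-1}$ yields the non-commutative rule $\delta(w^{-1})=-w^{-1}\,\delta w\,w^{-1}$. Hence
\begin{align*}
\diff F_{\q{3}}(\delta \q{3}) = 2\,w^{-1}\,\delta\q{3}\,w^{-1}.
\end{align*}
This is a single left-right multiplication operator $L_{a,b}$ with $a=2w^{-1}$ and $b=w^{-1}$. This is the only step needing care, because the factors must keep their order; I would justify it exactly as above and not appeal to any quaternionic derivative.

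\emph{Step 2: the adjoint.} By the rule \cref{eq:adjoint-rule}, $\Ad{(\diff F_{\q{3}})}(x)=2\,\conj{(w^{-1})}\,x\,\conj{(w^{-1})}$. From \cref{eq:quaternion-inverse} we have $\conj{(w^{-1})}=w/\abs{w}^2$. Therefore
\begin{align*}
\p{3}=\frac{2}{\abs{w}^4}\,w\,\p{2}\,w.
\end{align*}

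\emph{Step 3: inversion.} Multiply this relation on the left and on the right by $w^{-1}=\conj{w}/\abs{w}^2$. This gives $\p{2}=\tfrac{\abs{w}^4}{2}\,w^{-1}\p{3}w^{-1}=\tfrac{1}{2}\,\conj{w}\,\p{3}\,\conj{w}$, which is \cref{eq:Mobius-cotangent} once we substitute $w=\q{3}-i$.

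As a sanity check, we can verify $\langle \p{3},\delta\q{3}\rangle_{\R}=\langle \p{2},\delta\q{2}\rangle_{\R}$ directly, using $\Ree(xy)=\Ree(yx)$ and the multiplicativity of $\abs{\cdot}$. Note that no admissibility condition arises here, unlike for $\mathcal{KS}_p$, because $\diff F$ is invertible wherever $\q{3}\neq i$.
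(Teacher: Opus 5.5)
Your proof is correct and takes essentially the same route as the paper: apply the cotangent-lift lemma, use the adjoint rule for left--right multiplication maps on the differential, and then invert the resulting relation. The only difference is a small simplification on your side. You differentiate $i-2w^{-1}$ directly and get the single sandwich term $2w^{-1}\,\delta\q{3}\,w^{-1}$. The paper instead writes $F=(i\q{3}-1)(\q{3}-i)^{-1}$, gets two product-rule terms, and needs the extra identity $(\conj{\q{3}}+i)^{-1}(\conj{\q{3}}\,i+1)=i+2(\conj{\q{3}}+i)^{-1}$ to combine them into the same relation $\p{3}=2(\conj{\q{3}}+i)^{-1}\p{2}(\conj{\q{3}}+i)^{-1}$.
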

\begin{proof}
In this proof, we rewrite $\mathcal{M}_q^{(1)} , \q{3}, \q{2}, \p{3}, \p{2} $ as $F,q,Q,p,P$ for notational brevity.
First, note that $F = i - 2 (q-i)^{-1} = (i q - 1)\,(q - i)^{-1}$.
Calculating the differential $\diff F_q $ using the product rule and the identity $\diff (X^{-1}) = -X^{-1}(\diff X)X^{-1}$ yields:
\begin{align}
\diff F_q(\delta q) = i\,\delta q\,(q - i)^{-1} - (i q - 1)\,(q - i)^{-1}\,\delta q\,(q - i)^{-1}
\label{eq:mobius-differential}
\end{align}
Using \cref{theorem:quat-contangent-lift} while applying \cref{eq:adjoint-rule} to \cref{eq:mobius-differential} term-by-term, we obtain a form that is consistent with the claim in \cref{remark:quaternionic-momenta}:
\begin{align}
\begin{aligned}
p 
&= \Ad{(\diff F_q)}(P)
= -i\,P\, \conj{(q - i)^{-1}} +  \conj{(q - i)^{-1}}\,( \conj{q}\,i + 1)\,P\, \conj{(q - i)^{-1}}
\\
&= -i\,P\,(\conj{q} + i)^{-1} + (\conj{q} + i)^{-1}\,(\conj{q}\,i + 1)\,P\,(\conj{q} + i)^{-1},
\label{eq:mobius-intermediate}
\end{aligned}
\end{align}
where $\overline{(q - i)^{-1}} = (\conj{q} + i)^{-1}$ using \cref{eq:quaternion-inverse}.
Using $\conj{q}\,i + 1 = (\conj{q} - i)\,i = [(\conj{q} + i) - 2i]\,i$,
\begin{align}
(\conj{q} + i)^{-1}(\conj{q}\,i + 1)
= [1 - 2 (\conj{q} + i)^{-1} i]i
= i + 2(\conj{q} + i)^{-1}.
\end{align}
Substituting this into \cref{eq:mobius-intermediate} yields $p = 2(\conj{q} + i)^{-1}P(\conj{q} + i)^{-1}$;
therefore, the cotangent lift of the M\"obius transformation \cref{eq:mobius} is given by
$p = 2\,(\conj{q} + i)^{-1}\;P\;(\conj{q} + i)^{-1} \iff P = \frac{1}{2}\conj{(q - i)}p\conj{(q - i)}$, completing the proof.
\end{proof}

\subsection{Condition for Physical Projection}
\label{sec:WA-transformation}

Let us now discuss the condition required for the WA transformation to return coordinates and momenta in the three-dimensional physical space, i.e., $(\q{0}, \p{0})\in \IH\times \IH $.

First, summarizing \cref{eq:mappingComposition,eq:hopf-cotangent,eq:Mobius-cotangent}, we have the WA transformation as follows:
\begin{subequations}
\label{eq:mappingCompositionMomenta}
\begin{align}
&(\q{3}, \p{3}) \mapsto (\q{2}, \p{2}) : &&
\q{2} = i - {2}{(\q{3} - i)^{-1}}, &&
\p{2} = \frac{\conj{(\q{3} - i)} \p{3} \conj{(\q{3} - i)}}{2}
\label{eq:mobius-all-a}
\\
&(\q{2}, \p{2}) \mapsto (\q{1}, \p{1}) : &&
\q{1} = \conj{\q{2}}i\q{2},
 &&
\p{1} = \frac{1}{2|{\q{2}}|^2}\conj{\q{2}} i\p{2}
\label{eq:hopf-all-b}
\\
&(\q{1}, \p{1}) \mapsto (\q{0}, \p{0}) : &&
\q{0} = \frac{1}{2}[i - {2}{(\q{1} - i)^{-1}} ],
 &&
\p{0} = \conj{(\q{1} - i)} \p{1} \conj{(\q{1} - i)}
\label{eq:mobius-all-c}
\end{align}
\end{subequations}

First focusing on the coordinates, \cref{theorem:Hopf_Mobius} and \cref{eq:mappingComposition} imply that $\mathcal{KS}_q$ in \cref{eq:mappingComposition} always returns $\q{0}\in\IH$ for any $\q{3}\in\HH $, because $\mathcal{KS}_q$ sends any $\q{2}\in\HH $ to $\q{1}\in\IH $ due to (i) of \cref{theorem:Hopf_Mobius}, and $\mathcal{M}^{(2)}_q$ sends $\IH$ to $\IH$ due to (ii) of \cref{theorem:Hopf_Mobius}.
Thus, \cref{eq:mappingComposition} is guaranteed to project a quaternionic quantity onto physical coordinates.

On the other hand, the momenta mapping requires some attention.
In general, applying \cref{eq:mappingCompositionMomenta} to an arbitrary WA state $(z, \xi)\in\HH\times\HH$ may not produce a purely imaginary momentum, which is necessary for the momentum to be in the 3-D physical space.
As discussed in \cref{remark:hopf-momenta}, ensuring $\p{1} \in \IH$ requires $(\q{2}, \p{2})\in \admsKS $.
Thus, we need to derive the condition for $(\q{3},\p{3}) $ to ensure that $\mathcal{M}_p^{(1)} $ always returns $(\q{2}, \p{2})\in \admsKS $.
To that end, denote the admissible set for the WA transformation by $\admsWA $, defined as:
\begin{align}
\admsWA \defeq \left\{(z, \xi) \in \HH \times \HH \mid \Ree\left[(\conj{z} + i)\xi(\conj{z} - i)\right] = 0 \right\}.
\label{eq:WA-admissibility}
\end{align}
where recall $(\q{3}, \p{3}) = (z, \xi) $ and $(\q{0}, \p{0}) = (q, p) $.
\cref{theorem:WAdmissibleCharacterization} shows that $(\q{3},\p{3}) \in \admsWA $ is equivalent to $(\q{2}, \p{2})\in \admsKS $, and hence to $(q, p) \in \IH \times \IH$ (i.e., the coordinates and momenta are in the 3-D physical space).

\begin{lemma}
\label{theorem:WAdmissibleCharacterization}
Let $(z, \xi) = (\q{3}, \p{3})$ be a WA state for which \cref{eq:mappingCompositionMomenta} is well-defined, and let $(q, p) = (\q{0}, \p{0}) $ be its image under that map.
Then, $(q, p) \in \IH \times \IH  \iff  (z, \xi) \in \admsWA$.
\end{lemma}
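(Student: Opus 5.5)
The plan is to peel off the three stages of \cref{eq:mappingCompositionMomenta} one at a time, reducing the condition $(q,p)\in\IH\times\IH$ to a scalar condition on $(z,\xi)$. The coordinate half is already settled: as argued right after \cref{eq:mappingCompositionMomenta}, \cref{theorem:Hopf_Mobius} gives $q=\q{0}\in\IH$ for every $z$ at which the map is defined. Hence $(q,p)\in\IH\times\IH$ is equivalent to $\Ree(\p{0})=0$, and the work is to show $\Ree(\p{0})=0\iff\Ree[(\conj{z}+i)\xi(\conj{z}-i)]=0$. Throughout I will use two facts: $\Ree(xy)=\Ree(yx)$, and $\Ree(\lambda x)=\lambda\Ree(x)$ for real $\lambda$.

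\emph{Step 1 (last M\"obius stage).} Set $a\defeq \q{1}-i$. Since $\q{1}\in\IH$ by \cref{theorem:Hopf_Mobius}(i), $a\in\IH$, so $\conj{a}=-a$. Then \cref{eq:mobius-all-c} gives $\p{0}=a\p{1}a$, and cyclicity yields
\begin{align*}
\Ree(\p{0})=\Ree(a a\p{1})=-\abs{a}^2\Ree(\p{1}).
\end{align*}
Well-definedness of the map forces $a\neq 0$. Therefore $\Ree(\p{0})=0\iff\Ree(\p{1})=0$.

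\emph{Step 2 (KS stage).} From \cref{eq:hopf-all-b}, $\Ree(\p{1})=\Ree(\conj{\q{2}}i\p{2})/(2\abs{\q{2}}^2)$ with $\q{2}\neq0$. So $\Ree(\p{1})=0$ is equivalent to $(\q{2},\p{2})\in\admsKS$.

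\emph{Step 3 (first M\"obius stage).} This is the only real computation. Write $w\defeq\conj{z}$, $A\defeq w+i$ and $B\defeq w-i$.
\begin{itemize}
\item From $\q{2}=(iz-1)(z-i)^{-1}$ and \cref{eq:quaternion-inverse},
\begin{align*}
\conj{\q{2}}=-A^{-1}(wi+1).
\end{align*}
\item Multiplying by $i$ and using $(wi+1)i=-B$ gives $\conj{\q{2}}\,i=A^{-1}B$.
\item Since $\conj{(z-i)}=A$, the momentum formula in \cref{eq:mobius-all-a} reads $\p{2}=\tfrac12 A\xi A$.
\end{itemize}
Combining these,
\begin{align*}
\conj{\q{2}}\,i\,\p{2}=\tfrac12 A^{-1}BA\xi A .
\end{align*}
By cyclicity of $\Ree$,
\begin{align*}
\Ree\bigl(A^{-1}BA\xi A\bigr)=\Ree\bigl(BA\xi\bigr)=\Ree\bigl(A\xi B\bigr),
\end{align*}
which is exactly $\Ree[(\conj{z}+i)\xi(\conj{z}-i)]$. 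Hence $(\q{2},\p{2})\in\admsKS\iff(z,\xi)\in\admsWA$. Chaining Steps 1--3 proves both directions at once.

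The main obstacle I expect is the sign and ordering bookkeeping in Step 3, specifically the conjugation of the noncommutative M\"obius expression and the identity $(wi+1)i=-B$. The cyclic property of the real part is what makes the final form collapse cleanly. The remaining care point is recording that the nonvanishing of $z-i$, $\q{2}$ and $\q{1}-i$ is exactly what ``well-defined'' provides, so the divisions in Steps 1 and 2 are legitimate.
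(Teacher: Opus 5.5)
Your proposal is correct and follows the paper's proof essentially step for step. You reduce to $\Ree(\p{0})=0$, pass through the last M\"obius stage to $\Ree(\p{1})=0$, and then use $\conj{\q{2}}\,i=(\conj{z}+i)^{-1}(\conj{z}-i)$, $2\p{2}=(\conj{z}+i)\xi(\conj{z}+i)$ and cyclicity of $\Ree$ to get $\Ree[(\conj{z}+i)\xi(\conj{z}-i)]=4\abs{\q{2}}^2\Ree(\p{1})$; the only (slightly more direct) variation is your scalar identity $\Ree(\p{0})=-\abs{\q{1}-i}^2\Ree(\p{1})$ in Step 1, which replaces the paper's argument that $uvu\in\IH$ whenever $u,v\in\IH$.
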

\begin{proof}
This proof uses $(\q{0}, \p{0})$ and $(\q{3}, \p{3})$ instead of $(q, p)$ and $(z, \xi)$ for notational clarity.
So, the goal is to show that $(\q{0}, \p{0}) \in \IH \times \IH \iff (\q{3}, \p{3}) \in \admsWA$.
First, $\q{1} = \conj{\q{2}}i\q{2}$ implies $\q{1}\in\IH$ due to (i) of \cref{theorem:Hopf_Mobius}.
Then, the last M\"obius step in \cref{eq:mappingCompositionMomenta} yields $\q{0}\in\IH$ due to (ii) of \cref{theorem:Hopf_Mobius}.
Hence
\begin{align}
\begin{aligned}
(\q{0}, \p{0}) \in \IH \times \IH
 \iff 
\p{0}\in\IH.
\label{eq:LM4-eq1}
\end{aligned}
\end{align}
Calculating the inverse of the second equation of \cref{eq:mobius-all-c} yields
$\p{1} = (\conj{\q{1}} + i)^{-1}\p{0}(\conj{\q{1}} + i)^{-1}$.
Since $\q{1}\in\IH$, both $\conj{(\q{1} - i)} = -\q{1} + i$ and $(\conj{\q{1}} + i)^{-1} = (-\q{1} + i)^{-1}$, which appear in \cref{eq:mobius-all-c} and its inverse, are pure imaginary.
Also, for any $u,v\in\IH$, it holds that $\conj{(uvu)} = \conj{u}\conj{v}\conj{u} = (-u)(-v)(-u) = -uvu$.
Thus, $uvu\in\IH$ whenever $u,v\in\IH$. 
The two relations above imply
\begin{align}
\begin{aligned}
\p{0}\in\IH
 \iff 
\p{1}\in\IH
 \iff 
\Ree(\p{1}) = 0
\label{eq:p0-p1-pureImaginary}
\end{aligned}
\end{align}

Now, from \cref{eq:mobius-all-a}, we have
\begin{align}
&\conj{\q{2}} = - i - 2(\conj{\q{3}} + i)^{-1} = - (\conj{\q{3}} + i)^{-1}(\conj{\q{3}} - i)i 
 \iff 
\conj{\q{3}} - i = (\conj{\q{3}} + i)\conj{\q{2}}i,
\label{eq:LM4-temp1}
\\
&2\p{2} = \conj{(\q{3} - i)} \p{3} \conj{(\q{3} - i)} = (\conj{\q{3}} + i) \p{3} (\conj{\q{3}} + i)
\label{eq:LM4-temp3}
\end{align}
Thus, the defining condition of $\admsWA$ in \cref{eq:WA-admissibility} is equivalent to
\begin{align}
\begin{aligned}
\Ree\left[(\conj{\q{3}} + i)\p{3}(\conj{\q{3}} - i)\right]
&=
\Ree\left[(\conj{\q{3}} + i)\p{3}(\conj{\q{3}} + i)\conj{\q{2}}i\right]
=
2\Ree(\p{2}\conj{\q{2}}i)
\\
&= 2\Ree(\conj{\q{2}}i\p{2}) = 4|\q{2}|^2 \Ree(\p{1})
\label{eq:LM4-temp4}
\end{aligned}
\end{align}
where the first identity is due to \cref{eq:LM4-temp1}, the second is due to \cref{eq:LM4-temp3}, the third is due to invariance of the real part of a quaternion product, and the fourth is due to \cref{eq:hopf-all-b}.
Therefore, \cref{eq:LM4-temp4,eq:WA-admissibility} imply
\begin{align}
(\q{3}, \p{3}) \in \admsWA  \iff  \Ree(\p{1}) = 0.
\label{eq:LM4-temp5}
\end{align}
Combining \cref{eq:LM4-temp5} with \cref{eq:p0-p1-pureImaginary,eq:LM4-eq1} completes the proof.
\end{proof}

\cref{theorem:WAdmissibleCharacterization} clarifies the following two facts:
(i) if we transform a pair of physical coordinates and momenta to $(z,\xi)$ via the inverse of \cref{eq:mappingCompositionMomenta}, then such $(z,\xi)$ will automatically reside in $\admsWA$.
Also, (ii) if we have $(z,\xi)\in\admsWA$, then \cref{eq:mappingCompositionMomenta} always returns $(q,p)\in\IH \times \IH$, which can then be identified as a physical state $(\bm{q},\bm{p})\in \R^3\times\R^3$ via \cref{eq:quaternion-map}.
Thus, we can guarantee that
\begin{align}
\begin{aligned}
\mathcal{WA}		&: \admsWA \mapsto \IH \times \IH, \quad (z,\xi) \mapsto (q,p) && \quad \mathrm{(Physical\ Projection)}
\\
\mathcal{WA}^{-1}	&: \IH \times \IH \mapsto \admsWA, \quad (q,p) \mapsto (z,\xi) && \quad \mathrm{(Lifting\ Map)}
\end{aligned}
\end{align}
where $\mathcal{WA}$ denotes the total WA transformation ($\mathcal{WA}_q $ and $\mathcal{WA}_p $) while $\mathcal{WA}^{-1}$ is its inverse.
This paper calls these transformations \textit{physical projection} and \textit{lifting map}, respectively.
The lifting map $\mathcal{WA}^{-1}$ is discussed further in \cref{sec:lifting-map}.

While \cref{eq:mappingCompositionMomenta} gives the complete map for $\mathcal{WA}$ as a composition of three maps, it is often useful to have an expression that represents $\mathcal{WA}$ as a single map, describing $(q,p)$ directly in terms of $(z,\xi)$.
In particular, such an expression is necessary to write the Hamiltonian in terms of the WA state $(z,\xi)$ explicitly.
\cref{theorem:single-shot-WA} addresses it.

\begin{lemma}
\label{theorem:single-shot-WA}
Expanding the composite mappings in \cref{eq:mappingCompositionMomenta}, we have:
\begin{subequations}
\label{eq:WA-transformation}
\begin{align}
q& = \mathcal{WA}_q(z) = \frac{1}{2} \left[i - (z - i) (z - \conj{z})^{-1} (\conj{z} + i) \right]
\label{eq:mappingSpatial}
\\
p& = \mathcal{WA}_p(z,\xi) 
= - \frac{(z - i)(z - \conj{z})^{-1}(\conj{z} - i)(\conj{z} + i) \xi (z - \conj{z}) (z - i)^{-1}}{\abs{2\mathcal{WA}_q(z) - i} \abs{2\mathcal{WA}_q(z) + i}}
\label{eq:mappingSpatialMomenta}
\end{align}
\end{subequations}
\end{lemma}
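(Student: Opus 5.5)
The plan is to expand the three maps in \cref{eq:mappingCompositionMomenta} directly, treating coordinates first and momenta second. Throughout I will use only three tools: the factorizations already used in the proof of \cref{theorem:WAdmissibleCharacterization}, the multiplicativity \cref{eq:quatAbsMultiplicative}, and the inverse/conjugate rules \cref{eq:quaternion-inverse}.

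\textbf{Coordinates.} Write $\q{2} = (iz-1)(z-i)^{-1} = i(z+i)(z-i)^{-1}$. By \cref{eq:LM4-temp1}, $\conj{\q{2}} = -(\conj{z}+i)^{-1}(\conj{z}-i)\,i$. Substituting into the KS step gives
\[
\q{1} = \conj{\q{2}}\,i\,\q{2} = (\conj{z}+i)^{-1}(\conj{z}-i)\,i\,(z+i)(z-i)^{-1}.
\]
The key step is to compute $\q{1}-i$ by factoring $(\conj{z}+i)^{-1}$ on the left and $(z-i)^{-1}$ on the right. What remains in the middle is
\[
(\conj{z}-i)\,i\,(z+i) - (\conj{z}+i)\,i\,(z-i).
\]
Expanding with $i^2=-1$, the $\conj{z}iz$ and $i$ terms cancel, leaving $2(z-\conj{z})$. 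Hence
\[
\q{1}-i = 2(\conj{z}+i)^{-1}(z-\conj{z})(z-i)^{-1},
\qquad
(\q{1}-i)^{-1} = \tfrac12 (z-i)(z-\conj{z})^{-1}(\conj{z}+i).
\]
Inserting this into $\q{0} = \frac12\left[i - 2(\q{1}-i)^{-1}\right]$ yields \cref{eq:mappingSpatial} immediately.

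\textbf{Momenta.} I will substitute in sequence:
\begin{itemize}
\item $\p{2} = \frac12(\conj{z}+i)\,\xi\,(\conj{z}+i)$, from \cref{eq:LM4-temp3};
\item $\p{1} = \conj{\q{2}}\,i\,\p{2}/(2|\q{2}|^2)$;
\item $\p{0} = \conj{(\q{1}-i)}\,\p{1}\,\conj{(\q{1}-i)}$, with $\conj{(\q{1}-i)} = 2(\conj{z}+i)^{-1}(\conj{z}-z)(z-i)^{-1}$ obtained by conjugating the factorization above.
\end{itemize}
The quaternionic (non-scalar) part then telescopes. The factor $\conj{\q{2}}\,i = (\conj{z}+i)^{-1}(\conj{z}-i)$ absorbs neighbouring factors of $(\conj{z}+i)$, and the remaining $(\conj{z}+i)^{-1}$ must be handled so that the product collapses to $(z-i)(z-\conj{z})^{-1}(\conj{z}-i)(\conj{z}+i)\,\xi\,(z-\conj{z})(z-i)^{-1}$ times a real scalar. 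To make the left end of this product match, I will use the fact that $\q{1}-i$ is pure imaginary, so that $\conj{(\q{1}-i)} = -(\q{1}-i)$ and the left factor can be rewritten via the inverse formula for $(\q{1}-i)^{-1}$ above. Any positive real factors such as $|z-i|^{-2}$ produced by \cref{eq:quaternion-inverse} will be collected together with $|\q{2}|^{-2}$.

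\textbf{Scalar factor.} Next I identify the collected scalar with the denominator in \cref{eq:mappingSpatialMomenta}. From the last M\"obius step:
\begin{itemize}
\item $2\q{0}-i = -2(\q{1}-i)^{-1}$, so $|2q-i| = 2/|\q{1}-i|$;
\item $2\q{0}+i = 2(\q{1}-i)^{-1}\left[(\q{1}-i)i-1\right] = 2(\q{1}-i)^{-1}\q{1}\,i$, so $|2q+i| = 2|\q{1}|/|\q{1}-i| = 2|\q{2}|^2/|\q{1}-i|$.
\end{itemize}
Therefore $|2q-i|\,|2q+i| = 4|\q{2}|^2/|\q{1}-i|^2$. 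This is exactly the combination of $|\q{2}|^2$ (coming from $\mathcal{KS}_p$) and the norms of the outer M\"obius factors. To finish, I will use $|\q{2}| = |z+i|/|z-i|$ and $|\q{1}-i| = 2|z-\conj{z}|/(|z+i|\,|z-i|)$ to check that the residual norms cancel, and that the overall sign is $-1$ (it comes from $\conj{z}-z = -(z-\conj{z})$).

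\textbf{Main obstacle.} I expect the hardest part to be the momentum bookkeeping. Because of non-commutativity, the inverses must be moved using $\conj{(a^{-1})} = (\conj{a})^{-1}$ and $a^{-1} = \conj{a}/|a|^2$ without reordering factors, and the real scalars pulled out in this way must cancel precisely. If the direct telescoping gets messy, the fallback is to verify the claimed formula by running it backward: apply the inverse maps $\p{1} = (\conj{\q{1}}+i)^{-1}\p{0}(\conj{\q{1}}+i)^{-1}$ and $\p{2} = 2|\q{2}|^2\,(\conj{\q{2}}i)^{-1}\p{1}$ to the right-hand side of \cref{eq:mappingSpatialMomenta}, and check that this reproduces $\frac12(\conj{z}+i)\,\xi\,(\conj{z}+i)$.
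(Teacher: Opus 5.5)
Your proposal is correct and follows essentially the same route as the paper. The coordinate part rests on the same factorization $\q{1}-i = 2(\conj{z}+i)^{-1}(z-\conj{z})(z-i)^{-1}$. For the momenta, the paper likewise writes $\conj{\q{1}}+i$ in factored form on the right and as $|\q{1}-i|^{2}(\q{1}-i)^{-1}$ on the left, and it identifies the scalar $|\q{1}-i|^{2}/(4|\q{2}|^{2})$ with $1/(|2q-i|\,|2q+i|)$ exactly as in your scalar-factor step.

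One small slip is in your optional final check: since $|\conj{z}+i| = |z-i|$, one has $|\q{1}-i| = 2|z-\conj{z}|/|z-i|^{2}$, not $2|z-\conj{z}|/(|z+i|\,|z-i|)$. That check is unnecessary, though, because your identities written in terms of $\q{1}$ and $\q{2}$ already close the argument.
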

\begin{proof}
See \cref{proof:single-shot-WA}.
\end{proof}

\subsection{Lifting Map: Physical to WA coordinates}
\label{sec:lifting-map}
Inverting the mappings in \cref{eq:mappingCompositionMomenta}, we obtain the lifting map from the physical coordinates and momenta to the WA variables:
\begin{subequations}
\label{eq:mappingCompositionMomentaForward}
\begin{align}
&(\q{0}, \p{0}) \mapsto (\q{1}, \p{1}) : &&
\q{1} = i - {2}{(2\q{0} - i)^{-1}}, &&
\p{1} = {\conj{\left(\q{0} - \frac{i}{2}\right)} \p{0} \conj{\left(\q{0} - \frac{i}{2}\right)}}
\label{eq:mobius-forward-a}
\\
&(\q{1}, \p{1}) \mapsto (\q{2}, \p{2}) : &&
\q{2} \in  \{h\in \HH \mid \q{1} = \conj{h}ih \}, &&
\p{2} = - 2i\q{2}\p{1}
\label{eq:hopf-forward-b}
\\
&(\q{2}, \p{2}) \mapsto (\q{3}, \p{3}) : &&
\q{3} = i - {2}{(\q{2} - i)^{-1}},  &&
\p{3} = {\frac{1}{2}{\conj{(\q{2} - i)} \p{2} \conj{(\q{2} - i)}}}
\label{eq:mobius-forward-c}
\end{align}
\end{subequations}
As a direct consequence of \cref{theorem:WAdmissibleCharacterization}, any WA state obtained through the lifting map \cref{eq:mappingCompositionMomentaForward} automatically lies in $\admsWA$.

As clear from \cref{eq:hopf-forward-b}, the lifting map is not unique.
Different choices of $\q{2}$ in \cref{eq:hopf-forward-b} correspond to different WA representations of the same physical state (known as \textit{Hopf fibration} \cite{Roa2016a}).
The KS transformation also has this same non-uniqueness.
Hence, similar to \cite{Oguri2024b}, we parameterize the family of $\q{2}$ that satisfy \cref{eq:hopf-forward-b} via a single parameter.
Since $\q{0}\in\IH$ and \cref{theorem:Hopf_Mobius} imply $\q{1}\in\IH$, let us write $\q{1} = \q{1}_1 i + \q{1}_2 j + \q{1}_3 k$ and define
\begin{align}
\begin{aligned}
R \defeq \sqrt{\frac{\abs{\q{1}} + |\q{1}_1|}{2}},
\quad \text{where}\quad
\abs{\q{1}} = \sqrt{(\q{1}_1)^2 + (\q{1}_2)^2 + (\q{1}_3)^2}.
\end{aligned}
\end{align}
The fiber in \cref{eq:hopf-forward-b} can be parameterized by $\theta \in [0,2\pi)$ as
\begin{align}
\q{2}(\theta) =
\begin{cases}
r_1(\theta)i + r_2(\theta)j + r_3(\theta)k + r_4(\theta), & \q{1}_1 \ge 0,\\
r_2(\theta)i + r_1(\theta)j + r_4(\theta)k + r_3(\theta), & \q{1}_1 < 0,
\end{cases}
\label{eq:hopf-forward-theta}
\end{align}
where
\begin{align}
\begin{aligned}
r_1(\theta) &= R\sin\theta,
\quad
&&r_2(\theta) = \frac{\q{1}_2\sin\theta - \q{1}_3\cos\theta}{2R},
\\
r_3(\theta) &= \frac{\q{1}_2\cos\theta + \q{1}_3\sin\theta}{2R},
\quad
&&r_4(\theta) = -R\cos\theta.
\end{aligned}
\label{eq:hopf-forward-theta-components}
\end{align}
It is straightforward to confirm $\conj{\q{2}(\theta)}i\q{2}(\theta) = \q{1}$ for all $\theta$.

\subsection{Regularity of the WA Transformation}
Let us first discuss the regularity of the WA transformation given in \cref{eq:WA-transformation}.
To that end, first introduce a set $\inftyWA$, defined as:
\begin{align}
\inftyWA = \{z \in\mathbb{H} \mid z - \conj{z} = 0 \}
= \{z \in\mathbb{H} \mid \mathrm{Im}[z] = 0 \}
\label{eq:singular-set}
\end{align}
This set corresponds to the real line of the quaternion space (i.e., the imaginary part is zero).
As stated in \cref{theorem:regularity-WAmap}, the coordinate transformation given by \cref{eq:mappingSpatial} is regular except in the set $\inftyWA$.
\cref{theorem:singularSet} then shows that the set $\inftyWA$ corresponds to the infinity in the original physical coordinates $\R^3$, and hence is not of practical interest.
\begin{lemma}
\label{theorem:regularity-WAmap}
The WA coordinate transformation given in \cref{eq:mappingSpatial} is $C^{\infty}$ on $\mathbb{H}\setminus\inftyWA$.
\end{lemma}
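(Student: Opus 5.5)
The plan is to show that \cref{eq:mappingSpatial} is a composition of smooth operations on $\mathbb{H}\setminus\inftyWA$. We identify $\HH$ with $\R^4$ through $\vecH$. Then quaternion multiplication $(x,y)\mapsto xy$ is bilinear, hence polynomial, in the real components. Conjugation $z\mapsto\conj{z}$ is real-linear. Inversion $x\mapsto x^{-1}=\conj{x}/\abs{x}^2$ is, by \cref{eq:quaternion-inverse}, a rational map whose denominator $\abs{x}^2$ vanishes only at $x=0$. Consequently inversion is $C^\infty$ (in fact real-analytic) on $\HH\setminus\{0\}$.

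Next, write $\mathcal{WA}_q(z)=\frac{1}{2}\bigl[i-A(z)\,B(z)^{-1}\,C(z)\bigr]$, where $A(z)=z-i$, $B(z)=z-\conj{z}$ and $C(z)=\conj{z}+i$. Both $A$ and $C$ are affine in the components of $z$, hence $C^\infty$ on all of $\HH$. The factor $B(z)=2\,\mathrm{Im}[z]$ is linear, and it vanishes exactly when $z\in\inftyWA$ by \cref{eq:singular-set}. So for $z\in\HH\setminus\inftyWA$ we have $B(z)\neq 0$. We can write $B(z)^{-1}=\conj{B(z)}/\abs{B(z)}^2=-B(z)/\abs{B(z)}^2$, and its denominator $\abs{B(z)}^2=4(z_1^2+z_2^2+z_3^2)$ is strictly positive there. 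This shows $z\mapsto B(z)^{-1}$ is $C^\infty$ on the open set $\HH\setminus\inftyWA$. The set is open because $\inftyWA$ is closed, being the zero set of a continuous map.

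Finally, products and real-linear combinations of $C^\infty$ $\HH$-valued maps are $C^\infty$, since multiplication is bilinear and we can apply the chain rule componentwise. Hence $\mathcal{WA}_q$ is $C^\infty$ on $\HH\setminus\inftyWA$.

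The only real obstacle is to confirm that \cref{eq:mappingSpatial} (from \cref{theorem:single-shot-WA}) introduces no hidden singularity beyond $B(z)=0$. The composite representation \cref{eq:mappingComposition} has its own apparent singularities: at $\q{3}=i$, at $\q{1}=i$, and at $2\q{0}=i$. To handle this, we work directly with the closed form \cref{eq:mappingSpatial}, where the only inverse is $(z-\conj{z})^{-1}$; this avoids the issue. We can also note that the explicit real form of each component is a polynomial in $z_1,\dots,z_4$ divided by $z_1^2+z_2^2+z_3^2$, which makes the smoothness transparent.
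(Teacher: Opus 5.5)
Your proposal is correct and follows essentially the same route as the paper: split \cref{eq:mappingSpatial} into the factors $(z-i)$, $(z-\conj{z})^{-1}=(\conj{z}-z)/\abs{z-\conj{z}}^2$, and $(\conj{z}+i)$, note that the denominator is nonzero off $\inftyWA$, and conclude by the real-bilinearity of quaternion multiplication (your remark that $\HH\setminus\inftyWA$ is open adds a little rigor the paper omits). One minor slip in your closing aside: the apparent singularity at $2\q{0}=i$ comes from the lifting map \cref{eq:mappingCompositionMomentaForward}, not from the forward composition \cref{eq:mappingComposition}, though this does not affect your argument since you work directly with the closed form.
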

\begin{proof}
Since finite sums and products of $C^{\infty}$ functions yield a $C^{\infty}$ function,
we show that \cref{eq:mappingSpatial} consists of finite sums and products of $C^{\infty}$ functions on the domain $\mathbb{H}\setminus\inftyWA$.
First, the map $z \mapsto q$ consists of multiplications of $(z-i)$, $(z - \conj{z} )^{-1}$, and $(\conj{z} + i)$.
Noting $(z - \conj{z} )^{-1} = (\conj{z} - z) / \abs{z - \conj{z}}^2 $ and $\abs{z - \conj{z}}^2 \neq 0$ on $\mathbb{H}\setminus\inftyWA$, the map $z\mapsto (z - \conj{z})^{-1} $ is $C^\infty$ on $\mathbb{H}\setminus\inftyWA$.
Thus, $z \mapsto q$ also is $C^\infty$ on $\mathbb{H}\setminus\inftyWA$ since quaternion multiplication is bilinear over $\R$ (in other words, if $p=p_1i + p_2j + p_3k + p_4$, $q=q_1i + q_2j + q_3k + q_4$, and $r = pq = r_1i + r_2j + r_3k + r_4$, then each $r_i$ is a polynomial of $p_i$ and $q_i$).
\end{proof}

\begin{lemma}
\label{theorem:singularSet}
Under the transformation given by \cref{eq:mappingSpatial}, the set \cref{eq:singular-set} corresponds to the infinity of the physical space $\R^3$.
\end{lemma}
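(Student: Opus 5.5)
We will show that $\abs{q}\to\infty$ as $z$ approaches any point of $\inftyWA$, and conversely that $\abs{q}\to\infty$ forces $z$ to approach $\inftyWA$. The natural route goes through the intermediate variables of \cref{eq:mappingCompositionMomenta} rather than the single-shot formula \cref{eq:mappingSpatial}. The equivalence to establish is
\[
\abs{q}\to\infty \iff \q{1}\to i \iff \q{2}\in\{h\in\HH\mid \conj{h}ih=i\} \iff z-\conj{z}\to 0 .
\]

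\textbf{Step 1: outer M\"obius map.} From \cref{eq:mobius-c}, $q=\q{0}=\tfrac12[i-2(\q{1}-i)^{-1}]$. Using \cref{eq:quatAbsMultiplicative}, $\abs{(\q{1}-i)^{-1}}=1/\abs{\q{1}-i}$. Hence $\abs{q}\to\infty$ exactly when $\q{1}\to i$, with the bound $\bigl|\abs{q}-\tfrac{1}{\abs{\q{1}-i}}\bigr|\le \tfrac12$.

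\textbf{Step 2: the KS fiber over $i$.} We must characterize $\conj{\q{2}}i\q{2}=i$. Taking absolute values gives $\abs{\q{2}}=1$, so $\conj{\q{2}}=(\q{2})^{-1}$ and the condition becomes $i\q{2}=\q{2}i$. The centralizer of $i$ is $\{a+bi\}$, so the fiber is the unit circle $\{\cos\phi+i\sin\phi\}$.

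\textbf{Step 3: first M\"obius map.} This is a bijection $\HH\setminus\{i\}\to\HH\setminus\{i\}$ with inverse $\q{3}=i-2(\q{2}-i)^{-1}$, as in \cref{eq:mobius-forward-c}. We compute the preimage of $\q{2}=c+si$ with $c^2+s^2=1$. Everything lies in the commutative subalgebra $\mathbb{C}=\R+\R i$, so it is an ordinary complex computation:
\[
z=i-\frac{2}{c+(s-1)i}.
\]
A direct calculation should give $z=-c/(1-s)\in\R$, so the image of the unit circle minus $\{i\}$ is exactly the real line $\inftyWA$. To confirm, map $z=x\in\R$ forward: $\q{2}=i-2/(x-i)=(x^2-1+2xi)/(x^2+1)$, which is indeed a unit complex number.

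\textbf{Main obstacle: limits versus exact fibers.} The steps above show $\mathcal{WA}_q^{-1}(\{\infty\})=\inftyWA$ in the sense of exact fibers, but the statement is about limits. We close this gap with a continuity and quantitative argument using the explicit formula \cref{eq:mappingSpatial}. Write $z=x+w$ with $x$ real and $w=\mathrm{Im}\,z$, so $z-\conj{z}=2w$. Then $\abs{2q-i}=\abs{z-i}\,\abs{\conj{z}+i}/(2\abs{w})=\abs{z-i}^2/(2\abs{w})$, using $\abs{\conj{z}+i}=\abs{z-i}$.

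This gives both directions. If $w\to0$ with $z$ bounded, then $\abs{z-i}^2\ge x^2+1-2\abs{w}$ stays bounded below, so $\abs{q}\to\infty$. Conversely, $\abs{q}\to\infty$ forces $\abs{z-i}^2/\abs{w}\to\infty$. Since $\abs{z-i}^2\le (\abs{z}+1)^2$, this means $\abs{w}/(1+\abs{z})^2\to0$, that is, $z$ approaches $\inftyWA$ in the chordal sense (including $z\to\infty$, which is the point $\infty$ adjoined to the real line). This identity is expected to make the limit statement routine.
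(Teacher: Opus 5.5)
Your proof is correct. Its forward half is the paper's proof: the paper takes $z=a\in\R$, computes $\q{2}=i-2(a-i)^{-1}$, notes it is a unit element of $\R+\R i$, so the KS step gives $\q{1}=i$ and the outer M\"obius map sends this to $\infty$.

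What you add is the converse and a genuine limit statement. Your description of the KS fiber over $i$ (via $\abs{\q{2}}=1$ and the centralizer of $i$), combined with the involutive first M\"obius map, shows that nothing other than $\inftyWA\cup\{\infty\}$ is sent to physical infinity. Your identity $\abs{2q-i}=\abs{z-i}^2/(2\abs{w})$ then makes the correspondence quantitative in both directions; it is the paper's later relation $\abs{q-\tfrac{i}{2}}=\chi^{(-)}/(4\varrho)$ in \cref{eq:intermediate-terms}.

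This matters because $\inftyWA$ lies outside the domain of \cref{eq:mappingSpatial}. ``Corresponds to infinity'' therefore only has a precise meaning via limits or via the extended composite map. The paper's short computation suffices for how the lemma is used, namely to argue that the singular set of $K$ and of \cref{eq:EOM-regular} lies at physical infinity. Your version also shows that $z\to\infty$ in $\HH$ is sent to physical infinity, e.g.\ $z=ti$ with $t\to\infty$, along which $\varrho\to\infty$. The paper does not mention this case, and it bears on the small-$\varrho$ escape test in \cref{alg:WA-propagation}, since escape need not make $\varrho$ small.

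Two small corrections:

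\begin{enumerate}
\item The chain of equivalences you display at the start is false at its last link as written. For $z=ti$ with $t\to\infty$, one has $\abs{q}\to\infty$ but $z-\conj{z}\not\to 0$. The chain should end with ``$z$ approaches $\inftyWA\cup\{\infty\}$'', which is what your final paragraph actually proves.
\item In the forward check, $\q{2}=i-2(x-i)^{-1}=\bigl(-2x+(x^2-1)i\bigr)/(x^2+1)$, which is $i$ times the expression you wrote. It is still a unit element of $\R+\R i$, and this corrected form is the one consistent with your inverse formula $z=-c/(1-s)$.
\end{enumerate}
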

\begin{proof}
We show that the WA mapping sends $\inftyWA$ to $\infty$ in $\IH $.
First, take any $z$ in $\inftyWA$ and denote it by $z_0$.
Any such $z_0$ satisfies $z_0 - \conj{z_0} = 0$, which implies that $z_0 \in\R \subset \mathbb{H} $.
In other words, the set $\inftyWA $ defined in \cref{eq:singular-set} corresponds to $\{z \in \R \}$. 
Then, calculating the M\"obius map: $z\mapsto\q{2} $ for $z_0 \defeq a \in\R $ necessarily yields $\q{2} = \exp(i\theta)$ for some $\theta \in\R$ because
\begin{align}
\q{2} = i - 2(a - i)^{-1} = \frac{a^2-1}{a^2 + 1} i - \frac{2a}{a^2+1}
\quad \text{and}\quad
\abs{\q{2}} = \abs{\frac{a^2-1}{a^2 + 1} i - \frac{2a}{a^2+1}} = 1
\end{align}
For any $\q{2} = \exp(i\theta) $, the KS map gives:
$\q{1} = \conj{\q{2}} i\ \q{2} = (\cos{\theta} - i\sin{\theta})i(\cos{\theta} + i\sin{\theta}) = i$.
Lastly, the M\"obius map: $\q{1}\mapsto q $ for $\q{1} = i$ yields $\infty$ in $\IH $, and hence $\infty$ in $\R^3$.
\end{proof}

\begin{remark}\label{remark:p-singularity}
While the WA coordinate transformation is regular on $\mathbb{H}\setminus\inftyWA$ as shown in \cref{theorem:regularity-WAmap}, the momenta map $(z, \xi) \mapsto p $ given in \cref{eq:mappingSpatialMomenta} is singular when $z =\pm i $, i.e., when the particle collides with one of the primaries.
Importantly, however, it does not preclude the Hamiltonian and associated dynamical flows from being regularized.
Indeed, \cref{sec:regularized_hamiltonian} introduces a Hamiltonian in the WA space with time regularization and proves their regularity.
Thus, while regularization enables propagating the dynamics through collisions and calculating smooth trajectories of $z(s)$ and $\xi(s)$, we cannot recover the physical momenta, $p$, at collision epochs.
This fact makes physical sense too;
at a collision, the time transformation $\diff t/ \diff s = \eta(q) = 0$, and hence, conversion of the finite $s$-time motion to physical-time velocity is singular.
\end{remark}

\subsection{Vector-Matrix Expression}

To calculate the generalized forces $\hat{\bm{G}}, \hat{G}_0 $ in \cref{eq:canonical-force-regular}, we need to take a partial derivative of $\bm{q}$ with respect to $\bm{z} $ (see \cref{eq:B-matrix}).
On the other hand, the WA transformation given in \cref{eq:WA-transformation} is expressed in quaternionic form, which is not convenient for taking the partial derivatives.

Thus, let us derive the vector-matrix form of the WA transformation \cref{eq:WA-transformation}, expressing $(\bm{q}, \bm{p})$ in terms of $(\bm{z}, \bm{\xi})$.
To that end, define the following terms for compact notation:
\begin{align}
\begin{aligned}
\varrho(z) &\defeq \frac{1}{2}\abs{z-\conj{z}} = \sqrt{z_1^2 + z_2^2 + z_3^2},
\quad
\chi^{(\pm)}(z) \defeq \abs{z\pm i}^2 = (z_1 \pm 1)^2 + z_2^2 + z_3^2 + z_4^2.
\end{aligned}
\label{eq:WA-explicit-aux}
\end{align}
Physically, $\varrho$ represents the distance from the real axis in the quaternion space (hence the distance from the infinity in the physical space; see \cref{theorem:singularSet}) while $\chi^{(\pm)}$ denote the squared distance from one of the two primaries ($\chi^{(+)}$: squared distance from $P_1$ and $\chi^{(-)}$: $P_2$).
These terms enable us to compactly write some key terms, including
\begin{align}
\abs{q\pm\frac{i}{2}}
= \frac{\chi^{(\pm)}}{4\varrho}, \quad
\abs{p}^2
= \frac{16\varrho^4}{\chi^{(-)}\chi^{(+)}}\abs{\xi}^2,
\label{eq:intermediate-terms}
\end{align}
which are easy to derive using the WA map in \cref{eq:WA-transformation} and multiplicativity of the quaternion norm.
Using these terms,
\cref{theorem:WA-transform-explicit} gives the vector-matrix form of \cref{eq:WA-transformation}.

\begin{lemma}
\label{theorem:WA-transform-explicit}
Noting $\bm{q} = \vecIH{(q)}$, $\bm{p} = \vecIH{(p)}$, $\bm{z} = \vecH{(z)}$, and $\bm{\xi} = \vecH{(\xi)}$, the WA transformation \cref{eq:WA-transformation} can be equivalently expressed in vector-matrix form as follows:
\begin{subequations}
\begin{align}
\bm{q} &=
\frac{1}{4\varrho^2}
\begin{bmatrix}
\varrho^2 + z_4^2 + 1 & 0 & 0 & 0 \\
0 & \varrho^2 + z_4^2 - 1 & 2z_4 & 0 \\
0 & -2z_4 & \varrho^2 + z_4^2 - 1 & 0
\end{bmatrix}
\bm{z}
\label{eq:coordinates-explicit}
\\
\bm{p} &= \frac{4}{\chi^{(-)} \chi^{(+)}} M \bm{\xi}
\label{eq:momenta-explicit}
\end{align}
\end{subequations}
where $M\in\R^{3\times 4}$, and its $(i,j)$-elements, $M_{ij}$, are given by
\begin{align}
\begin{aligned}
M_{11} &= \varrho^4 + (z_4^2 + 1)(z_2^2 + z_3^2 - z_1^2),
&
M_{12} &= - 2 z_1 z_2 (z_4^2 + 1),
\\
M_{13} &= - 2 z_1 z_3 (z_4^2 + 1),
&
M_{14} &= 2 z_1 z_4 \varrho^2,
\\
M_{21} &= 2 z_1 \left[z_2(1 - z_4^2) - 2 z_3 z_4 \right],
&
M_{22} &= \varrho^4 + (z_4^2 - 1)(\varrho^2 - 2 z_2^2) - 4 z_2 z_3 z_4,
\\
M_{23} &= 2\left[z_2 z_3(1 - z_4^2) + z_4(\varrho^2 - 2 z_3^2)\right],
&
M_{24} &= 2 \varrho^2 (z_2 z_4 + z_3),
\\
M_{31} &= 2 z_1 \left[z_3(1 - z_4^2) + 2 z_2 z_4 \right],
&
M_{32} &= 2\left[z_2 z_3(1 - z_4^2) - z_4(\varrho^2 - 2 z_2^2)\right],
\\
M_{33} &= \varrho^4 + (z_4^2 - 1)(\varrho^2 - 2 z_3^2) + 4 z_2 z_3 z_4,
&
M_{34} &= 2 \varrho^2 (z_3 z_4 - z_2).
\end{aligned}
\label{eq:momenta-explicit-coeff}
\end{align}
\end{lemma}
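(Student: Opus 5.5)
We plan to prove \cref{theorem:WA-transform-explicit} by direct expansion of the single-shot quaternionic formulas \cref{eq:mappingSpatial,eq:mappingSpatialMomenta} of \cref{theorem:single-shot-WA}, using the abbreviations \cref{eq:WA-explicit-aux}. Write $z = w + z_4$ with $w = z_1 i + z_2 j + z_3 k \in\IH$. Then $z - \conj{z} = 2w$ and, since $w^2 = -\abs{w}^2 = -\varrho^2$, we get $(z-\conj{z})^{-1} = -w/(2\varrho^2)$. This is the key simplification: every inverse in \cref{eq:WA-transformation} except $(z-i)^{-1}$ becomes a polynomial expression divided by a power of $\varrho$.

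\textbf{Coordinates.} We substitute into \cref{eq:mappingSpatial} to get
\[
q = \tfrac12\left[i + \tfrac{1}{2\varrho^2}(w + z_4 - i)\,w\,(z_4 - w + i)\right].
\]
We expand the triple product using $w^2=-\varrho^2$, $iw + wi = -2z_1$ (the anticommutator of pure imaginaries is $-2\langle\cdot,\cdot\rangle_{\R}$), and $wiw = -\varrho^2\,\bar{}$-type reflection identities. Concretely, $(w+z_4-i)w(z_4-w+i)$ splits into terms of degree $0$--$3$ in $w$. We collect them into real multiples of $w$, $i$, $iwi$, and $z_4(iw - wi)$, where $iw - wi = 2\, i\times w$. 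The reflection $iwi = -w + 2 z_1 i$ (more precisely $iwi = w - 2z_1 i$ up to sign, which we will check) accounts for the asymmetric diagonal $\varrho^2+z_4^2\pm1$ in the first row versus rows 2--3. The cross product $i\times w = -z_3 j + z_2 k$ produces the $\pm 2z_4$ off-diagonal entries. Reading off the $i,j,k$ components then gives \cref{eq:coordinates-explicit}. A sanity check is that the constant $i/2$ must be absorbed by the $z_1$-row, via the $\varrho^2$ terms.

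\textbf{Momenta.} For \cref{eq:mappingSpatialMomenta}, we first compute the denominator using \cref{eq:intermediate-terms}:
\[
\abs{2q - i}\abs{2q+i} = 4\abs{q-\tfrac i2}\abs{q+\tfrac i2} = \frac{\chi^{(-)}\chi^{(+)}}{4\varrho^2}.
\]
In the numerator, we write $(z-i)^{-1} = (\conj{z}+i)/\chi^{(-)}$. We also note $(\conj{z}-i)(\conj{z}+i) = \conj{z}^2 + 1$, which is a quaternion in which $\conj z$ commutes with itself.

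The numerator becomes
\[
\frac{1}{2\varrho^2}\,(z-i)\,w\,(\conj{z}^2+1)\,\xi\,(2w)(\conj{z}+i)\cdot\frac{1}{\chi^{(-)}}.
\]
We expect an accompanying factor $\chi^{(+)}$ to cancel via $\abs{z-i}^2$ versus $\abs{z+i}^2$ bookkeeping. The $\varrho^2$ factors then cancel, leaving $p = \frac{4}{\chi^{(-)}\chi^{(+)}}\,A\,\xi\,B$ with $A,B$ explicit quaternion polynomials in $z$. The real-linear map $\xi\mapsto A\xi B$ has a $4\times4$ real matrix, given by the product of the left-multiplication matrix of $A$ and the right-multiplication matrix of $B$. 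Its last ($4$th, real) row vanishes exactly on $\admsWA$, so we drop that row to obtain $M$.

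\textbf{Main obstacle.} The main obstacle is this momentum step. The difficulty is correctly grouping the factors so that the degree-$8$ polynomial entries reduce to the compact forms in \cref{eq:momenta-explicit-coeff}, with $\varrho^4$ and $z_4^2\pm1$ appearing.

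Our plan is to simplify $A$ and $B$ before multiplying out. Since $z$ and $\conj{z}$ commute, $(\conj z^2+1)$ can be moved freely against $(\conj z + i)$-free factors only partially, so we will instead first try to show $B$ is proportional to $w(\conj z+i)$ and $A$ to $(z-i)w(\conj z^2+1)$. We then verify a few entries (e.g.\ $M_{14}=2z_1z_4\varrho^2$ and $M_{11}$) by hand. If that proves unwieldy, we fall back to a symbolic computation of the $3\times4$ block, which is routine in principle.

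Finally, we check that the real part of $A\xi B$ equals a multiple of $\Ree[(\conj z+i)\xi(\conj z-i)]$. This confirms consistency with \cref{theorem:WAdmissibleCharacterization}.
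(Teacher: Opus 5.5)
Your route is the paper's: substitute $(z-\bar z)^{-1}=-w/(2\varrho^2)$ and $(z-i)^{-1}=(\bar z+i)/\chi^{(-)}$ into \cref{eq:mappingSpatial,eq:mappingSpatialMomenta}, expand, read off the coefficients, and kill $\Ree(p)$ using $\Gamma=0$.

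\textbf{Coordinates.} This half goes through. The correct sign is $iwi=w-2z_1i$, and then $(w+z_4-i)w(z_4-w+i)=(\varrho^2+z_4^2-1)w+2(z_1-\varrho^2)i+2z_4(z_3j-z_2k)$. The $i/2$ cancels as you predicted.

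\textbf{Momenta: a false identity.} The momentum half rests on $(\bar z-i)(\bar z+i)=\bar z^2+1$, which is false because $i$ does not commute with $\bar z$. In fact
\begin{align*}
(\bar z-i)(\bar z+i)=\bar z^2+1+(\bar z i-i\bar z),\qquad \bar z i-i\bar z=iw-wi=2(z_2k-z_3j).
\end{align*}
At $z=j$ the left side is $2k$, while $\bar z^2+1=0$. Your $A$ would therefore make $p$ vanish for every $\xi$ at $z=j$. But there \cref{eq:momenta-explicit-coeff} gives $M_{11}=M_{22}=2$ with prefactor $4/(\chi^{(-)}\chi^{(+)})=1$. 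For instance, the admissible $\xi=i$ gives $p=i\neq0$. So the entries you read off cannot match.

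\textbf{Momenta: prefactor bookkeeping.} After inserting $\abs{2q-i}\abs{2q+i}=\chi^{(-)}\chi^{(+)}/(4\varrho^2)$, the $\varrho^2$ factors cancel and you are left with
\begin{align*}
p=\frac{4}{[\chi^{(-)}]^2\chi^{(+)}}(z-i)w(\bar z-i)(\bar z+i)\,\xi\,w(\bar z+i),
\end{align*}
which is the paper's \cref{eq:momenta-explicit-reduced}. No $\chi^{(+)}$ cancels. Instead, an extra factor $\chi^{(-)}$ must be extracted from a degree-six numerator to reach the degree-four entries of $M$, and your plan does not say where it comes from.

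\textbf{The fix.} One observation repairs both points. The quaternion $\bar z^2+1=(z_4^2-\varrho^2+1)-2z_4w$ commutes with $w$. The quaternion $iw-wi=2\,i\times w$ is orthogonal to $w$, hence anticommutes with it. Therefore $w(\bar z-i)(\bar z+i)=(\bar z+i)(\bar z-i)w$, which gives $(z-i)w(\bar z-i)(\bar z+i)=(z-i)(\bar z+i)(\bar z-i)w=\chi^{(-)}(\bar z-i)w$, and so
\begin{align*}
p=\frac{4}{\chi^{(-)}\chi^{(+)}}\,(\bar z-i)w\;\xi\;w(\bar z+i).
\end{align*}
Now $A=(\bar z-i)w$ and $B=w(\bar z+i)$ are quadratic. $M$ is the imaginary $3\times4$ block of the product of the left-multiplication matrix of $A$ and the right-multiplication matrix of $B$.

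Two quick checks confirm this. Taking $\xi=1$ gives $-\varrho^2\,\mathrm{Im}[(\bar z-i)(\bar z+i)]=2\varrho^2[z_1z_4\,i+(z_2z_4+z_3)j+(z_3z_4-z_2)k]$, which is exactly the fourth column of $M$. Cyclicity of $\Ree$ together with the same commutation relation gives $\Ree[(\bar z-i)w\xi w(\bar z+i)]=-\varrho^2\,\Gamma(z,\xi)$, matching \cref{eq:p4-verification}.

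With this correction your plan works and is tidier than the paper's proof, which expands the degree-six product and lets the symbolic expansion absorb the $\chi^{(-)}$ implicitly. Alternatively, keep $(\bar z-i)(\bar z+i)$ unsimplified, as the paper does, and let your computer-algebra fallback do the division.
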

\begin{proof}
See \cref{proof:WA-transform-explicit}.
\end{proof}

\section{Regularized Hamiltonian}
\label{sec:regularized_hamiltonian}

Now, let us calculate the Hamiltonian under the WA transformation combined with the time transformation.
We can express the time-transformed ER3BP Hamiltonian \cref{eq:timeHamiltonian} using the quaternionic variables $(q_0,p_0,q,p)$ as:
\begin{align}
\begin{aligned}
\hat{H}_e(q_0,p_0,q,p) = &\
\eta(q) \left(\frac{1}{2}\abs{p}^2 + \omega_z(q_0)(q_2 p_1 - q_1p_2) - l_0 q_1 + p_0\right) + \eta(q)V(q) 
\label{eq:Hamiltonian-original}
\end{aligned}
\end{align}
where recall that $q_0 = t$, $q = q_1 i + q_2 j + q_3 k $, and $p = p_1 i + p_2 j + p_3 k $.
As discussed in \cref{sec:time-transform}, \cref{eq:Hamiltonian-original} still has singularities before applying the WA transformation to $(q,p)$.

Substituting the WA mapping given in \cref{theorem:single-shot-WA} into \cref{eq:Hamiltonian-original} yields the Hamiltonian in terms of the WA state $(z,\xi)$, denoted by $K(q_0, p_0, z, \xi)$, as summarized in \cref{theorem:regularHamiltonian-explicit}.

\begin{theorem}\label{theorem:regularHamiltonian-explicit}
Expressing \cref{eq:Hamiltonian-original} in terms of the WA state $(z,\xi)$ yields the Hamiltonian in the following form:
\begin{align}
\begin{aligned}
K(q_0, p_0, z, \xi) =&
\underbrace{\frac{1}{2}\varrho^2|\xi|^2}
_{\substack{\mathrm{regularized}\\\mathrm{kinetic\ energy}}}
+
\underbrace{\eta_{\mathrm{WA}}(p_0-l_0q_1)}
_{\substack{\mathrm{time-scaled}\\\mathrm{extended\ energy}}}
+
\underbrace{\omega_zJ_{\mathrm{WA}}}
_{\substack{\mathrm{rotation}\\\mathrm{term}}}
+
\underbrace{U_{\mathrm{WA}}}
_{\substack{\mathrm{two-primary}\\\mathrm{potential}}}
\label{eq:regularHamiltonian}
\end{aligned}
\end{align}
where 
$\eta_{\mathrm{WA}} \defeq \eta \circ \mathcal{WA}_q $,
$J_{\mathrm{WA}} \defeq [\eta(q) (q_2 p_1 - q_1p_2)] \circ \mathcal{WA} $,
and
$U_{\mathrm{WA}} \defeq  (\eta V) \circ  \mathcal{WA}_q $, given by
\begin{align}
\begin{aligned}
\eta_{\mathrm{WA}}(z) & = \frac{\chi^{(-)}(z)\chi^{(+)}(z)}{16\varrho(z)^2},
\\
J_{\mathrm{WA}}(z,\xi) & =
-\frac{1}{32\varrho(z)^2} \Ree\left\{k\left[z(\bar z-i)(\bar z+i)-i\chi^{(+)}(z)\right]\xi(z-\bar z)(\bar z+i)\right\},
\\
U_{\mathrm{WA}}(z) & = - \frac{(1-\mu)\chi^{(-)}(z)+\mu\chi^{(+)}(z)}{4\varrho(z)}
\end{aligned}
\label{eq:physical-terms-in-WA}
\end{align}
with $\varrho(\cdot)$ and $\chi^{(\pm)}(\cdot)$ defined in \cref{eq:WA-explicit-aux}.
\end{theorem}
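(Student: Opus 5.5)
The proof substitutes the single-shot WA map of \cref{theorem:single-shot-WA} into \cref{eq:Hamiltonian-original}, one term at a time. The Hamiltonian splits as $\eta\cdot\tfrac12|p|^2 + \eta(p_0 - l_0 q_1) + \omega_z\,\eta(q_2p_1-q_1p_2) + \eta V$. Each piece is handled by the norm identities \cref{eq:intermediate-terms}, which use only multiplicativity of the quaternion norm \cref{eq:quatAbsMultiplicative}. The rotation term is the exception and is written as a real part of a quaternion product.

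\textbf{Time factor.} Since $r_{13} = |q + i/2|$ and $r_{23} = |q - i/2|$, \cref{eq:intermediate-terms} gives
\[
\eta_{\mathrm{WA}} = \frac{\chi^{(+)}}{4\varrho}\cdot\frac{\chi^{(-)}}{4\varrho} = \frac{\chi^{(-)}\chi^{(+)}}{16\varrho^2}.
\]
To verify $|q\pm i/2|$ itself, I would use the form $2q\mp i = -(z-i)(z-\bar z)^{-1}(\bar z+i) + (1\mp1)i$ from \cref{eq:mappingSpatial}. For one sign this is a pure product, with norm $|z-i||z+i|/(2\varrho)$. For the other sign I would rewrite it through the analogous factorization obtained via the identity \cref{eq:LM4-temp1}, giving $|z-i|^2/(2\varrho)$ and its mirror, then divide by $2$. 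This fixes which of $\chi^{(\pm)}$ goes with which primary.

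\textbf{Kinetic and potential terms.} The kinetic term follows directly:
\[
\tfrac12\,\eta\,|p|^2 = \tfrac12\cdot\frac{\chi^{(-)}\chi^{(+)}}{16\varrho^2}\cdot\frac{16\varrho^4}{\chi^{(-)}\chi^{(+)}}\,|\xi|^2 = \tfrac12\varrho^2|\xi|^2.
\]
The product $\chi^{(-)}\chi^{(+)}$ cancels exactly, which is the regularization. The potential term is $\eta V = -(1-\mu)\, r_{23} - \mu\, r_{13}$. Substituting $r_{23} = \chi^{(-)}/(4\varrho)$ and $r_{13} = \chi^{(+)}/(4\varrho)$ gives $U_{\mathrm{WA}}$. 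The term $\eta_{\mathrm{WA}}(p_0 - l_0 q_1)$ is left as a composition, with $q_1$ read off from \cref{eq:coordinates-explicit}.

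\textbf{Rotation term (main obstacle).} First, $q_2p_1 - q_1p_2$ is the $\hat{\bm k}$-component of $\bm q\times\bm p$ with a minus sign. For $q,p\in\IH$ we have $qp = -\bm q\cdot\bm p + \bm q\times\bm p$, so
\[
q_2p_1 - q_1p_2 = -(\bm q\times\bm p)_3 = \Ree(k\,qp).
\]
Next I would substitute $q$ from \cref{eq:mappingSpatial} and $p$ from \cref{eq:mappingSpatialMomenta} and multiply by $\eta_{\mathrm{WA}}$. The denominator of $p$ is $|2q-i||2q+i| = 4\eta$, so $\eta$ cancels and leaves a factor $1/4$.

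Then I would simplify $2q\,(z-i)(z-\bar z)^{-1}(\bar z - i)(\bar z+i)$ and use cyclic invariance of $\Ree$ to move factors to the right. The inverses $(z-i)^{-1}$ and $(z-\bar z)^{-1}$ are removed using $x^{-1} = \bar x/|x|^2$, with $z - \bar z$ pure imaginary of norm $2\varrho$. The aim is the stated form
\[
-\frac{1}{32\varrho^2}\,\Ree\bigl\{k[z(\bar z - i)(\bar z+i) - i\chi^{(+)}]\,\xi\,(z-\bar z)(\bar z + i)\bigr\}.
\]
The step I expect to be delicate is combining the $i$ in $2q = i - (\cdots)$ with the remaining factors into the bracket. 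I would expand $i(z-i)(z-\bar z)^{-1}$ against $(z-i)(z-\bar z)^{-1}(\bar z - i)(\bar z + i)$, clear $(z-\bar z)^{-1}$ on both sides via a common factor $(z-\bar z)/|z-\bar z|^2$, and use $(z-i)(\bar z+i) = \chi^{(-)}\ldots$-type norm identities to isolate the $-i\chi^{(+)}$ piece. I expect this to need checking against the explicit matrix \cref{eq:momenta-explicit-coeff} if the ordering is ambiguous.
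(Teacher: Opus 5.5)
Your proposal follows the paper's proof essentially step for step. You use \cref{eq:intermediate-terms} for $\eta_{\mathrm{WA}}$, the kinetic term and $U_{\mathrm{WA}}$; then $q_2p_1-q_1p_2=\Ree(kqp)$; then cancel $\eta$ against $|2q-i||2q+i|$ to leave $-\tfrac14$; then substitute \cref{eq:mappingSpatial} and apply $(\bar z+i)(z-i)=\chi^{(-)}$ and $(z+i)(\bar z-i)=\chi^{(+)}$. The bracket you call delicate closes via $(z-i)(z-\bar z)+\chi^{(-)}=(z-i)(z+i)$, so no check against $M$ is needed.

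There is one slip in your optional verification of \cref{eq:intermediate-terms}. Since $|\bar z+i|=|z-i|$, the pure-product sign gives $|2q-i|=\chi^{(-)}/(2\varrho)$, not $|z-i||z+i|/(2\varrho)$. The other sign gives $|2q+i|=\chi^{(+)}/(2\varrho)$, which follows from $2q+i=2(\q{1}-i)^{-1}\q{1}i$ and $|\q{1}|=|\q{2}|^2=\chi^{(+)}/\chi^{(-)}$. This is the assignment your later substitutions correctly use.
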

\begin{proof}
See \cref{proof:regularHamiltonian-explicit}.
\end{proof}

\subsection{Regularity of the Hamiltonian}

Now that the ER3BP Hamiltonian in $(z,\xi)$ is explicitly given in \cref{eq:regularHamiltonian}, we are ready to formally confirm its regularity.
Denote an interval of $q_0(=t)$ by $\mathcal I$ and its $s$-counterpart by $\mathcal{I}_s $ (where $q_0$ is parameterized by $s$):
\begin{align}
\mathcal{I} \defeq \{q_0 \mid q_0\in[t_0, t_f]  \},
\quad
\mathcal{I}_s \defeq \{s \mid q_0(s) \in \mathcal{I} \} .
\label{eq:interval}
\end{align}
\cref{theorem:regularHamiltonianRegularity} analyzes the regularity of the Hamiltonian given by \cref{eq:regularHamiltonian} on an arbitrary interval $\mathcal{I}$, which clarifies that the Hamiltonian is $C^{\infty}$ except on a set $\inftyWA$ defined in \cref{eq:singular-set}.

\begin{theorem}
\label{theorem:regularHamiltonianRegularity}
The Hamiltonian $K$ given by \cref{eq:regularHamiltonian} is a $C^{\infty}$ function of $(q_0,p_0, z ,\xi)$ on
\begin{align}
\mathcal I\times \R  \times (\mathbb{H}\setminus\inftyWA) \times \mathbb{H},
\label{eq:regular-domain}
\end{align}
where $\inftyWA$ is defined in \cref{eq:singular-set}.
\end{theorem}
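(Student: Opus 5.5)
We will prove \cref{theorem:regularHamiltonianRegularity} term by term, using the explicit form of $K$ in \cref{theorem:regularHamiltonian-explicit}. Each of the four terms in \cref{eq:regularHamiltonian} is a finite sum and product of three kinds of functions:
\begin{itemize}
\item real polynomials in the components of $(z,\xi)$;
\item the scalars $\varrho^{-1}$ and $\varrho^{-2}$;
\item the function $\omega_z(q_0)$.
\end{itemize}
Products and sums of $C^\infty$ functions are $C^\infty$, so regularity of $K$ on \cref{eq:regular-domain} reduces to showing that each of these building blocks is $C^\infty$ there. This is the same strategy as in the proof of \cref{theorem:regularity-WAmap}.

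\textbf{Building blocks.} First, $\chi^{(\pm)}(z)=(z_1\pm1)^2+z_2^2+z_3^2+z_4^2$ and $\varrho(z)^2=z_1^2+z_2^2+z_3^2$ are polynomials. Next, on $\HH\setminus\inftyWA$ we have $\varrho=\tfrac12\abs{z-\conj z}>0$. Since $\varrho^2$ is a positive polynomial there, $\varrho=\sqrt{\varrho^2}$ is $C^\infty$, because $\sqrt{\cdot}$ is smooth on $(0,\infty)$. The reciprocals $\varrho^{-1}$ and $\varrho^{-2}$ are then $C^\infty$ as well.

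\textbf{The four terms.}
\begin{itemize}
\item The kinetic term $\tfrac12\varrho^2\abs{\xi}^2$ is a polynomial in $(z,\xi)$.
\item In the time-scaled term, $\eta_{\mathrm{WA}}=\chi^{(-)}\chi^{(+)}/(16\varrho^2)$ is smooth. It multiplies $p_0$, which is linear, and $q_1$, which by \cref{theorem:regularity-WAmap} (or explicitly by \cref{eq:coordinates-explicit}) is a polynomial in $z$ times $\varrho^{-2}$.
\item For $U_{\mathrm{WA}}$, the numerator is a polynomial and the denominator is $4\varrho$, so the term is smooth.
\item For $J_{\mathrm{WA}}$, the expression inside $\Ree\{\cdot\}$ is a finite product of the quaternions $k$, $z$, $\conj z\pm i$, $\xi$, $z-\conj z$, and the real scalar $\chi^{(+)}$. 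Quaternion multiplication is real-bilinear, so each component of this product is a polynomial in $(z,\xi)$. Taking the real part keeps it a polynomial, and the prefactor $\varrho^{-2}$ is smooth.
\end{itemize}

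\textbf{The time dependence.} The remaining point is the factor $\omega_z(q_0)=\sqrt{1+e\cos\nu(q_0)}$, which is the step that needs the most care. For $0\le e<1$ the primaries' Keplerian true anomaly $\nu(t)$ is a real-analytic function of time, since the motion is a regular, non-collision Kepler orbit. Moreover $1+e\cos\nu\ge1-e>0$, so the square root is smooth. Hence $\omega_z$ is $C^\infty$ on $\mathcal I$; in the CR3BP it is simply constant.

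Combining these facts, $K$ is $C^\infty$ on $\mathcal I\times\R\times(\HH\setminus\inftyWA)\times\HH$.

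There is an apparent obstacle: the expressions for $J_{\mathrm{WA}}$ and $p$ in \cref{eq:mappingSpatialMomenta} contain denominators $\chi^{(\pm)}$ that vanish at the collisions $z=\pm i$ (cf.\ \cref{remark:p-singularity}). The plan relies on the fact that \cref{theorem:regularHamiltonian-explicit} has already cancelled these denominators against the factor $\eta$, leaving only $\varrho$ in the denominators. So in the proof we will emphasize explicitly that no $\chi^{(\pm)}$ appears in any denominator of \cref{eq:physical-terms-in-WA}. This is what makes the collision points regular.
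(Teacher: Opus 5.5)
Your proposal is correct and follows essentially the same route as the paper's proof. Both decompose $K$ term by term into finite sums and products of polynomials in $(z,\xi)$, negative powers of $\varrho>0$ on $\mathbb{H}\setminus\inftyWA$, and the smooth factor $\omega_z(q_0)$. Your explicit remarks that $\varrho=\sqrt{\varrho^2}$ is smooth only because $\varrho^2>0$ off $\inftyWA$, and that no $\chi^{(\pm)}$ appears in any denominator of \cref{eq:physical-terms-in-WA}, state precisely points the paper leaves implicit.
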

\begin{proof}
We show that \cref{eq:regularHamiltonian} is finite sums and products of $C^{\infty}$ functions on the domain defined by \cref{eq:regular-domain}.
First, the following are clear:
$\varrho$ and $\chi^{(\pm)}$ are all $C^{\infty}$ in $z$, and $\abs{z-\conj{z}}^2 > 0$ on $\mathbb{H}\setminus\inftyWA$, $\varrho > 0$.
Thus, $\eta_{\mathrm{WA}} $ and $U_{\mathrm{WA}} $ are both $C^{\infty}$ on the domain \cref{eq:regular-domain}.
It is also obvious that $\abs{\xi}^2$ is $C^{\infty}$ in $\xi$.

Then, it remains to show that $q_1$, $p_0$, $\omega_z(q_0)$, and $J_{\mathrm{WA}} $ are all regular.
As shown in \cref{theorem:regularity-WAmap}, the map $z \mapsto q$ given in \cref{eq:mappingSpatial} is $C^\infty$ on $\mathbb{H}\setminus\inftyWA$, and therefore its first coordinate $q_1$ is also $C^\infty$.
The coordinate $p_0$ enters linearly in \cref{eq:regularHamiltonian}.
$\omega_z(q_0)$ is $C^\infty$ on an arbitrary $\mathcal I$ for the following reason:
\cref{eq:inertial-position} and the relation $\omega_z(t) = \sqrt{1 + e\cos{(\nu(t))}}$ imply that $\omega_z$ is a $C^\infty$ function on $\mathcal I$ because $\nu(t)$ is smooth and $1 + e\cos(\nu(t)) > 0$ for elliptic motion with $0 \le e < 1$.
$J_{\mathrm{WA}} $ is also $C^{\infty}$ in $(z_i,\xi_i)$ because it consists of finite sums and multiplications of quaternions and $\chi^{(\pm)}$, as well as of division by $\varrho(>0)$. 

Consequently, \cref{eq:regularHamiltonian} is $C^\infty$ on the domain \cref{eq:regular-domain}, completing the proof.
\end{proof}

\cref{theorem:regularHamiltonianRegularity} shows that the only possible loss of regularity in \cref{eq:regularHamiltonian} occurs on the set $\inftyWA$.
However, as shown in \cref{theorem:singularSet}, \cref{eq:regularHamiltonian} is singular only when the particle goes to infinity, which is not of practical interest.
Hence, \cref{theorem:regularHamiltonianRegularity,theorem:singularSet} together conclude that the transformed Hamiltonian \cref{eq:regularHamiltonian} is regular for any trajectories of practical interest.

\subsection{Hamilton's Equation}

Let us now derive the equations of motion in the WA space and analyze their regularity.
For notational clarity, let $\bm{b}_{\mathrm{WA}}: \R \times \R^4 \times \R^4\mapsto \R^3 $ represent $\bm{b}_{\mathrm{WA}} \defeq \bm{b} \circ \vecIH \circ \mathcal{WA} \circ \quat $;
explicitly,
\begin{align}
\bm{b}_{\mathrm{WA}}(q_0, \bm{z}, \bm{\xi}) \defeq \bm{b}\left(q_0, \vecIH[\mathcal{WA}_q(\quat{\,\bm z})], \vecIH[\mathcal{WA}_p(\quat{\,\bm z}, \quat{\,\bm \xi})]\right)
\end{align}
where recall \cref{eq:WA-transformation,eq:quaternion-map,eq:b-vector} for $\vecIH(\cdot)$ and $\quat{(\cdot)}$, $\bm{b}(\cdot)$, and the WA mapping, respectively.
Applying the Hamilton's equation to $K(\cdot)$ in \cref{eq:regularHamiltonian} yields
\begin{align}
\begin{aligned}
\frac{\diff \bm{z}}{\diff s} 
=& \frac{\partial K}{\partial \bm{\xi}}
,\qquad
\frac{\diff \bm{\xi}}{\diff s} 
= - \frac{\partial K}{\partial \bm{z}} + \hat{\bm{G}}
\qquad
\frac{\diff q_0}{\diff s} 
= \frac{\partial K}{\partial p_0}
,\qquad
\frac{\diff p_0}{\diff s} 
= - \frac{\partial K}{\partial q_0} + \hat{G}_0
\label{eq:EOM-regular}
\end{aligned}
\end{align}
where $\bm{z} = \vecH(z)$ and $\bm{\xi} = \vecH(\xi)$ using the map defined in \cref{eq:quaternion-map}, and $\hat{\bm{G}}$ and $\hat{G}_0$ denote the generalized forces in the WA space due to $\bm{f}\in\R^3 $, given by
\begin{align}
\begin{aligned}
\hat{\bm{G}} 
= \eta_{\mathrm{WA}}(\bm{z})\, [B(\bm{z})]^\top \bm{f} ,\quad
\hat{G}_0  
= - \eta_{\mathrm{WA}}(\bm{z})\, \bm{b}_{\mathrm{WA}}(q_0, \bm{z}, \bm{\xi}) \cdot \bm{f},
\label{eq:canonical-force-regular}
\end{aligned}
\end{align}
where $B: \R^4 \mapsto \R^{3\times 4}$ is defined as:
\begin{align}
B(\bm{z}) \defeq \frac{\partial \bm{q}}{\partial \bm{z}}
\label{eq:B-matrix}
\end{align}
To calculate $B$ in \cref{eq:B-matrix}, a straightforward approach is to use the vector-matrix expression of $\bm{q}$ in terms of $\bm{z}$, provided in \cref{eq:coordinates-explicit}.

Now, \cref{theorem:regular-EOM} formally shows the regularity of the differential equations \cref{eq:EOM-regular}.

\begin{theorem}\label{theorem:regular-EOM}
The differential equations given in \cref{eq:EOM-regular} are regular on the domain \cref{eq:regular-domain}.
\end{theorem}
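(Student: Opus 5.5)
The plan is to show that every term on the right-hand side of \cref{eq:EOM-regular} is a $C^\infty$ (in particular, finite and locally Lipschitz) function of $(q_0,p_0,\bm z,\bm\xi)$ on the domain \cref{eq:regular-domain}. The terms split naturally into two groups. The first group is the Hamiltonian-gradient part $\partial K/\partial\bm\xi$, $\partial K/\partial\bm z$, $\partial K/\partial p_0$, $\partial K/\partial q_0$. The second group is the generalized-force part $\hat{\bm G}$, $\hat G_0$ from \cref{eq:canonical-force-regular}.

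For the first group I will invoke \cref{theorem:regularHamiltonianRegularity} directly. Since $K$ is $C^\infty$ on \cref{eq:regular-domain}, all of its partial derivatives are $C^\infty$ there. This covers $\diff\bm z/\diff s$, $\diff q_0/\diff s=\eta_{\mathrm{WA}}$, $\diff p_0/\diff s$ (apart from $\hat G_0$), and the conservative part of $\diff\bm\xi/\diff s$. It is worth noting that $\eta_{\mathrm{WA}}$ vanishes at $z=\pm i$ but stays smooth, so collisions are interior regular points.

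For $\hat{\bm G}=\eta_{\mathrm{WA}}B^\top\bm f$, I will use \cref{theorem:regularity-WAmap}, or equivalently \cref{eq:coordinates-explicit}, whose entries are polynomials in $\bm z$ divided by $4\varrho^2>0$. This shows that $\bm q(\bm z)$ is $C^\infty$ off $\inftyWA$, so $B=\partial\bm q/\partial\bm z$ is $C^\infty$. The factor $\eta_{\mathrm{WA}}$ is smooth by \cref{eq:physical-terms-in-WA}. The perturbation $\bm f$ is assumed regular as a function of $(t,\bm q,\bm p)$, so $\hat{\bm G}$ is regular by composition. One caveat: if $\bm f$ depends on $\bm p$, I would require the composite $\bm f\circ\mathcal{WA}$ to be regular, and flag this as part of the standing assumption.

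The main obstacle is $\hat G_0=-\eta_{\mathrm{WA}}\,\bm b_{\mathrm{WA}}\cdot\bm f$. The difficulty is that $\bm b_{\mathrm{WA}}$ contains $\bm p=\mathcal{WA}_p(z,\xi)$, which is singular at $z=\pm i$ by \cref{remark:p-singularity}. My approach is to show that the product $\eta_{\mathrm{WA}}\bm p$ is smooth. From \cref{eq:momenta-explicit} we have $\bm p=\frac{4}{\chi^{(-)}\chi^{(+)}}M\bm\xi$, and \cref{eq:physical-terms-in-WA} gives $\eta_{\mathrm{WA}}=\chi^{(-)}\chi^{(+)}/(16\varrho^2)$. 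The singular denominators therefore cancel exactly, yielding
\begin{align}
\eta_{\mathrm{WA}}\,\bm p=\frac{1}{4\varrho^2}M(\bm z)\bm\xi,
\end{align}
whose entries are polynomials in $(\bm z,\bm\xi)$ divided by $\varrho^2>0$. The remaining pieces of $\eta_{\mathrm{WA}}\bm b_{\mathrm{WA}}$ are $\eta_{\mathrm{WA}}\,\omega_z(q_0)(q_2\hat{\bm i}-q_1\hat{\bm j})$. These are smooth, because $\omega_z$ is $C^\infty$ on $\mathcal I$ (as argued in the proof of \cref{theorem:regularHamiltonianRegularity}) and $\bm q(\bm z)$ is smooth. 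Hence $\hat G_0$ is $C^\infty$, provided again that $\bm f$ is regular along the composition.

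Collecting these facts, the vector field in \cref{eq:EOM-regular} is $C^\infty$ on \cref{eq:regular-domain}, which proves the claim. I would close by remarking, via \cref{theorem:singularSet}, that the excluded set $\inftyWA$ corresponds only to physical infinity.
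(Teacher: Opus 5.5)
Your proposal is correct and follows the paper's proof essentially step for step. Smoothness of the $\partial K$ terms comes from \cref{theorem:regularHamiltonianRegularity}, and smoothness of $\hat{\bm G}$ comes from $B=\partial\bm q/\partial\bm z$ together with \cref{theorem:regularity-WAmap}. The key cancellation $\eta_{\mathrm{WA}}\bm p=\frac{1}{4\varrho^2}M(\bm z)\bm\xi$, plus the smooth rotation part, then handles $\hat G_0$. Your caveat is also a sensible refinement of an assumption the paper leaves implicit: $\bm f$ must be regular as a function of the WA state, which is nontrivial if $\bm f$ depends on the physical momentum $\bm p$, itself singular at collisions.
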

\begin{proof}
By \cref{theorem:regularHamiltonianRegularity}, the Hamiltonian $K$ is $C^\infty$ on the domain \cref{eq:regular-domain}.
Hence, its partial derivatives appearing in \cref{eq:EOM-regular} are also $C^\infty$ on that domain.
It remains to consider the generalized-force terms $\hat{\bm G}$ and $\hat G_0$.

First, $\eta_{\mathrm{WA}}$ is $C^\infty$ on $\mathbb H\setminus\inftyWA$ by \cref{theorem:regularHamiltonianRegularity}.
Also, $\bm q=\mathcal{WA}_q(\bm z)$ is $C^\infty$ on this set by \cref{theorem:regularity-WAmap}.
Therefore, $B(\bm z)$ is $C^\infty$, and consequently $\hat{\bm G}$ is $C^\infty$ on the domain.

Next, show that $\hat G_0$ is regular.
Using \cref{eq:momenta-explicit,eq:physical-terms-in-WA,eq:b-vector}, it follows that
\begin{align}
\eta_{\mathrm{WA}}\bm b_{\mathrm{WA}}
=
\frac{1}{4\varrho^2}M(\bm z)\bm\xi
+
\eta_{\mathrm{WA}}(z)\omega_z(q_0)[q_2(z) \hat{\bm{i}} -  q_1(z) \hat{\bm{j}}],
\end{align}
which is $C^\infty$ on $\mathbb H\setminus\inftyWA$ including at collisions with one of the primaries, because $M$ is polynomial in $\bm z$ and $\varrho(z)>0$ on the domain;
note that the singularities of $\bm{p}$ at collisions discussed in \cref{remark:p-singularity} are removed by multiplying $\eta_{\mathrm{WA}}$.
Thus, $\hat G_0 = -\eta_{\mathrm{WA}}\bm b_{\mathrm{WA}}\cdot\bm f $ is also $C^\infty$ on the domain \cref{eq:regular-domain}, completing the proof.
\end{proof}

\subsection{Admissibility of the Dynamical Flow}
Let us now verify that the dynamical flow given by \cref{eq:EOM-regular} preserves the admissibility of trajectories $(z(s),\xi(s))$ in the WA space.
Recall from \cref{remark:hopf-momenta,sec:WA-transformation} that, in order for the WA transformation to always return $(q,p)$ in $\IH\times \IH$ (the original physical space), the WA state $(z,\xi)$ must be in the admissible set $\admsWA$ introduced in \cref{eq:WA-admissibility}.
To that end, \cref{theorem:WA-fiber-momentum} first shows that the following quantity
\begin{align}
\Gamma(z,\xi) \defeq \Ree\left[(\conj{z}+i)\xi(\conj{z}-i)\right]
\label{eq:WA-fiber-quantity}
\end{align}
is a constant of motion under the WA-regularized dynamics.

\begin{lemma}
\label{theorem:WA-fiber-momentum}
$\Gamma(z(s),\xi(s))$ defined in \cref{eq:WA-fiber-quantity} is constant along every solution of \cref{eq:EOM-regular} on the domain \cref{eq:regular-domain}.
\end{lemma}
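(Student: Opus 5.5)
The plan is to treat $\Gamma$ as the momentum map of the symmetry generated by the redundant (Hopf-fiber) direction of $\mathcal{WA}_q$. Then $\diff\Gamma/\diff s=0$ should follow from two facts: a vanishing Poisson bracket $\{\Gamma,K\}=0$, and the generalized forces $\hat{\bm G}$ doing no work along the fiber direction. The $(q_0,p_0)$ components need no separate treatment, since $\Gamma$ depends on neither.

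\textbf{Step 1: write $\Gamma$ as a linear function of $\xi$.} Using $\Ree(a\xi b)=\Ree(\xi ba)$ and \cref{eq:pairing},
\begin{align}
\Gamma(z,\xi)=\Ree\left[\xi(\conj{z}-i)(\conj{z}+i)\right]=\langle \xi, w(z)\rangle_{\R},
\qquad
w(z)\defeq (z-i)(z+i),
\end{align}
where the second equality uses $\conj{(\conj{z}-i)(\conj{z}+i)}=(z-i)(z+i)$. Consequently $\partial\Gamma/\partial\bm\xi=\vecH(w(z))$.

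\textbf{Step 2: show $w(z)$ is tangent to the fiber, i.e.\ $\diff(\mathcal{WA}_q)_z(w)=0$.} I would use the chain rule through \cref{eq:mappingComposition}.
\begin{itemize}
\item From the proof of \cref{theorem:mobius-momenta}, $\diff\mathcal{M}^{(1)}_q(\delta)=2(z-i)^{-1}\delta(z-i)^{-1}$. This gives $\diff\mathcal{M}^{(1)}_q(w)=2(z+i)(z-i)^{-1}$.
\item On the other hand, $i\q{2}=i(iz-1)(z-i)^{-1}=-(z+i)(z-i)^{-1}$. Hence $\diff\mathcal{M}^{(1)}_q(w)=-2i\q{2}$.
\item The differential of $h\mapsto\conj{h}ih$ applied to $ih$ is $\conj{(ih)}ih+\conj{h}i(ih)=-\conj{h}iih+\conj{h}iih=0$.
\end{itemize}
So $w$ is pushed to the KS fiber direction and then annihilated by $\mathcal{KS}_q$, and hence by the whole composition. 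In vector form this reads $B(\bm z)\vecH(w)=0$, with $B$ the matrix of \cref{eq:B-matrix}.

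\textbf{Step 3: the force contribution vanishes.} The generalized force enters $\diff\Gamma/\diff s$ only through
\begin{align}
\frac{\partial\Gamma}{\partial\bm\xi}\cdot\hat{\bm G}=\eta_{\mathrm{WA}}\,\bm f^\top B(\bm z)\vecH(w)=0,
\end{align}
by Step 2 together with \cref{eq:canonical-force-regular}.

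\textbf{Step 4: show $\{\Gamma,K\}=0$.} The Hamiltonian flow of $\Gamma$ is the cotangent lift of the vector field $z\mapsto w(z)$:
\begin{align}
\frac{\diff z}{\diff\tau}=w(z),
\qquad
\frac{\diff\bm\xi}{\diff\tau}=-\left(\frac{\partial \vecH(w)}{\partial \bm z}\right)^{\!\top}\bm\xi .
\end{align}
This flow preserves the canonical one-form $\langle\xi,\delta z\rangle_{\R}$. By Step 2 it preserves $q=\mathcal{WA}_q(z)$, and therefore also $\delta q=\diff(\mathcal{WA}_q)(\delta z)$.

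By \cref{theorem:quat-contangent-lift} applied to the composition, $p$ is characterized by $\langle\xi,\delta z\rangle_{\R}=\langle p,\diff(\mathcal{WA}_q)\delta z\rangle_{\R}$ for all $\delta z$. Since $\diff(\mathcal{WA}_q)$ is surjective onto $\IH$ away from collisions and $\inftyWA$, this determines $p$ uniquely, so $p$ is invariant along the flow of $\Gamma$.

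\cref{theorem:regularHamiltonian-explicit} gives $K=\hat H_e\circ\mathcal{WA}$. Hence $K$ is invariant under the flow of $\Gamma$, and $\{\Gamma,K\}=0$ on the complement of the collision points.

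\textbf{Step 5: extend to collisions.} Both $\{\Gamma,K\}$ and the force term are continuous on \cref{eq:regular-domain} by \cref{theorem:regularHamiltonianRegularity,theorem:regular-EOM}. Their vanishing therefore extends to $z=\pm i$ by continuity.

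\textbf{Main obstacle.} I expect Step 4 to be the delicate part. The identity $p=\mathcal{WA}_p(z,\xi)$ only holds on $\admsWA$, while a general $\xi$ carries a fiber component that $p$ does not see. I will need to argue the invariance of $K$ directly from the explicit form \cref{eq:regularHamiltonian}, checking that each term depends on $\xi$ only through the fiber-invariant combination. If the abstract argument proves slippery, the fallback is to verify $\{\Gamma,\cdot\}=0$ term by term:
\begin{itemize}
\item for $\varrho^2|\xi|^2$, using $\langle w,\cdot\rangle$-orthogonality identities;
\item for $J_{\mathrm{WA}}$, by direct quaternionic computation;
\item for the $z$-only terms, which are immediate from Step 2.
\end{itemize}
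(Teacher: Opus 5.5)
Your Steps 1--3 and 5 are sound. The explicit generator $w=(z-i)(z+i)$ with $\diff\mathcal{M}^{(1)}_q(w)=-2i\q{2}$ is a nice concrete version of the paper's claim that the fiber direction is annihilated by $B$.

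The gap is in Step 4. The identity $\langle\xi,\delta z\rangle_{\R}=\langle p,\diff(\mathcal{WA}_q)\delta z\rangle_{\R}$ for all $\delta z$ holds only on $\admsWA$: with $\delta z=w$ the left side is $\Gamma$ and the right side is $0$. Moreover, it can only pin down $\mathrm{Im}(p)$, since the range of $\diff(\mathcal{WA}_q)$ is $\IH$. Off $\admsWA$, the quaternion $\mathcal{WA}_p(z,\xi)$ from which $K$ is built (through $|p|^2$ and $\Ree(kqp)$) has real part $-4\varrho^2\Gamma/(\chi^{(-)}\chi^{(+)})\neq0$, by the computation in \cref{eq:p4-verification}.

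So your argument proves $\{\Gamma,K\}=0$ only on the hypersurface $\{\Gamma=0\}$. That set has empty interior, so the density argument of Step 5 cannot extend it. You would get at most the invariance of $\admsWA$ needed for \cref{theorem:LambdaInvariant}, not conservation of $\Gamma$ along every solution, which is what the lemma asserts.

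Your fallback does not close the gap either. The kinetic term does check out: $\tfrac12|\xi|^2\,\nabla(\varrho^2)\cdot w$ and $\varrho^2\langle \xi(z+i)+(z-i)\xi,\xi\rangle_{\R}$ both equal $2z_4\varrho^2|\xi|^2$ and cancel. But the term that actually needs an idea, $J_{\mathrm{WA}}$, is left as an unspecified computation.

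The missing idea, which is how the paper argues, is to use the explicit momentum formulas, which are defined on all of $\HH\times\HH$, rather than the one-form characterization. Your diagnosis that ``$p=\mathcal{WA}_p(z,\xi)$ only holds on $\admsWA$'' is slightly off. $K$ equals $\hat H_e\circ\mathcal{WA}$ everywhere, once $\hat H_e$ is read with quaternion-valued $p$ ($|p|^2$ the full norm, rotation term $\Ree(kqp)$). What is confined to $\admsWA$ is only the characterization you used.

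The argument then runs as follows:
\begin{enumerate}
\item The cotangent lift of $\mathcal{M}^{(1)}_q$ is a symplectomorphism for every $\xi$ (with $z\neq i$). In the intermediate variables, $\Gamma=2\Ree(\conj{\q{2}}i\p{2})=\langle\p{2},-2i\q{2}\rangle_{\R}$.
\item Hence the $\Gamma$-flow is $(\q{2},\p{2})\mapsto(e^{-2i\tau}\q{2},e^{-2i\tau}\p{2})$, consistent with your $\diff\mathcal{M}^{(1)}_q(w)=-2i\q{2}$.
\item Both $\conj{\q{2}}i\q{2}$ and $\conj{\q{2}}i\p{2}/(2|\q{2}|^2)$ are invariant under this rotation for arbitrary $(\q{2},\p{2})$, not just on $\admsKS$. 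So $q$ and the full quaternion $\mathcal{WA}_p(z,\xi)$, real part included, are conserved along the flow.
\item Apart from $(q_0,p_0)$, which the flow does not touch, every term of $K$ depends on $(z,\xi)$ only through $q$ and this quaternion, on all of $\HH\times\HH$. Indeed, $\tfrac12\varrho^2|\xi|^2=\tfrac12\eta\,|\mathcal{WA}_p|^2$ by multiplicativity of the norm, and $J_{\mathrm{WA}}=\Ree(kq\,\eta\,\mathcal{WA}_p)$.
\end{enumerate}
This gives $\{\Gamma,K\}=0$ away from collisions on every level set of $\Gamma$, and your Steps 3 and 5 then finish the proof.
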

\begin{proof}
First consider a state away from a collision, so that the three steps in \cref{eq:mappingCompositionMomenta} are well-defined.
Let $(\q{2},\p{2})$ denote the intermediate state after the first M\"obius step \cref{eq:mobius-all-a}.
For $\theta\in\R$, consider the simultaneous rotation
\begin{align}
(\q{2},\p{2}) \mapsto (e^{-i\theta}\q{2},e^{-i\theta}\p{2}).
\label{eq:Hopf-fiber-rotation}
\end{align}
Substituting this into \cref{eq:hopf-all-b} while noting that $i$ commutes with $e^{-i\theta}$ yields:
\begin{align}
\overline{(e^{-i\theta}\q{2})}\,i\,(e^{-i\theta}\q{2})
= \conj{\q{2}}i\q{2},
\qquad
\frac{\overline{(e^{-i\theta}\q{2})}\,i\,(e^{-i\theta}\p{2})}{2|e^{-i\theta}\q{2}|^2}
= \frac{\conj{\q{2}}i\p{2}}{2|\q{2}|^2}
\label{eq:Hopf-fiber-invariance}
\end{align}
which implies that the rotation in \cref{eq:Hopf-fiber-rotation} leaves both outputs (coordinates and momenta) of the KS step unchanged.
Consequently, the physical coordinates and momenta obtained after the final M\"obius step \cref{eq:mobius-all-c} are also unchanged.

Also, for each fixed $\theta$, the rotation \cref{eq:Hopf-fiber-rotation} preserves the canonical one-form at the KS step:
\begin{align}
\Ree\left((e^{-i\theta}\p{2})\overline{\diff(e^{-i\theta}\q{2})}\right)
= \Ree\left(e^{-i\theta}\p{2}\overline{\diff\q{2}}e^{i\theta}\right)
= \Ree\left(\p{2}\overline{\diff\q{2}}\right).
\end{align}
implying that \cref{eq:Hopf-fiber-rotation} is a canonical transformation.
To identify the momentum conjugate to the angle $\theta$, we now consider the infinitesimal variation of $\theta$ and evaluate the canonical one-form on the resulting infinitesimal change of $\q{2}$.
Differentiating \cref{eq:Hopf-fiber-rotation} with respect to $\theta$ at $\theta=0$ gives the infinitesimal fiber rotation
$\delta\q{2} = \left.\diff(e^{-i\theta}\q{2})/\diff\theta\right|_{\theta=0} = -i\q{2}$.
Thus, the momentum conjugate to $\theta$, denoted by $h_{\mathrm{fib}}$, is calculated by evaluating the canonical one-form at the KS step, yielding
\begin{align}
h_{\mathrm{fib}} \defeq
\Ree\left(\p{2}\overline{\delta\q{2}}\right)
= \Ree\left(\p{2}\overline{-i\q{2}}\right)
= \Ree\left(\conj{\q{2}}i\p{2}\right)
\label{eq:Hopf-fiber-momentum}
\end{align}

Now, since \cref{eq:mobius-all-a} is equivalent to
$\conj{z}-i = (\conj{z}+i)\conj{\q{2}}i$ and $2\p{2} = (\conj{z}+i)\xi(\conj{z}+i)$ where note that $z = \q{3} $ and $\xi = \p{3} $, substituting these equations into \cref{eq:WA-fiber-quantity} yields
\begin{align}
\begin{aligned}
\Gamma(z,\xi) 
= \Ree\left[(\conj{z}+i)\xi(\conj{z}+i)\conj{\q{2}}i\right]
= 2\Ree\left(\p{2}\conj{\q{2}}i\right)
= 2\Ree\left(\conj{\q{2}}i\p{2}\right)
= 2h_{\mathrm{fib}},
\end{aligned}
\label{eq:WA-fiber-quantity-relation}
\end{align}
This implies that conservation of $h_{\mathrm{fib}}$ is equivalent to conservation of $\Gamma$.

Thus, let us now prove $\Gamma = \mathrm{constant}$ by showing ${\diff h_{\mathrm{fib}}}/{\diff s} = 0$, where
\begin{align}
\frac{\diff h_{\mathrm{fib}}}{\diff s} 
= \left[\frac{\diff h_{\mathrm{fib}}}{\diff s} \right]_{\mathrm{natural}} 
+ \left[\frac{\diff h_{\mathrm{fib}}}{\diff s} \right]_{\mathrm{forced}}
\label{eq:h-conservation}
\end{align}
The two bracketed terms represent unforced (= natural) and forced contributions to the change rate of $h_{\mathrm{fib}}$.
Away from collision, $K$ is obtained by evaluating $\hat H_e$ at $(q_0,p_0,q,p)$.
The fiber rotation \cref{eq:Hopf-fiber-rotation} leaves the resulting $q$ and $p$ unchanged due to \cref{eq:Hopf-fiber-invariance} (while $q_0$ and $p_0$ are untouched).
Thus, it leaves every argument of $\hat H_e$, and hence $K$, unchanged.
Noether's theorem therefore implies that $h_{\mathrm{fib}}$ is conserved along the dynamical flow when the system is unforced, i.e., 
$[\diff h_{\mathrm{fib}} / \diff s]_{\mathrm{natural}} = 0$.
For the external-force term, let $\delta\bm z$ be the WA-coordinate tangent vector corresponding to the infinitesimal fiber rotation $\delta\q{2}=-i\q{2}$, related by the first M\"obius step \cref{eq:mobius-all-a}.
Since \cref{eq:mobius-all-a} is a cotangent lift, the fiber rotation is canonical in the WA variables, and its momentum satisfies $\partial h_{\mathrm{fib}}/\partial\bm\xi=\delta\bm z$.
Also, because the fiber rotation produces no change in the physical coordinate $\bm q$, it satisfies $B\delta\bm z = (\partial \bm{q}/\partial \bm{z}) \delta\bm z=0$.
Consequently,
\begin{align}
\left[\frac{\diff h_{\mathrm{fib}}}{\diff s}\right]_{\mathrm{forced}}
= \frac{\partial h_{\mathrm{fib}}}{\partial\bm\xi}\cdot\hat{\bm G}
= \delta\bm z\cdot\hat{\bm G}
= \eta_{\mathrm{WA}}\bm f^\top(B\,\delta\bm z)
= 0.
\end{align}
Thus, $[\diff h_{\mathrm{fib}} / \diff s]_{\mathrm{natural}} = [\diff h_{\mathrm{fib}} / \diff s]_{\mathrm{forced}} = 0$, which implies $\diff \Gamma / \diff s = 2 (\diff h_{\mathrm{fib}} / \diff s) = 0$, and hence $\Gamma$ is conserved along \cref{eq:EOM-regular} away from collisions.

Finally, extend this result to collision states.
By \cref{theorem:regular-EOM}, the regularized vector field $\diff(z,\xi)/\diff s$ is continuous on \cref{eq:regular-domain}; 
in addition, $\nabla_{(z,\xi)}\Gamma$ is continuous because $\Gamma$ is polynomial.
Thus,
${\diff\Gamma}/{\diff s} = \nabla_{(z,\xi)}\Gamma\cdot{\diff(z,\xi)}/{\diff s}$
is continuous on \cref{eq:regular-domain}.
The collision set $\{z = \pm i\} $ is a measure-zero subset of the domain \cref{eq:regular-domain} and hence has empty interior, implying that every collision state is a limit of collision-free states.
Therefore, the continuity of ${\diff\Gamma}/{\diff s}$, together with $\diff \Gamma / \diff s = 0$ away from collisions, requires that $\diff\Gamma/\diff s=0$ at collision states as well.
\end{proof}

Recognizing that the set defined by $\Gamma = 0$ is equivalent to $\admsWA$ in \cref{eq:WA-admissibility}, it is clear that \cref{theorem:WA-fiber-momentum} establishes the invariance of $\admsWA$.
Building on this result, \cref{theorem:LambdaInvariant} shows that 
the WA-transformed dynamical flow given by \cref{eq:EOM-regular} ensures the WA state vector to remain in the domain $\admsWA$ as long as the following conditions are met.

\begin{condition}\label{condition:initial-condition}
For some $s_0\in\mathcal{I}_s $ (e.g., initial time), the WA state $(z(s_0),\xi(s_0))$ is in $\admsWA$. 
\end{condition}

\begin{condition}\label{condition:infinity}
The $z$-trajectory remains away from the set $\inftyWA$ on $\mathcal{I}_s $.
\end{condition}

These conditions are not restrictive in practice, because
\begin{enumerate}
	\item as shown in \cref{theorem:WAdmissibleCharacterization}, \cref{condition:initial-condition} is guaranteed to be met for any $(z,\xi)$ obtained by transforming arbitrary $(q,p)\in\IH\times \IH $,
	and
	\item 
	as shown in \cref{theorem:singularSet}, \cref{condition:infinity} may be violated only when the trajectory goes to the infinity in the original physical space, which is not of practical interest and can be easily detected during the numerical integration of \cref{eq:EOM-regular}.
\end{enumerate}
In practice, if one detects the $z$-trajectory approaching $\inftyWA$, one can simply halt the numerical integration of \cref{eq:EOM-regular} and declare \texttt{escape} of the particle from the system of interest (e.g., spacecraft leaving cislunar space to interplanetary space).

\begin{theorem}
\label{theorem:LambdaInvariant}
Let $(q_0(s),p_0(s),z(s),\xi(s))$ be the solution of \cref{eq:EOM-regular} on the interval $\mathcal{I}_s$ and satisfy \cref{condition:initial-condition,condition:infinity}.
Then, $(z(s),\xi(s))\in\admsWA,\ \forall s\in \mathcal{I}_s$.
\end{theorem}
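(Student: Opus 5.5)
The plan is to reduce the statement to the conservation law in \cref{theorem:WA-fiber-momentum}, combined with the observation that $\admsWA$ is exactly the zero level set of $\Gamma$ defined in \cref{eq:WA-fiber-quantity}. Comparing \cref{eq:WA-admissibility} with \cref{eq:WA-fiber-quantity}, we have $(z,\xi)\in\admsWA$ if and only if $\Gamma(z,\xi)=0$. The statement is therefore equivalent to showing that $\Gamma(z(s),\xi(s))=0$ for all $s\in\mathcal{I}_s$.

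First, I use \cref{condition:infinity} to place the whole trajectory inside the regular domain \cref{eq:regular-domain}. For every $s\in\mathcal{I}_s$ we have $z(s)\notin\inftyWA$, and $q_0(s)\in\mathcal I$ by the definition of $\mathcal I_s$ in \cref{eq:interval}. \cref{theorem:regular-EOM} then makes the vector field of \cref{eq:EOM-regular} $C^\infty$ along the solution. As a result $s\mapsto\Gamma(z(s),\xi(s))$ is a $C^1$ function on $\mathcal{I}_s$, being the composition of a polynomial with a $C^1$ curve.

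Next, I apply \cref{theorem:WA-fiber-momentum}, which gives $\diff\Gamma/\diff s=0$ at every point of the domain, collision states included. I need $\mathcal{I}_s$ to be an interval. This holds because $\diff q_0/\diff s=\partial K/\partial p_0=\eta_{\mathrm{WA}}\ge 0$, so $q_0$ is monotone in $s$ and the preimage of the interval $\mathcal I$ is connected. A function with vanishing derivative on a connected interval is constant, so $\Gamma(z(s),\xi(s))=\Gamma(z(s_0),\xi(s_0))$ for all $s\in\mathcal{I}_s$. By \cref{condition:initial-condition}, this common value is $0$, which gives $(z(s),\xi(s))\in\admsWA$ throughout.

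The main obstacle I expect is the connectedness of $\mathcal{I}_s$ together with the isolated zeros of $\eta_{\mathrm{WA}}$. At collisions $\eta_{\mathrm{WA}}$ vanishes, so $q_0$ is only nondecreasing rather than strictly increasing. The preimage of $\mathcal I$ is still an interval, since a monotone function pulls intervals back to intervals. The constancy argument does not rely on strict monotonicity in any case, because it works directly in $s$. All remaining regularity issues at collisions are already handled by the continuity extension inside \cref{theorem:WA-fiber-momentum}.
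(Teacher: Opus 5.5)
Your proposal is correct and follows the same approach as the paper: you identify $\admsWA$ with the zero level set of $\Gamma$, use \cref{theorem:WA-fiber-momentum} for conservation, and use \cref{condition:initial-condition} to fix the constant at zero. Your extra steps make explicit what the paper's three-line proof leaves implicit: you use \cref{condition:infinity} to keep the trajectory inside the regular domain, and you use $\diff q_0/\diff s=\eta_{\mathrm{WA}}\ge 0$ to show $\mathcal{I}_s$ is an interval.
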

\begin{proof}
By \cref{condition:initial-condition,eq:WA-admissibility,eq:WA-fiber-quantity}, $\Gamma(z(s_0),\xi(s_0))=0$.
By \cref{theorem:WA-fiber-momentum}, $\Gamma$ is a constant of motion (even under external forces) and hence remains zero for every $s\in\mathcal I_s$.
By definition of \cref{eq:WA-admissibility}, $\Gamma(z(s),\xi(s))=0,\ \forall s\in\mathcal I_s \iff (z(s),\xi(s))\in\admsWA,\ \forall s\in\mathcal I_s$.
\end{proof}

\subsection{Numerical Implementation}
\label{sec:implementation}

\begin{algorithm}
\caption{Globally-Regularized Spatial ER3BP Propagation}
\label{alg:WA-propagation}
\begin{algorithmic}[1]
\State
Set $s\gets s_0$; \textbf{if} ER3BP \textbf{then} $\nu\gets\nu_0$; ($\nu$ not needed for CR3BP)
\Comment{initialization}

\State 
Compute $(z_0, \xi_0)$ from $t_0, \bm{r}_0, \bm{v}_0, \theta $ using \cref{eq:qp-original,eq:q0p0-IC,eq:mappingCompositionMomentaForward}
\Comment{initial condition}

\State
Compute $(q_0(s), p_0(s), z(s), \xi(s)) $ by integrating \cref{eq:EOM-regular} for $s\in [s_0, s_f] $
\Comment{propagation}
\label{step:propagation}

\If{$t_f$ is given and $q_0 = t_f$ while Step \ref{step:propagation}}
	\label{step:complete}
	\State
	Set $s_f \gets s $;\quad
	$\texttt{status} \gets \texttt{complete}$
	\Comment{propagation completed}
\ElsIf{$t_f$ is given and $s = s_f$ while Step \ref{step:propagation}}
	\label{step:incomplete}
	\State
	Set $\texttt{status} \gets \texttt{incomplete}$
	\Comment{need greater $s_f$ to complete propagation}
\ElsIf{$\varrho(z)$ becomes small while Step \ref{step:propagation}}
	\label{step:escape}
	\Comment{use \cref{eq:WA-explicit-aux} for $\varrho(z)$}
	\State
	Set $s_f \gets s $;\quad
	$\texttt{status} \gets \texttt{escape}$
	\Comment{particle escaping the system}
\EndIf

\State
Compute $(t(s), \bm{r}(s), \bm{v}(s))$ from $(q_0(s), z(s), \xi(s)) $ for $s\in [s_0, s_f] $ using \cref{eq:WA-transformation}

\end{algorithmic}
\end{algorithm}

\cref{alg:WA-propagation} summarizes an example numerical implementation for propagating the globally-regularized spatial ER3BP using the WA transformation.
There are a few points to note.
\begin{enumerate}
	\item 
	Since the independent variable of the regularized EOM \cref{eq:EOM-regular} is $s$ instead of $t$, if one wants to propagate the dynamics from $t_0$ to $t_f$, it is advised to give a sufficiently large $s_f$ so that Step \ref{step:complete} of \cref{alg:WA-propagation} is satisfied.
	If $s_f$ is not sufficiently large, \cref{alg:WA-propagation} returns \texttt{incomplete} status.
	\item 
	Since \cref{eq:EOM-regular} is singular when the particle goes to the infinity of the physical space, one should terminate the propagation as the $z$-trajectory approaches the set $\inftyWA$ defined in \cref{eq:singular-set}, which is equivalent to $\varrho(z)= 0 $ as clear from \cref{eq:WA-explicit-aux}.
	Thus, Step \ref{step:escape} of \cref{alg:WA-propagation} terminates the propagation and returns \texttt{escape} status when $\varrho(z)$ becomes small.
	In practice, one can terminate it when $\varrho(z) \leq \epsilon_{\infty} $ with a small threshold $\epsilon_{\infty}>0$.
	\item 
	While the regularized EOM \cref{eq:EOM-regular} allows one to smoothly propagate $(z,\xi)$ through both of the two singularities, its inverse mapping to $(q,p)$ after propagation needs some attention.
	As discussed in \cref{remark:p-singularity}, the inverse mapping from $z$ to $q$ is well-defined, but the momenta mapping $(z,\xi)\mapsto p$ is undefined when $z$ is exactly at one of the collision states (i.e., when $z = \pm i$).
\end{enumerate}

\subsection{Directions for Further Extensions}

Beyond the ER3BP, all the analyses above directly apply to a slightly broader class of R3BPs.
Specifically, it readily extends to systems where the two primaries are on non-Keplerian orbits due to some external perturbations, provided that they remain in the same fixed co-orbital plane, $\bm{R}_{12}(T)$ is a smooth function of time, and the angular momentum vector $(\bm{R}_{12}\times \dot{\bm{R}}_{12})$ remains non-vanishing.
For instance, it encompasses scenarios where the primaries are perturbed by other gravitational bodies in the same orbital plane.

Moreover, the overall approach presented in this paper will likely remain useful for even broader classes of systems.
For instance, one could extend the presented approach to higher-fidelity models, such as the Hill restricted 4-body problem \cite{Scheeres1998b,Park2025a,Brown2025}, by incorporating time-varying orbital plane of the primaries and perturbations due to the fourth body (without attempting to regularize the fourth body).

Another direction of extension includes considering other global regularization coordinates.
The unifying framework presented in this paper should allow one to replace the KS step \cref{eq:hopf-all-b} with a different regularizing transformation in the quaternionic space.

\section{Numerical Examples}
\label{sec:examples}

This section demonstrates the validity and effectiveness of the presented formulation for global regularization of the spatial ER3BP under external forces.
Specifically, this paper considers the Earth-Moon system perturbed by SRP.
For numerical implementation, this study uses \cref{alg:WA-propagation} to propagate the regularized EOM, which implements the analytical partial derivatives of $K$ in \cref{eq:EOM-regular} calculated by a symbolic manipulator.
This numerical study uses Matlab's \texttt{ode89} function to numerically solve the ODE with the relative and absolute tolerances $1\times10^{-12}$;
\texttt{ode89} is an adaptive-step ODE solver that implements the Runge-Kutta 9(8) method in \cite{Verner2010}.
This study uses the \texttt{event} function capability of \texttt{ode89} to implement the escape condition discussed in \cref{sec:implementation,alg:WA-propagation}.

\subsection{Physical System Definition}
\label{sec:physical_system_definition}
\cref{t:param} lists relevant parameters to model the Earth-Moon system.
For the CR3BP, only $\mu, a, n$ are relevant since $e=0$ by definition, and $\omega_z(t) = n$ and $l(t) = a$ are both constant;
$\omega_z(t)$ is used within the dynamics propagation while $l(t)$ is necessary to convert numerical results into the original units.
For the ER3BP,
one can calculate $\omega_z(t)$ from $e$ and $\nu(t)$, where $\nu(t)$ can be obtained by either 
(i) solving the Kepler's equation using $e$ and the mean anomaly $M(t) = M(t=t_0) + n(t - t_0) $,
or 
(ii) integrating 
$\diff \nu / \diff t = \sqrt{1 + e\cos\nu} $ for Cartesian propagation, 
or
${\diff \nu}/{\diff s} = ({\diff \nu}/{\diff t})({\diff t}/{\diff s}) = \eta_{\mathrm{WA}}\,\sqrt{1 + e\cos\nu}$ for the regularized dynamics.
Likewise, one can calculate $l(t)$ as $l(t) = {a(1 - e^2)}/{(1 + e\cos{\nu(t)})} $ to recover numerical results in the original units.

\begin{table}[tb]
\centering
\caption{Physical parameters used for the Earth-Moon system.}
\label{t:param}
{\small
\begin{tabular}{c|c|c|c|c|c}
\hline \hline
		& mass ratio 					& eccentricity 			& semi-major axis 			& mean motion 			& true anomaly \\
\hline
Symbol 	& $\mu$ [-]		& $e$ [-]	& $a$ [km]	& $n\ [\mathrm{rad/s}] $ 		& $\nu(t=t_0) $ [rad] \\
\hline
Value 	& $1.215059\times 10^{-2}$ 		& $0.0549$ 				& $3.844\times 10^5$ 		& $2.6617\times10^{-6}$ & $0$  \\
\hline \hline
\end{tabular}
}
\end{table}

Let us now discuss the relevant parameters to model SRP as the external acceleration $\bm{f}$.
Assuming the standard cannon-ball model \cite{Vallado2001a}, we model SRP acceleration as $\bm{f}_{\mathrm{SRP}} = f_{\mathrm{SRP}} \hat{\bm{s}} $, where $\hat{\bm{s}}$ is the unit vector aligned with the Sun-particle direction, while $f_{\mathrm{SRP}} = 3\times 10^{-5}$ in the non-dimensional unit;
this roughly corresponds to the magnitude of SRP acceleration acting on a small satellite (e.g., 12-unit CubeSat) of area-to-mass ratio $0.012\ \mathrm{m^2/kg}$ with reflectivity ratio $0.5$ at 1 AU.
To calculate $\hat{\bm{s}} $, the Earth-Moon barycenter is assumed to be orbiting the Sun with the heliocentric period $365.2$ days, with inclination of $5.145$ deg relative to the Earth-Moon orbit plane, and with the heliocentric initial true anomaly $60 $ deg.

\begin{figure}[tb]
\centering 
\includegraphics[width=\linewidth]{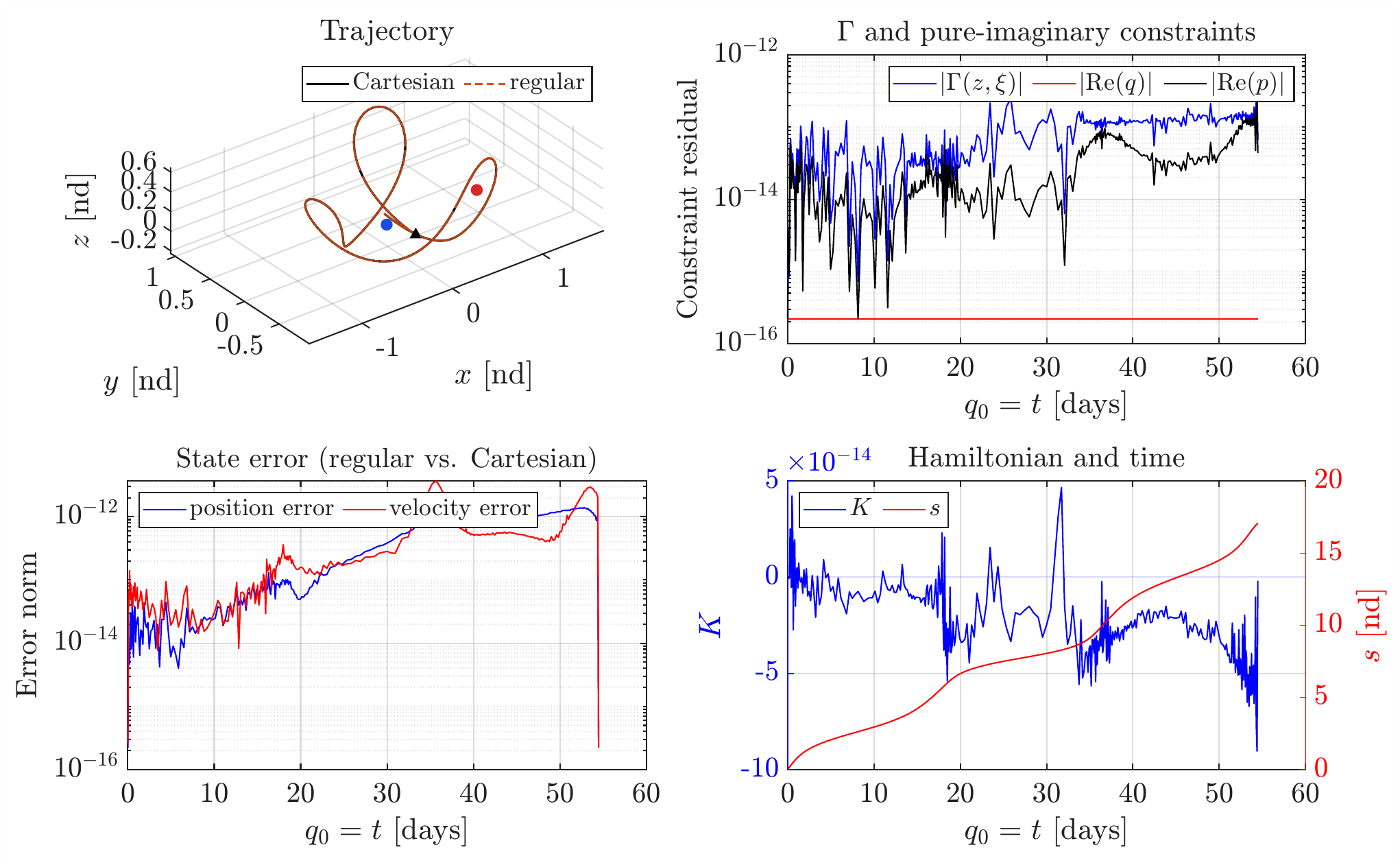}
	\caption{\label{f:validation} Validation simulation using a spatial synodic resonant orbit traveling across the Earth-Moon system modeled as a SRP-perturbed Earth-Moon ER3BP (hence not periodic in this system), confirming the consistency of the regularized and Cartesian propagation and the conservation of constants.}
\end{figure}

\subsection{Validation}
\label{sec:validation}
This section validates the presented formulation using representative trajectories that do \textit{not} approach any of the primaries.
It is because, if a trajectory encounters singularity, Cartesian propagation yields inaccurate results (as observed in \cref{sec:trajectories_with_collisions}), which makes it not straightforward to test the consistency between the regularized and Cartesian dynamics.
This section compares numerical solutions to the regularized EOM \cref{eq:EOM-regular} against those from the conventional Cartesian propagation.
Since both EOMs describe the same dynamics, they must match when the trajectory does not approach either of the primaries.

\cref{f:validation} summarizes one of the validation simulations conducted.
This simulation considers a spatial synodic resonant orbit that travels across the Earth-Moon system, which has various utilities in space operations \cite{Gupta2024}.
The initial condition is listed in \cref{t:initial-cartesian,t:initial-regular}, where \cref{t:initial-cartesian} provides the initial conditions in the Cartesian space while \cref{t:initial-regular} lists those in the regularization space.
The initial regularization state is obtained by applying the lifting map given in \cref{sec:lifting-map} with $\theta = \pi/2$;
numerical simulations with different $\theta$ values confirmed the same trend.
The rest of numerical simulations in this paper also use $\theta = \pi/2$.

\begin{table}[tb]
\centering
\caption{Initial conditions in $(\bm{r}, \bm{v})\ \mathrm{[nd]} $ for the simulations shown in \cref{f:validation,f:CR3BP-P2,f:ER3BP-SRP-P1,f:ER3BP-SRP-P2}.}
\label{t:initial-cartesian}
{\small
\begin{tabular}{@{}c|ccc|ccc@{}}
\hline \hline
Model (Fig. \#) & $r_1$ & $r_2$ & $r_3$ & $v_1$ & $v_2$ & $v_3$ \\
\hline
ER3BP + SRP (\cref{f:validation}) & $0.310545$ & $0$ & $-0.215848$ & $0$ & $1.625235$ & $0$ \\
\hline
CR3BP (\cref{f:CR3BP-P2}) & $0.731656$ & $0.282842$ & $0$ & $0.0276066$ & $0.324744$ & $0$ \\
\hline
ER3BP + SRP (\cref{f:ER3BP-SRP-P1}) & $2.09747$ & $1.26590$ & $0.343864$ & $1.30823$ & $-2.10743$ & $0.0503013$ \\
\hline
ER3BP + SRP (\cref{f:ER3BP-SRP-P2}) & $1.02511$ & $-0.137996$ & $0$ & $-0.177550$ & $-0.153317$ & $0$ \\
\hline \hline
\end{tabular}
}
\end{table}

\begin{table}[tb]
\centering
\caption{Initial conditions in the regularization state $(\bm{z}, \bm{\xi}, q_0 = 0, p_0)\ \mathrm{[nd]} $ for the simulations shown in \cref{f:validation,f:CR3BP-P2,f:ER3BP-SRP-P1,f:ER3BP-SRP-P2}.}
\label{t:initial-regular}
{\small
\setlength{\tabcolsep}{4pt}
\begin{tabular}{@{}c|cccc|cccc|c@{}}
\hline \hline
Case & $z_1$ & $z_2$ & $z_3$ & $z_4$ & $w_1$ & $w_2$ & $w_3$ & $w_4$ & $p_0$ \\
\hline
\cref{f:validation} & $-0.2935$ & $0.8612$ & $0$ & $0.4150$ & $-0.2204$ & $0.6465$ & $0.3617$ & $0.3115$ & $1.171$ \\
\hline
\cref{f:CR3BP-P2} & $0.7450$ & $1.637$ & $0$ & $0$ & $-0.04365$ & $0.1859$ & $0$ & $0$ & $1.409$ \\
\hline
\cref{f:ER3BP-SRP-P1} & $0.1053$ & $-0.04895$ & $-0.07355$ & $-0.3692$ & $6.575$ & $3.011$ & $-0.1527$ & $0.7772$ & $0.2544$ \\
\hline
\cref{f:ER3BP-SRP-P2} & $0.4928$ & $0.2338$ & $0$ & $0$ & $0.2531$ & $-0.07620$ & $0$ & $0$ & $1.410$ \\
\hline \hline
\end{tabular}
}
\end{table}

The top-left panel of \cref{f:validation} compares the 3-D physical trajectories obtained by the Cartesian propagation and by converting the result from regularized propagation to the Cartesian state.
It provides a visual confirmation that both results are consistent with each other.
Note that the chosen orbit is periodic in the unperturbed CR3BP, and hence the trajectory in this figure is not periodic due to the different dynamical model.

The bottom-left panel quantitatively confirms the consistency between the regularized and Cartesian EOMs.
It plots the time histories of the position and velocity differences between the two trajectories, indicating their agreement to the numerical integration tolerance ($1\times10^{-12}$) in the non-dimensional unit.

The top-right panel verifies that the regularized trajectory respects the relevant constraints, including $\Gamma = 0$ and the pure-imaginary constraints on $(q,p)$, i.e., $\Ree{(q)} = \Ree{(p)} = 0$, both within numerical noise.
As stated in \cref{theorem:WA-fiber-momentum}, $\Gamma$ is a constant of motion associated with the fiber rotation.
The time history of $(q,p)$ is obtained by applying the WA transformation \cref{eq:WA-transformation} to the propagated regularization state $(z,\xi)$.
$\Ree{(q)} = \Ree{(p)} = 0$ ensures $(q,p)\in\IH \times \IH $, and hence physically meaningful trajectories.
This plot serves as a quantitative validation of \cref{theorem:WA-fiber-momentum,theorem:LambdaInvariant}.

The bottom-right panel visualizes the Hamiltonian drift ($K(t) - K_0$, where $K_0 = 0$ by construction; see \cref{eq:regularHamiltonian} for $K$) and the evolution of the fictitious time $s$ as a function of the physical time.
Even though the system considered here is non-conservative (ER3BP + SRP), $K(t)$ should remain constant up to numerical integration error, and this figure demonstrates it.
For $s$, we can observe $s$ is monotonically increasing in $t$ as expected, and that the growth of $s$ becomes faster when the particle approaches any of the primaries.

\subsection{Trajectories with Collisions}
\label{sec:trajectories_with_collisions}

This section demonstrates the effectiveness of the presented formulation in handling collisions with primaries.
We compare the following key metrics as measures of effectiveness:
\begin{enumerate}
	\item wall-clock runtime required for completing single propagation,
	\item number of independent-variable steps taken in \texttt{ode89} (adaptive-step ODE solver),
	and
	\item smallest step size taken in \texttt{ode89}.
\end{enumerate}
Clearly, smaller values are desirable for the first and second metrics while for the third metric, being able to take greater step sizes is advantageous.

Specifically, this study evaluates these metrics for each of the regular and Cartesian propagators in simulating three different trajectories.
These trajectories feature different initial conditions and force models, summarized in \cref{t:initial-cartesian} (Cartesian) and in \cref{t:initial-regular} (regularized).
These initial conditions correspond to the results presented in \cref{f:CR3BP-P2,f:ER3BP-SRP-P1,f:ER3BP-SRP-P2}, where \cref{f:CR3BP-P2,f:ER3BP-SRP-P2} considers collisions with the Moon while \cref{f:ER3BP-SRP-P1} involves Earth collisions.
For regularized propagation, \cref{f:CR3BP-P2,f:ER3BP-SRP-P2} uses the range of the fictitious time $s \in [0, 90] $ while \cref{f:ER3BP-SRP-P1} uses $s \in [0, 22.5] $.
The physical final times $t_f$ are recorded for each regularized propagation case, and they are used as the $t_f$ in the physical Cartesian propagation.

In all of these examples, the regularized propagator successfully completes the numerical integration of \cref{eq:EOM-regular} through the collisions with the primaries, which quantitatively validates the regularity claims of the Hamiltonian and EOM, \cref{theorem:regularHamiltonianRegularity,theorem:regular-EOM}.
In addition, the trajectory plots going through the primaries in the position space demonstrates the regularity of regular-to-Cartesian coordinate transformation as stated in \cref{theorem:regularity-WAmap}.

\begin{figure}[tb]
\centering \subfigure[\label{f:CR3BP-P2-reg} Regularized propagation, $x$-$y$ projection (shaded region: forbidden region).]
{\includegraphics[width=0.49\linewidth]{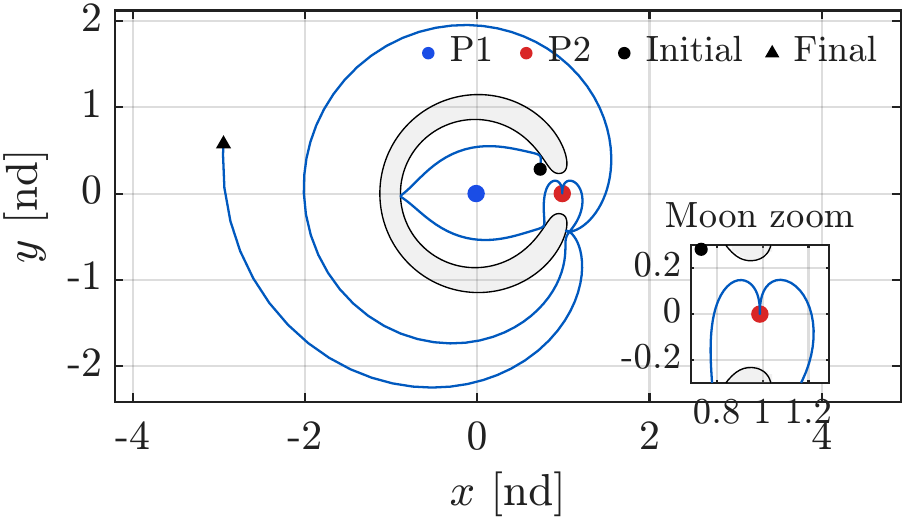}}
\centering \subfigure[\label{f:CR3BP-P2-Car} Cartesian propagation, $x$-$y$ projection.]
{\includegraphics[width=0.49\linewidth]{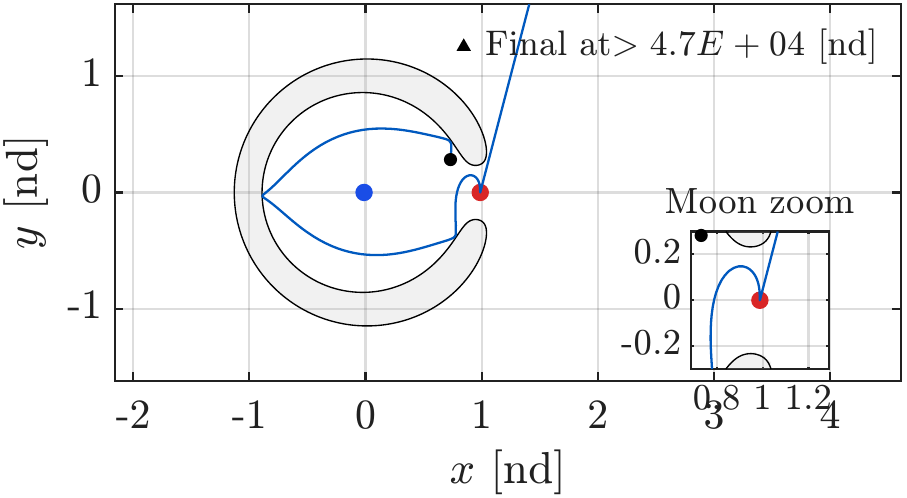}}
\centering \subfigure[\label{f:CR3BP-P2-mom} Inertial angular momentum about each primary.]
{\includegraphics[width=0.49\linewidth]{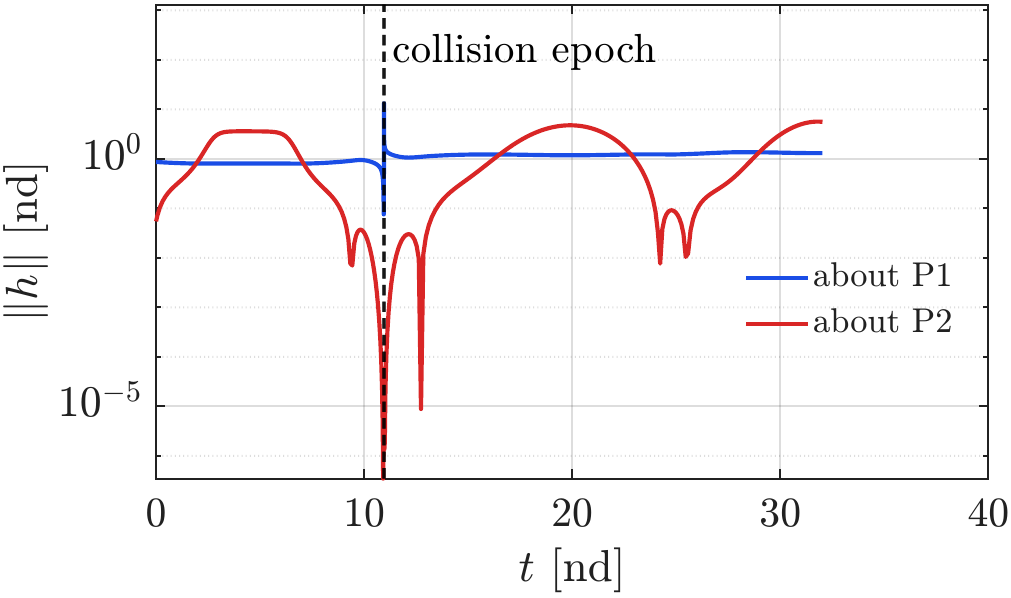}}
\centering \subfigure[\label{f:CR3BP-P2-step} Independent-variable step size comparison.]
{\includegraphics[width=0.49\linewidth]{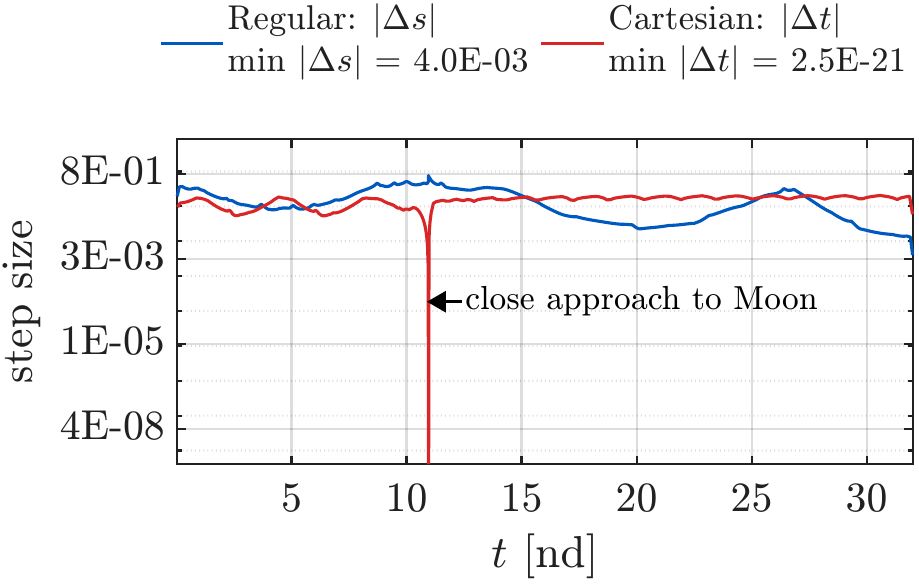}}
	\caption{\label{f:CR3BP-P2} Earth-Moon CR3BP, regularized vs.~Cartesian dynamics, with Moon singularity encounter and $t_f \approx 139$ days.
	The regularized formulation successfully propagates the trajectory through the collision while the Cartesian resulted in cislunar escape due to the numerical error, even though the initial conditions are identical.%
	}
\end{figure}

First, \cref{f:CR3BP-P2} shows results for a Moon-collision trajectory in the unperturbed CR3BP.
\cref{f:CR3BP-P2-reg,f:CR3BP-P2-reg} compares the propagation results with regular and Cartesian EOM, respectively, starting from the same physical initial condition listed in \cref{t:initial-cartesian} (of course, the regularized propagator uses the corresponding regularization state $(z,\xi)$ listed in \cref{t:initial-regular}).
The shaded region indicates the forbidden region at the energy level of the chosen trajectory.
Although the initial portion up to the Moon encounter is identical, these figures show completely different trajectories after the encounter.
In \cref{f:CR3BP-P2-Car}, the particle is ``kicked out'' the system after the Moon close (subsurface) approach and reaches the distance greater than $4.7\times10^4$ in the non-dimensional unit ($>1.8\times 10^{10}$ km).
\cref{f:CR3BP-P2-mom} plots the time histories of the angular momentum of the regularized trajectory about each primary, calculated in the inertial frame;
this shows that the angular momentum about the Moon ($P_2$) drops and approaches extremely close to zero near the time of collision, which physically makes sense because such collisions trajectories yield zero osculating angular momentum momentarily.
Such situations make Milankovitch-type orbital elements \cite{Rosengren2014} not applicable.

\cref{f:CR3BP-P2-step} compares the step sizes of the independent variables as determined by the adaptive-step ODE solver with the regular ($\abs{\Delta s}$) and Cartesian EOMs ($\abs{\Delta t}$), plotted as a function of the physical time $t$.
This figure exhibits the rapid reduction of the Cartesian propagator step size when encountering the Moon, shrinking down to $2.5\times 10^{-21}$ in the non-dimensional time unit, which is around $9.5\times 10^{-16} $ sec in the dimensional unit.
Such small steps are within numerical noise and lead to inaccurate numerical integration of ODEs.
On the other hand, the step size of the regularized propagator remains roughly constant throughout the propagation, enabling efficient and reliable numerical solutions for trajectories involving collisions.
The efficiency of the regularized propagator is evident also in \cref{t:comparison}, which compares the three key metrics mentioned above, indicating that the regularization achieved $>1450\times$ speedup compared to the Cartesian propagation.

\begin{table}[tb]
\centering
\caption{Key metric comparison between regularized propagator and Cartesian propagator.}
\label{t:comparison}
{\small
\begin{tabular}{c|c|c|c}
\hline \hline
Model (Fig. \#)	& {runtime [sec]} 		& {\# steps [-]} 			& {smallest step size [nd]} \\
				& {Regular / Cartesian} & {Regular / Cartesian} 	& {Regular / Cartesian}  \\
\hline CR3BP (\cref{f:CR3BP-P2}) 			& 0.097 / 141.2 & $286$ / $4.6\times10^6$ & $4.0\times10^{-3}$ / $2.5\times10^{-21}$ \\
\hline SRP + ER3BP (\cref{f:ER3BP-SRP-P1}) 	& 0.347 / fail & $404$ / fail & $8.3\times10^{-4}$ / $1.7\times10^{-24}$ \\
\hline SRP + ER3BP (\cref{f:ER3BP-SRP-P2}) 	& 0.187 / 413.7 & $255$ / $4.6\times10^6$ & $1.8\times10^{-3}$ / $2.6\times10^{-21}$ \\
\hline \hline
\end{tabular}
}
\end{table}

\begin{figure}[tb]
\centering \subfigure[\label{f:ER3BP-SRP-P1-reg-3D} Regularized propagation, 3-D view.]
{\includegraphics[width=0.49\linewidth]{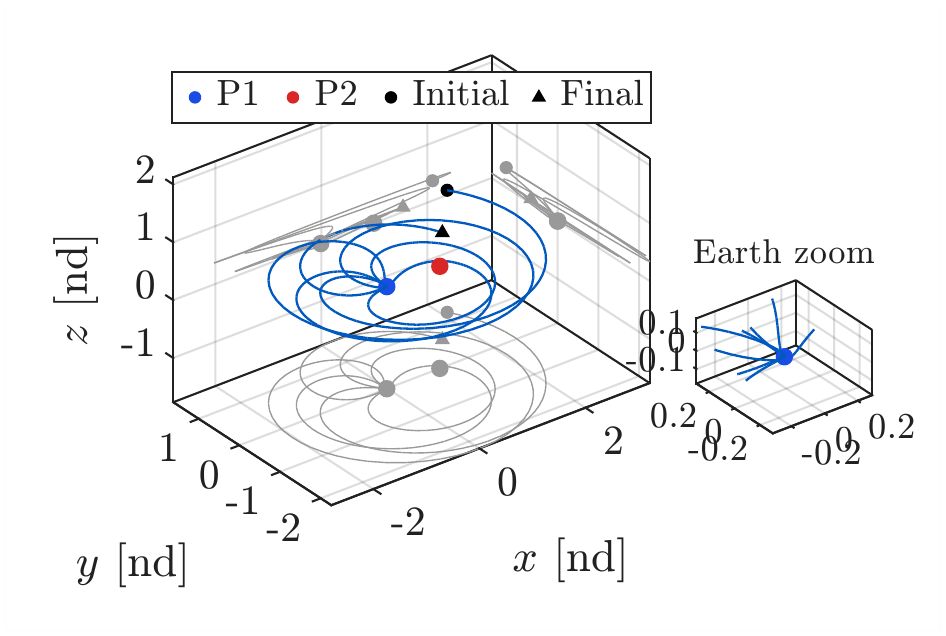}}
\centering \subfigure[\label{f:ER3BP-SRP-P1-Car-3D} Cartesian propagation, 3-D view.]
{\includegraphics[width=0.49\linewidth]{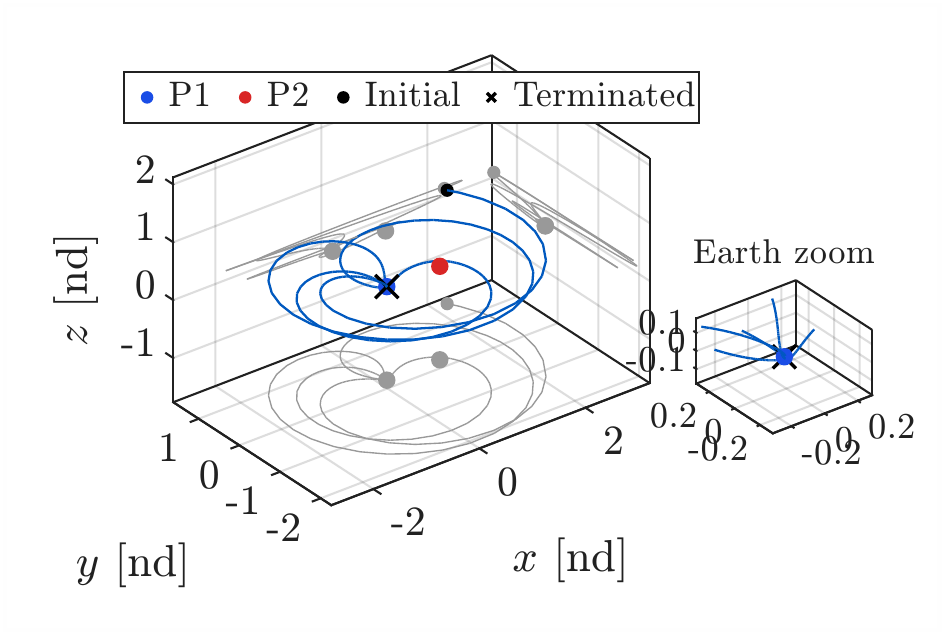}}
\centering \subfigure[\label{f:ER3BP-SRP-P1-step} Independent-variable step size comparison.]
{\includegraphics[width=0.6\linewidth]{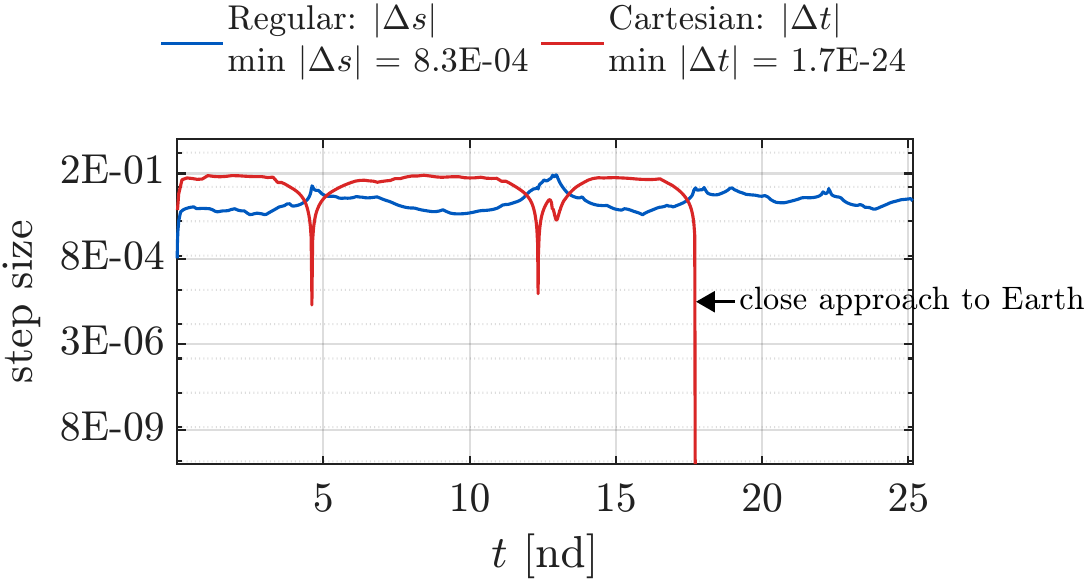}}
	\caption{\label{f:ER3BP-SRP-P1} SRP-perturbed Earth-Moon ER3BP, regularized vs.~Cartesian dynamics, with Earth singularity encounter and $t_f \approx 109$ days.
	The adaptive-step ODE solver (Matlab \texttt{ode89}) failed to complete the Cartesian propagation, not being able to meet the requested tolerance ($10^{-12}$) after reducing the step size below $2\times{10^{-24}}$.%
	}
\end{figure}

Next, \cref{f:ER3BP-SRP-P1} shows an example that involves collisions with Earth in the ER3BP perturbed by SRP (see \cref{sec:physical_system_definition} for SRP modeling).
As can be seen in \cref{t:initial-cartesian} and the $y$-$z$ projections in \cref{f:ER3BP-SRP-P1-reg-3D,f:ER3BP-SRP-P1-Car-3D}, this trajectory has non-zero $z$ component, and yet \cref{f:ER3BP-SRP-P1-reg-3D} demonstrates the successful removal of spatial collision singularities.
On the other hand, \cref{f:ER3BP-SRP-P1-Car-3D,f:ER3BP-SRP-P1-step} indicate that \texttt{ode89} failed to complete the Cartesian propagation for the requested tolerance, where the step size $\Delta t$ dropped below $2\times 10^{-24}$.
Similar to \cref{f:CR3BP-P2}, the regularized propagator step size $\Delta s$ remains about the same throughout the propagation, confirming the stable numerical integration.

\begin{figure}[tb]
\centering \subfigure[\label{f:ER3BP-SRP-P2-reg} Regularized propagation, $x$-$y$ projection.]
{\includegraphics[width=0.49\linewidth]{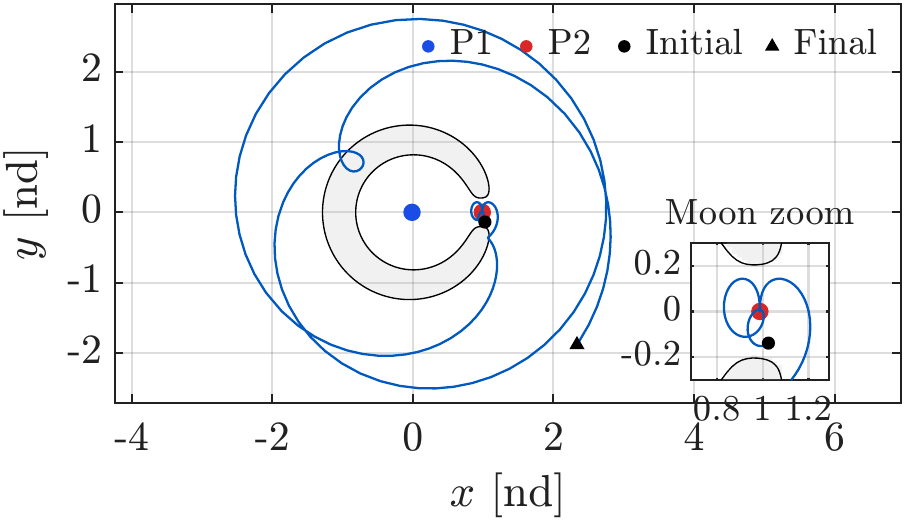}}
\centering \subfigure[\label{f:ER3BP-SRP-P2-Car} Cartesian propagation, $x$-$y$ projection.]
{\includegraphics[width=0.49\linewidth]{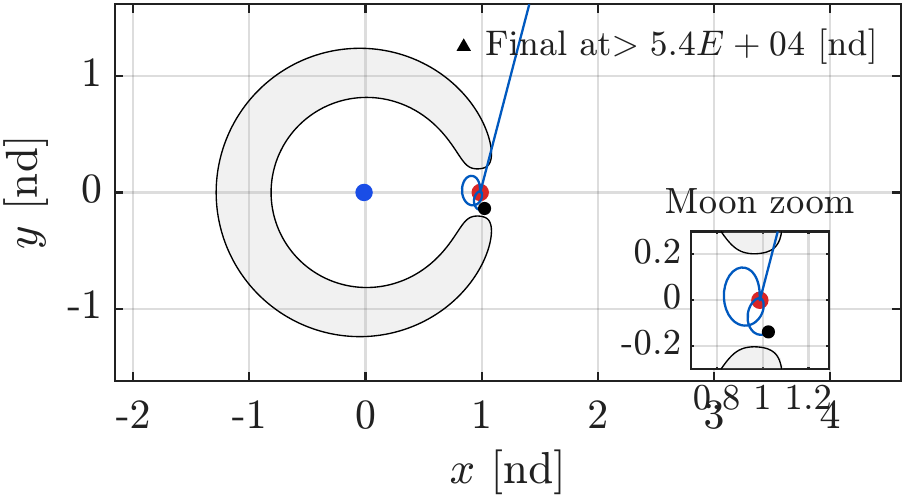}}
	\caption{\label{f:ER3BP-SRP-P2} SRP-perturbed Earth-Moon ER3BP, regularized vs.~Cartesian dynamics, with Moon singularity encounter and $t_f \approx 119$ days.
	Since the system is non-conservative, the particle can enter the shaded region, which is enclosed by the zero-velocity curve at the initial epoch.
	In Cartesian, the particle escape the system after the second Moon encounter, highlighting the inaccuracy caused by the singularity in the Cartesian dynamics.%
	}
\end{figure}

Lastly, \cref{f:ER3BP-SRP-P2} compares trajectories propagated using the regularized and Cartesian EOMs.
Similar to what is observed in \cref{f:CR3BP-P2}, \cref{f:ER3BP-SRP-P2-reg,f:ER3BP-SRP-P2-Car} display completely different trajectories, even though the trajectories are identical in the beginning (see the zoom-in windows).
In the Cartesian propagation, the second Moon encounter kicked out the particle from the system, reaching $\approx2.2\times10^{10}$ km within the same physical time as the regularized propagation.
Note that the shaded region here represents the region enclosed by the zero-velocity curve based on the initial energy;
since the system is non-conservative, this region does not serve as a forbidden region unlike the CR3BP considered in \cref{f:CR3BP-P2}.
As listed in \cref{t:comparison}, the regularized propagator achieved $>2200\times$ speedup compared to the Cartesian counterpart due to the ability to take larger step size in the independent variable throughout the propagation, as evidenced by the difference in the smallest step size.
This result reinforces the observation that the regularized dynamics offer significant computational advantages over the conventional formulation when the trajectory involves close approaches to gravitational bodies in the ER3BP.

\section{Conclusions}
\label{sec:conclusions}
This paper presents a formulation that globally (= simultaneously) regularizes the two singularities in the spatial elliptic restricted three-body problem (ER3BP).
The presented formulation builds on the foundation laid by Waldvogel and Arenstorf in 1960s and provides a unifying framework for global spatial regularization of the ER3BP.
Unlike the existing approaches,
this paper views the regularizing transformation as a composition of three conformal-like mappings in the quaternionic (= hyper-complex) space and leverages mathematical tools from Hamiltonian mechanics to provide a systematic procedure for deriving the globally regularizing transformation.
The Hamiltonian approach reveals the underlying structure of the regularized dynamics, naturally accommodates external acceleration, and offers multiple pathways for further extensions.
This paper proves the regularity of the derived transformation, Hamiltonian, and equations of motion.
It also clarifies that the resulting regularized dynamics are singular only when the particle goes to the infinity of the physical space, which has an important implication in numerical implementation.
Numerical simulations in the Earth-Moon system demonstrates the validity of the mathematical results as well as the numerical advantages of the regularized dynamics compared to the conventional Cartesian representation.

\appendix
\section{Proof of \cref{theorem:single-shot-WA}}
\label{proof:single-shot-WA}
\begin{proof}
Let us first show the coordinate map \cref{eq:mappingSpatial}.
Using \cref{eq:mappingCompositionMomenta}, substitution of \(\q{2}=i-2(z-i)^{-1}\) into \(\q{1}=\conj{\q{2}}i\q{2}\) gives
\begin{align}
(\bar z+i)(\q{1}-i)(z-i)
&=4i+2(z-i)-2(\bar z+i)
=2(z-\bar z).
\label{eq:single-shot-coordinate-identity}
\end{align}
Therefore, $(\q{1}-i)^{-1}=({1}/{2})(z-i)(z-\bar z)^{-1}(\bar z+i)$.
Substitution into \cref{eq:mobius-all-c} yields $q=({1}/{2})\left[i-(z-i)(z-\bar z)^{-1}(\bar z+i)\right]$, which proves \cref{eq:mappingSpatial}.

Next, derive the momenta map.
Combining the three cotangent lifts in \cref{eq:mappingCompositionMomenta} gives
\begin{align}
p =\frac{(\conj{\q{1}}+i)\conj{\q{2}}i\,\p{2}(\conj{\q{1}}+i)}{2\abs{\q{2}}^2}
\label{eq:p-temp}
\end{align}
\cref{eq:mobius-all-a}, \cref{eq:single-shot-coordinate-identity}, and the inverse of \cref{eq:mobius-all-c} respectively imply
\begin{align}
\begin{aligned}
\conj{\q{2}} &=-(\bar z+i)^{-1}(\bar z-i)i,
\qquad
\conj{\q{1}}+i =-2(\bar z+i)^{-1}(z-\bar z)(z-i)^{-1},
\\
\conj{\q{1}}+i & =\frac{\abs{\q{1}-i}^2}{2}(i-2q)
=\frac{\abs{\q{1}-i}^2}{2} (z-i)(z-\bar z)^{-1}(\bar z+i).
\label{eq:single-shot-intermediate-identities}
\end{aligned}
\end{align}
Using \cref{eq:single-shot-intermediate-identities} and \(\p{2}=(\bar z+i)\xi(\bar z+i)/2\) leads to
\begin{align}
(\conj{\q{1}}+i)\conj{\q{2}}i = \frac{\abs{\q{1}-i}^2}{2} (z-i)(z-\bar z)^{-1}(\bar z-i),\quad
\p{2}(\conj{\q{1}}+i) =-(\bar z+i)\xi(z-\bar z)(z-i)^{-1}
\label{eq:single-shot-momentum-identities}
\end{align}
Substituting \cref{eq:single-shot-momentum-identities} into \cref{eq:p-temp} yields
\begin{align}
p &=-\frac{\abs{\q{1}-i}^2}{4\abs{\q{2}}^2} (z-i)(z-\bar z)^{-1}(\bar z-i)(\bar z+i) \xi(z-\bar z)(z-i)^{-1}.
\label{eq:single-shot-momentum-pre}
\end{align}

Finally, the inverse of \cref{eq:mobius-all-c} and \(\abs{\q{1}}=\abs{\q{2}}^2\) give
$\abs{\q{1}-i}=\frac{2}{\abs{2q-i}}$ and $\abs{\q{2}}^2=\abs{\q{1}}=\frac{\abs{2q+i}}{\abs{2q-i}}$, respectively.
It then leads to $\frac{\abs{\q{1}-i}^2}{4\abs{\q{2}}^2}=\frac{1}{\abs{2q-i}\abs{2q+i}}$.
Substituting this relation into \cref{eq:single-shot-momentum-pre} gives \cref{eq:mappingSpatialMomenta}.
\end{proof}

\section{Proof of \cref{theorem:WA-transform-explicit}}
\label{proof:WA-transform-explicit}
\begin{proof}
Since \(z-\bar z\) is pure imaginary, $(z-\bar z)^{-1}= - {(z-\bar z)}/{(4\varrho^2)}$, where \cref{eq:WA-explicit-aux} is used.
Substitution into the coordinate map in \cref{eq:WA-transformation} and direct expansion give
\[
\begin{aligned}
q_1 = \frac{(\varrho^2+z_4^2+1)z_1}{4\varrho^2},\ 
q_2 = \frac{(\varrho^2+z_4^2-1)z_2+2z_4z_3}{4\varrho^2},\ 
q_3 = \frac{(\varrho^2+z_4^2-1)z_3-2z_4z_2}{4\varrho^2},\ 
q_4 =0
\end{aligned}
\]
which gives the coordinate map \cref{eq:coordinates-explicit}.

Now consider the momentum map.
Using \cref{eq:intermediate-terms} together with $(z-i)^{-1}={(\bar z+i)}/{(\chi^{(-)})}$ and $(z-\bar z)^{-1}=-{(z-\bar z)}/{(4\varrho^2)}$ reduces \cref{eq:mappingSpatialMomenta} to
\begin{align}
p
&=
\frac{(z-i)(z-\bar z)(\bar z-i)(\bar z+i)\,
\xi\,(z-\bar z)(\bar z+i)}
{\bigl[\chi^{(-)}\bigr]^2\chi^{(+)}}.
\label{eq:momenta-explicit-reduced}
\end{align}
Expanding the left and right quaternion products in \cref{eq:momenta-explicit-reduced} component-wise and collecting the coefficients of $\bm{\xi}$ gives
\[
\begin{bmatrix}
p_1 & p_2 & p_3
\end{bmatrix}^\top
=\frac{4}{\chi^{(-)}\chi^{(+)}}M(z)\bm\xi,
\]
where the entries of \(M\) are those listed in \cref{eq:momenta-explicit-coeff}.
It remains to verify that $p_4 = \Ree{p} $ vanishes.
Cycling the first two factors of \cref{eq:momenta-explicit-reduced} to the end of the product gives
\begin{align}
\begin{aligned}
\Ree(p)
&=
\frac{1}{[\chi^{(-)}]^2\chi^{(+)}}
\Ree\!\left[(\bar z-i)(\bar z+i)\xi
(z-\bar z)(\bar z+i)(z-i)(z-\bar z)\right]
\\
&= -\frac{4\varrho^2}{\chi^{(-)}\chi^{(+)}}
\Ree\left[(\bar z+i)\xi(\bar z-i)\right] \underset{\cref{eq:WA-admissibility}}{=} 0
\label{eq:p4-verification}
\end{aligned}
\end{align}
where the second equality uses \((\bar z+i)(z-i)=\chi^{(-)}\), \((z-\bar z)^2=-4\varrho^2\), and one final cyclic permutation inside \(\Ree(\cdot)\).
This completes the proof.
\end{proof}

\section{Proof of \cref{theorem:regularHamiltonian-explicit}}
\label{proof:regularHamiltonian-explicit}
\begin{proof}
Using \cref{eq:intermediate-terms} directly yields the first and third terms of \cref{eq:physical-terms-in-WA}:
\begin{align}
&\eta(q) = \abs{q-\frac{i}{2}}\abs{q+\frac{i}{2}} = \frac{\chi^{(-)}\chi^{(+)}}{16\varrho^2} = \eta_{\mathrm{WA}},
\label{eq:eta-WA-derivation}
\\
&\eta(q)V(q) = -(1-\mu)\abs{q-\frac{i}{2}}-\mu\abs{q+\frac{i}{2}} = U_{\mathrm{WA}},
\label{eq:potential-WA-derivation}
\\
&\frac{1}{2}\eta(q)\abs{p}^2
= \frac{1}{2}\varrho^2\abs{\xi}^2 \quad \text{(regularized kinetic energy)}
\label{eq:eta-WA-kinetic}
\end{align}

It remains to obtain the rotation term (the second term of \cref{eq:physical-terms-in-WA}). 
Since $q_2p_1-q_1p_2=\Ree(kqp)$, the momentum map in \cref{eq:WA-transformation} and \cref{eq:eta-WA-derivation} give
\begin{align}
J_{\mathrm{WA}}
= \eta(q)(q_2p_1-q_1p_2) 
= -\frac{1}{4}\Ree\!\left\{
kq(z-i)(z-\bar z)^{-1}(\bar z-i)(\bar z+i)
\xi(z-\bar z)(z-i)^{-1}\right\}
\label{eq:J-WA-pre}
\end{align}
Substituting the coordinate map for $q$ in \cref{eq:J-WA-pre}, and using $(\bar z+i)(z-i)=\chi^{(-)}$ and $(z+i)(\bar z-i)=\chi^{(+)}$, yields
\begin{align}
J_{\mathrm{WA}}
&= -\frac{1}{8\abs{z-\bar z}^2}
\Ree\!\left\{
k\left[z(\bar z-i)(\bar z+i)-i\chi^{(+)}\right]
\xi(z-\bar z)(\bar z+i)\right\}
\nonumber\\
&= -\frac{1}{32\varrho^2}
\Ree\!\left\{
k\left[z(\bar z-i)(\bar z+i)-i\chi^{(+)}\right]
\xi(z-\bar z)(\bar z+i)\right\}
\label{eq:eta-rotation}
\end{align}
Substituting \cref{eq:potential-WA-derivation,eq:eta-WA-kinetic,eq:eta-rotation} into \cref{eq:Hamiltonian-original} gives \cref{eq:regularHamiltonian}.
\end{proof}

\section*{Acknowledgments}
The author acknowledges the use of OpenAI's Codex to assist in simplifying the algebraic derivations of proofs, especially those involving quaternion algebra.
The author also used the same AI tool to translate some papers written in non-English languages in the process of literature review to make the novelty claim precise.

\bibliographystyle{siamplain}
\bibliography{../../../../../../utility/latex/ref/zotero/library}

\end{document}